\DeclareMathOperator{\MAX}{max}
\DeclareMathOperator{\degree}{deg}
\DeclareMathOperator{\Mat}{Mat}
\DeclareMathOperator{\Image}{im}
\DeclareMathOperator{\Rank}{rank}
\DeclareMathOperator{\Freeclass}{\text{FreeClass}}
\DeclareMathOperator{\image}{im}
\DeclareMathOperator{\kernel}{ker}
\DeclareMathOperator{\infimum}{inf}
\DeclareMathOperator{\WLOG}{WLOG}
\newtheorem{theorem}{Theorem}[section]
\newtheorem{lemma}{Lemma}[section]
\newtheorem{proposition}{Proposition}[section]
\newtheorem{corollary}{Corollary}[section]
\newtheorem{conjecture}{Conjecture}[section]
\theoremstyle{definition}
\newtheorem{definition}{Definition}[section]
\newtheorem{remark}{Remark}[section]
\newcommand{\RN}[1]{
  \textup{\uppercase\expandafter{\romannumeral#1}}%
}
\newcommand{\RowNumber  }{\mathcal{R}}
\newcommand{\ColNumber  }{\mathcal{C}}
\tikzset{mycolor/.style = {line width=1bp,color=#1}}%
\tikzset{myfillcolor/.style = {draw,fill=#1}}%
\NewDocumentCommand{\highlight}{O{blue!40} m m}{%
\draw[mycolor=#1] (#2.north west)rectangle (#3.south east);
}
\newcommand{\Homology}{\operatorname{H}}
\newcommand{\dimension}{\operatorname{dim}}
\newlength{\symsize}\setlength{\symsize}{20pt}
\newlength{\boardwidth}\setlength{\boardwidth}{10\symsize}
\begin{document}

\title{The rank 8 case of a conjecture on square-zero upper triangular matrices}

\author{BERR\.{I}N \c SENT\" URK  }

\address{Department of Mathematics, TED
University, Ankara, 06420, Turkey.}

\email{ berrin.senturk@tedu.edu.tr }

\thanks{The author was supported by the Fellowship Program for Abroad Studies 2219 by
the Scientific and Technological Research Council of Turkey (T\"{U}B\.{I}TAK)}

\subjclass[2020] {13Dxx, 55M35}

\keywords{rank conjecture, square-zero matrices, free flag}

\begin{abstract}
Let $A$ be the polynomial algebra in $r$ variables with coefficients in an algebraically closed field $k$.
 When the characteristic of $k$ is $2$, Carlsson
\cite{Carlsson1986} conjectured that any $\mathrm{dg}$-$A$-module that is free of rank $N$ as an $A$-module and whose homology is
nontrivial and finite dimensional as a $k$-vector space satisfies $N\geq 2^r$.
In this paper, we
examine a stronger conjecture concerning varieties of square-zero upper triangular
$N\times N$ matrices.
Stratifying these varieties via Borel orbits, we show that the
stronger conjecture holds when $N = 8$ without any restriction on the characteristic of $k$.
This result also verifies that if $X$ is a product of $3$ spheres of any dimensions, then
the elementary abelian $2$-group of rank $4$ cannot act freely on $X$.
\end{abstract}
\maketitle

\section{Introduction}
\label{section:Introduction}
The long-standing Rank Conjecture states that if $(\mathbb{Z}/p\mathbb{Z})^r$ acts freely and cellulary
 on a finite CW-complex $X$ that is homotopy equivalent to ${S^{n_1}\times \ldots \times S^{n_m}}$, then  $r\leq m$. There are many partial results in several special cases. In
the equidimensional case $n:=n_1=\ldots=n_m$, Carlsson \cite{Carlsson1982},
Browder \cite{Browder}, and Benson-Carlson
\cite{BensonCarlson} gave a proof under the assumption that the induced
action on homology is trivial. Without the homology assumption, the
equidimensional conjecture was proved by Conner \cite{Conner} for $m=2$,
Adem-Browder \cite{AdemBrowder} for $p\neq2$ or $n\neq 1,3,7$, and Yal\c{c}{\i}n \cite{EYgroupact} for $p=2$ and $n=1$. In the
non-equidimensional case, the
conjecture was verified by Smith \cite{Smith} for $m=1$, Heller \cite{Heller} for
$m=2$,  Carlsson \cite{Carlsson2} for $p=2$ and $r=3$, Refai \cite{Refai1} for $p=2$ and $r=4$,  Hanke \cite{Hanke} for $p$ large relative to the dimension of the product of spheres, and
Okutan-Yal\c{c}{\i}n \cite{OkutanYal} for products in which the average of the
dimensions
is sufficiently large compared to the differences between them. The general case for
$r\geq 5$ is
still open.

A more general conjecture, known as Carlsson's Rank Conjecture \cite[Conjecture~$\RN{1}.3$]{Carlsson1986} states that if $(\mathbb{Z}/p\mathbb{Z})^r$ acts freely on a finite nonempty CW-complex $X$, then
$\sum_i\dim_{\mathbb{F}_p} \Homology_i(X;\mathbb{Z}/p\mathbb{Z})$ is at least $2^r$. Carlsson also states an algebraic analogue of the conjecture  \cite[Conjecture~$\RN{2}.2$]{Carlsson1986}, which is even stronger:
 	If $C_*$ is a finite free $\overline{\mathbb{F}}_p(\mathbb{Z}/p\mathbb{Z})^r$-chain complex with non-zero homology, then $\dim_{\overline{\mathbb{F}}_p}	\Homology_*(C_*)\geq 2^r$.
 However, Iyengar-Walker \cite{Iyengar_Walker_2018} disproved the algebraic conjecture when $p\neq 2$ and $r\geq 8$. Even so, it remains open for $p=2$. Moreover, the Iyengar-Walker counterexamples cannot be realized topologically \cite{Marc}, so the topological version of Carlsson's Rank Conjecture is still open for all primes.

 Let $R$ be a graded ring. A pair $(M,\partial)$ is a \emph{differential graded $R$-module}, or simply \emph{$\mathrm{dg}$-$R$-module}, if $M$ is a graded right $R$-module and
$\partial$ is an $R$-linear endomorphism of $M$ of degree $-1$ that satisfies $\partial^2=0$. A $\mathrm{dg}$-$R$-module is \emph{free} if the underlying $R$-module is free.

 \indent When $k$ is an algebraically closed field of characteristic $2$, Carlsson in \cite{Carlssonbeta} and \cite{Carlsson1986} showed that the algebraic analogue of the conjecture is equivalent to a conjecture in commutative algebra:
 \begin{conjecture}\label{conjecturePoly}
 Let $k$ be an algebraically closed field and $A=k[y_1,\ldots,y_r]$ the polynomial
ring over $k$ in the variables $y_1,\ldots,y_r$ of degree $-1$. If $M$ is a free,
 finitely generated $\mathrm{dg}$-$A$-module, $\Homology_*(M)\neq 0$, and
 $\dim_k \Homology_*(M)< \infty$, then  $N:=\Rank_A M\geq 2^r$.
 \end{conjecture}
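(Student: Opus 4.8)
The plan is to establish the first open case, $N=8$. We may assume $M$ is minimal, so that $\partial$ has entries in $\mathfrak{m}=(y_{1},\dots,y_{r})$; a nonzero minimal square-zero complex of finitely generated graded free modules over a polynomial ring has nonzero homology, so restricting the surplus variables to $0$ carries a minimal counterexample over $k[y_{1},\dots,y_{r}]$ to one over $A:=k[y_{1},y_{2},y_{3},y_{4}]$, and it is enough to treat $r=4$. Since $8=2^{3}$, the statement to prove is then that there is no free finitely generated $\mathrm{dg}$-$A$-module $M$ of rank $8$ with $H_{*}(M)\neq 0$ and $\dim_{k}H_{*}(M)<\infty$. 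The next step is the translation to linear algebra that underlies the stronger conjecture of the title: choosing a homogeneous basis adapted to a free flag turns $\partial$ into an $8\times 8$ strictly upper triangular matrix $\Delta$ over $A$ with entries in $\mathfrak{m}$ and $\Delta^{2}=0$, and, since for such a complex having constant rank off the origin is equivalent to being exact off the origin, the hypothesis $\dim_{k}H_{*}(M)<\infty$ becomes the requirement that $\Delta$ have rank $4$ at every point of $\mathbb{A}^{4}\setminus\{0\}$. So it suffices to prove that no such matrix $\Delta$ exists.

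The main tool is a stratification of the variety $\mathcal{Z}$ of square-zero strictly upper triangular $8\times 8$ matrices by the orbits of the Borel group $B$ of invertible upper triangular matrices acting by conjugation; this action preserves both strict upper triangularity and the square-zero locus and is exactly the freedom of changing the free flag. Writing $\Delta=y_{1}A_{1}+\dots+y_{4}A_{4}+(\text{higher order})$, the relation $\Delta^{2}=0$ forces $A_{i}^{2}=0$ and $A_{i}A_{j}+A_{j}A_{i}=0$, and combining this with the generic-rank-$4$ condition reduces the problem to finitely many strata, on each of which $\Delta$ admits an explicit normal form indexed by combinatorial data that records, at a generic point, which columns of $\Delta$ are linearly independent and how the remaining columns are built from them. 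It then remains to eliminate each stratum.

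If $\Delta$ decomposes as a direct sum of two strictly smaller square-zero matrices, then $H_{*}(M)$ splits accordingly and each summand is a finite-length example over $A$ of size $<8$; this is impossible, since a summand of size $\le 3$ or of odd size can never have finite-length homology, while the even sizes $4$ and $6$ are ruled out by the already known cases $N<8$. Hence $\Delta$ must be indecomposable, and \textbf{the main difficulty} is precisely these indecomposable strata: there no reduction to smaller $N$ is available, and one must confront the requirement that the $5\times 5$ minors of $\Delta$ vanish simultaneously only at the origin — a condition of codimension $\ge 4$ — directly against the block shape of each normal form, showing that it cannot be met with only four variables. Carrying out this finite but intricate case analysis uniformly in the characteristic — the point being that $\Delta^{2}=0$ makes the $A_{i}$ anticommute when the characteristic of $k$ is not $2$ but commute when it is $2$, while $A_{i}^{2}=0$ holds in every characteristic — yields the characteristic-free statement, and hence the $N=8$ case.
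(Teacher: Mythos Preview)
First, a framing point: the statement you are addressing is a \emph{conjecture} in the paper; the paper does not prove it in general. What the paper actually proves is the $N=8$ case of the stronger matrix conjecture (Conjecture~\ref{Berrinconjecture}), from which the $r\le 3$ bound for rank-$8$ modules (Corollary~\ref{Corollary1}) follows. Your proposal is aimed at exactly this target, and at the strategic level it is aligned with the paper: pass to a minimal model, represent $\partial$ by a strictly upper triangular $8\times 8$ matrix with entries in $\mathfrak m$, use that finite length of homology forces rank $4$ off the origin, and stratify by Borel orbits to reduce to finitely many shapes.

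The genuine gap is that your argument stops precisely where the paper's work begins. The sentence ``Carrying out this finite but intricate case analysis \dots\ yields the characteristic-free statement'' is a placeholder for the entire content of Section~\ref{MainSection}. In the paper this analysis is not organised around your direct-sum/indecomposable dichotomy or the linear parts $A_i$; instead one identifies the $14$ maximal orbits in $\mathbf{RP}(8)$ under type~\RN{3} moves (reduced to $10$ by anti-diagonal symmetry), and for each one exhibits an explicit point $\gamma\in\mathbb P^{r-1}$ at which the rank drops. The tools that make each case go through are concrete: (i) the multivariate fundamental theorem of algebra (Theorem~\ref{thmPOLY}) to find common zeros of fewer than $r$ homogeneous forms; (ii) $\gcd$/unit factorisations in the UFD $S$, exploiting $D^2=0$ to force many entries to share factors; and (iii) for the short-flag types $(1,4,3)$, $(2,4,2)$, and $(1,3,3,1)$, the Avramov--Buchweitz--Iyengar bound $\operatorname{FreeClass}\ge r$ (Theorem~\ref{Avramov}). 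None of these appear in your outline.

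Two specific points where your sketch diverges and would not close the argument as written: the ``direct sum'' reduction you invoke is not the same as being in a non-maximal Borel orbit, so it does not by itself cut the list of shapes to a manageable set; and the (anti)commutation relations among the degree-one pieces $A_i$ that you extract from $\Delta^2=0$ play no role in the paper's proofs and do not obviously substitute for the $\gcd$-and-minor computations that actually force a rank drop. If you want your write-up to constitute a proof, you need to enumerate the maximal shapes and, for each, produce the specific $\gamma$ and rank estimate, as in Propositions~\ref{prop1}--\ref{prop10}.
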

  For $r\leq 3$, the above conjecture was verified  by Carlsson \cite{Carlsson2} when the characteristic of $k$ is $2$, by Avramov, Buchweitz, and Iyengar \cite{Avramov} for regular rings without any restriction on the characteristic of $k$. An analogue of the conjecture for the rational numbers was proved by Allday and Puppe \cite{AlldayPuppeEski}.
   Recently, VandeBogert and Walker \cite{KellerWalker} proved that the Total rank conjecture related to the Betti numbers of $M$ holds for the rings of characteristic two.

 In \cite{Berrin}, \"{U}nl\"{u} and the author  stated another conjecture from the perspective of algebraic geometry:
\begin{conjecture}\label{conjpaperRC}
Let $k$ be an algebraically closed field, $r$ a positive
integer, ${N=2n}$ an even positive integer, and $d:=(d_1,d_2,\dots ,d_N)$ an $N$-tuple of  nonincreasing integers. Then define $V(d,n)$ as the
weighted quasi-projective variety of rank $n$ square-zero upper triangular $N\times N$
matrices $(x_{ij})$ under the equivalence relation $({\lambda}^{\mathrm{d_i-d_j+1}}x_{ij})\sim (x_{ij})$ for all $\lambda$ in the unit group $k^{\ast}$.
Assume that there exists a
nonconstant morphism $\psi $ from the projective variety $\mathbb{P}^{r-1}_k$ to $V(d,n)$.
Then $N\geq 2^r$.
\end{conjecture}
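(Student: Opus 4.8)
The plan is to convert the existence of $\psi$ into a statement about exact complexes of sheaves on $\mathbb{P}^{r-1}_k$, reduce to a single Borel stratum, and then induct on $r$; I expect the decisive difficulty to be the factor-of-two gain per variable, which is exactly the point at which the unrestricted analogue of the conjecture is known to fail in odd characteristic.

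First I would linearize the hypothesis. A nonconstant morphism $\psi\colon \mathbb{P}^{r-1}_k \to V(d,n)$ is the same datum as a graded square-zero endomorphism $\partial$ of a free $A$-module $M$ of rank $N$ over $A=k[y_1,\dots,y_r]$, with generator degrees recorded by $d$, whose entries $p_{ij}$ are homogeneous of degree $d_i-d_j+1$ and whose rank is identically $n=N/2$ on all of $\mathbb{P}^{r-1}_k$. Constant maximal rank forces $\operatorname{im}\partial=\ker\partial$ fiberwise, so the associated complex of line-bundle sums $\widetilde M$ with differential $\partial$ is \emph{fiberwise exact} of total rank $N$ on $\mathbb{P}^{r-1}_k$. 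In these terms the target inequality $N\ge 2^r$ is precisely the projective, homogeneous incarnation of the total-rank phenomenon, and the nonconstancy of $\psi$, together with the strong rigidity of morphisms out of $\mathbb{P}^{r-1}_k$, is what forces the dependence on all $r$ coordinates that injects the exponent $r$.

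Next I would stratify $V(d,n)$ by orbits of the Borel subgroup $B$ of upper-triangular invertible matrices acting by conjugation, as in the $N=8$ argument. The rank-$n$ square-zero upper-triangular locus decomposes into finitely many $B$-orbits $O_\sigma$ indexed by the perfect matchings (fixed-point-free involutions) $\sigma$ of $\{1,\dots,N\}$ underlying such matrices, with the degree data $d_i-d_j+1$ governing how much each entry is allowed to vary. Since $\mathbb{P}^{r-1}_k$ is irreducible, its image meets a unique dense stratum, so $\psi$ factors generically through one orbit closure $\overline{O_\sigma}$; the problem becomes that of bounding, matching by matching, how richly a family parameterized by $\mathbb{P}^{r-1}_k$ can move inside a single $\overline{O_\sigma}$ under the prescribed degrees. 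For fixed small $N$ there are finitely many matchings and each can be treated explicitly; the obstruction to a uniform statement is that the number of matchings is $(N-1)!!$, so the case analysis cannot be carried out by hand for all $N$ at once.

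The structural engine I would propose for all $N$ is induction on $r$ via restriction along a generic hyperplane $\mathbb{P}^{r-2}_k\hookrightarrow\mathbb{P}^{r-1}_k$: the restricted differential is again fiberwise exact of constant rank and lives in the same family with one fewer variable, so whenever it remains nonconstant the inductive hypothesis yields $N\ge 2^{r-1}$. The genuinely hard step, and the one I expect to be the main obstacle, is upgrading $2^{r-1}$ to $2^{r}$, i.e.\ capturing the full factor of two contributed by the eliminated variable; this is the same doubling that makes the Koszul differential of rank $2^r$ extremal and that underlies every known approach to the rank and total-rank conjectures. In characteristic different from $2$ this doubling can be produced by a tensor-square or $\lambda$-operation construction that inverts $2$, but in that very range Iyengar and Walker have shown the unrestricted dg-analogue to be \emph{false} for $r\ge 8$; consequently any proof of the conjecture as stated must extract the doubling from the extra rigidity that distinguishes $V(d,n)$ from an arbitrary free complex, namely homogeneity, upper-triangularity, and the exact degrees $d_i-d_j+1$, rather than from a characteristic-dependent splitting. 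Pinning down such a characteristic-free doubling, for instance by showing that a generic hyperplane restriction forces the governing matching $\sigma$ to split into two independent sub-matchings each carrying a full $2^{r-1}$ of rank, or by producing a secondary operation attached to the periodicity $\partial^2=0$, is where the essential work lies; absent such a mechanism one should expect the argument to close only in characteristic $2$, where the extremal example is the Koszul differential itself.
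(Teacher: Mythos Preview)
The statement you attempt to prove is not a theorem in the paper; it is stated there as Conjecture~\ref{conjpaperRC} and remains open. The paper's contribution is to verify the closely related (and stronger) Conjecture~\ref{Berrinconjecture} only in the single case $N=8$, by an explicit case analysis over the maximal Borel strata of $\mathbf{RP}(8)$. There is therefore no proof in the paper against which to compare your proposal.

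Your proposal is a strategy, not a proof, and you have yourself located the gap. The translation of $\psi$ into a homogeneous square-zero differential on a free module, and the passage to a single Borel orbit closure via irreducibility of $\mathbb{P}^{r-1}_k$, are both correct and constitute exactly the framework the paper uses for its $N=8$ argument. But your hyperplane-restriction induction yields at best $N\ge 2^{r-1}$, and the ``doubling'' from $2^{r-1}$ to $2^r$ that you flag as the hard step is precisely the content of the conjecture; you offer no mechanism for it beyond the hope that homogeneity, upper-triangularity, and the prescribed degrees supply enough rigidity. They do not in general: as the paper itself records, the Iyengar--Walker construction produces explicit counterexamples to Conjecture~\ref{Berrinconjecture} in characteristic $\neq 2$ at $r=8$, and since Conjecture~\ref{conjpaperRC} implies Conjecture~\ref{conjecturePoly}, no characteristic-free argument of the shape you sketch can possibly close. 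Your final sentence already concedes this. The proposal should be read as a diagnosis of where the difficulty lies rather than a proof; the missing step is the entire conjecture.
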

We proved that Conjecture \ref{conjpaperRC} implies Conjecture \ref{conjecturePoly} in \cite[Theorem~$1$]{Berrin}. The kernel of the idea came from Carlsson's work in \cite{Carlssonbeta} and \cite{Carlsson2}. The key point is that every free $\mathrm{dg}$-$A$-module $(M,\partial)$ is quasi-isomorphic to a free $\mathrm{dg}$-$A$-module $(\tilde{M},\tilde{\partial})$ such that the differential of $\tilde{M}\otimes_A k$ is the zero map. This module $\tilde{M}$ is called a minimal module. Let $N$ be the rank of $\tilde{M}$ over $A$. Then there exists a homogeneous basis for $\tilde{M}$ so that the boundary $\tilde{\partial}$ can be represented by an upper triangular $N\times N$ matrix whose entries are homogeneous polynomials in $A$. If $\mathbf{m}$ is any maximal ideal of $A$ other than $(y_1,\ldots,y_r)$, then $\Homology_*(M\otimes_A A/\mathbf{m})=0$ since $\dim_k \Homology_*(M)< \infty$ (see \cite[$\S 1$ Proposition~$8$]{Carlsson2}). Moreover, for any finite-dimensional module $(M,\partial)$ over $k$, $\Homology_*(M)=0$  if and only if $\dim_kM=2 \Rank_k \partial$. To work in a uniform setting, we define a new polynomial ring $S$ by replacing our indeterminates with $x_i$ such that $\degree(x_i)=1$. Because of the differential of $M$, we focus on square-zero strictly upper triangular matrices whose entries are homogeneous polynomials $p_{ij}$'s in $S$ of degree $d_i-d_j+1$.

In our previous work \cite{Berrin}, some of the results  obtained by  the computer algebra program GAP for small dimensional cases of the conjecture led us to extend the conjecture for matrices of a certain form: Those with $\ColNumber$ leading zero columns and $\RowNumber$ ending zero rows. Hence the most general conjecture states that if the value of $(p_{ij})$ at every point in the image of $\psi $ is $0$ whenever $i\geq N-\RowNumber  +1$ or $j\leq \ColNumber  $, then $N\geq 2^{r-1}(\RowNumber  + \ColNumber )$. Since we are concerned only with strictly upper triangular matrices, we have $\RowNumber\geq 1$ and $\ColNumber\geq 1$. Clearly, in this situation we have $N\geq 2^r$.

Now suppose that the morphism $\psi$ in the above conjecture is represented by the matrix $\mathbf{\mathbf{\mathbf{D}}}$, so that $\mathbf{\mathbf{D}}$ is
 given by $p_{ij}$ coordinate-wise. We may also consider $\mathbf{D}$ as a differential morphism $\mathbf{D}:S^N\rightarrow S^N$ of degree $1$. Define
$\Homology(\mathbf{D})=\kernel(\mathbf{D})/ \Image(\mathbf{D})$. In \cite[Conjecture~$3$]{Berrin}, we considered a more general conjecture in terms of matrices. In this paper, we add the condition of finite and non-zero homology to the conjecture in order to use Theorem \ref{Theoremrandl}, and assert the following conjecture:

\begin{conjecture}\label{Berrinconjecture}
Let $k$ be an algebraically closed field and $S:=k[x_1,\ldots, x_r]$ the polynomial ring in $r$ variables with $\degree(x_i)=1$. Assume that $n$, $r$, $\RowNumber$, $\ColNumber$ are positive integers, $N:=2n$, and
$\mathbf{\mathbf{D}}=( p_{ij})\in \Mat_{N\times N}(S)$. If
\begin{enumerate}[label=\arabic*)]
\item $\mathbf{D}$ is strictly upper triangular,
\item $\mathbf{D}^2=0$,
\item $0< \dimension_k \Homology(\mathbf{D})< \infty$,
\item for all $i$ and $j$, we have $p_{ij}(0,\ldots,0)=0$ (i.e., each constant term is $0$),
\item for all $(a_1,\ldots,a_r)\in k^r-\{(0,\ldots,0)\}$, we have $\Rank(\mathbf{D}(a_1,\ldots,a_r))=n$,
\item there exists an $N$-tuple of nonincreasing integers $(d_1,\ldots,d_{N})$ such that for all $i$ and $j$, $p_{ij}$ is a homogeneous polynomial in $S$ of degree $d_i-d_j+1$,
\item when  $j\leq \ColNumber$  or  $i\geq N-\RowNumber +1$, we have $p_{ij}=0$,
\end{enumerate}
then $N\geq 2^{r-1}\left( \RowNumber + \ColNumber \right)$.
\end{conjecture}

When $N<8$, \"{U}nl\"{u} and the author have already proved Conjecture $3$ (and thus Conjecture \ref{Berrinconjecture}) in \cite[Theorem~$2$]{Berrin}.
Also, we verified the conjecture for $r\leq 2$ in \cite[Theorem~$6$]{Berrin}. However,
Iyengar and Walker's construction in \cite[Proposition~$2.1$]{Iyengar_Walker_2018} was used in \cite[Example~$3.2.4$]{berrinPhD} to form
an explicit counterexample to Conjecture \ref{Berrinconjecture} when the characteristic of $k$ is odd and $r=8$. Moreover, for any characteristic of $k$, we have a counterexample when $N=12$. If we extend the idea in \cite[Example~$0.4$]{Daniel} for $r=3$, we have the $12 \times 12$ matrix $\mathbf{D}$ with $\RowNumber=\ColNumber=2$, so the conjecture fails.

We state a new conjecture implied by Conjecture \ref{Berrinconjecture} in order to eliminate the counterexample that occurs when $r=3$ and $N=12$:
\begin{conjecture}\label{ENSONBerrinconjecture}
Let $k$ be an algebraically closed field and $S:=k[x_1,\ldots, x_r]$ the polynomial ring in $r$ variables with $\degree(x_i)=1$. Assume that $n$, $r$, $\RowNumber$, $\ColNumber$ are positive integers, $N:=2n$, and
$\mathbf{\mathbf{D}}=( p_{ij})\in \Mat_{N\times N}(S)$. If
\begin{enumerate}[label=\arabic*)]
\item $\mathbf{D}$ is strictly upper triangular,
\item $\mathbf{D}^2=0$,
\item $0< \dimension_k \Homology(\mathbf{D})< \infty$
\item for all $i$ and $j$, we have $p_{ij}(0,\ldots,0)=0$ (i.e., each constant term is $0$),
\item for all $(a_1,\ldots,a_r)\in k^r-\{(0,\ldots,0)\}$, we have $\Rank(\mathbf{D}(a_1,\ldots,a_r))=n$,
\item there exists an $N$-tuple of nonincreasing integers $(d_1,\ldots,d_{N})$ such that for all $i$ and $j$, $p_{ij}$ is a homogeneous polynomial in $S$ of degree $d_i-d_j+1$,
\item when  $j\leq \ColNumber$  or  $i\geq N-\RowNumber +1$, we have $p_{ij}=0$,
\end{enumerate}
then $N\geq 2^{r-1}\left(\MAX(\RowNumber , \ColNumber)+1 \right)$.
\end{conjecture}
The following Remark is given by the result in \cite[Lemma~$3.2.2$]{berrinPhD} obtained by \cite[Theorem~$6.5.7$]{HoffmanKunze} and a theorem of McCoy as stated in \cite{Shapiro}.
\begin{remark}\label{BerrinENsonRemark}
Let $k$ be an algebraically closed field, $A$ the polynomial ring in $r$ variables with coefficients in $k$. Suppose that $(M, \partial)$ is a free $\mathrm{dg}$-$A$-module with $0< \dimension_k \Homology(M)< \infty$. Then there exists a free $\mathrm{dg}$-$A$-module $(\tilde{M},\tilde{\partial})$ such that \begin{enumerate}[label=\arabic*)]
\item $0< \dimension_k \Homology(\tilde{M})< \infty$,
\item $(\tilde{M},\tilde{\partial})$ admits a free flag decomposition (see Definition \ref{deffreeflag}),
\item $\Rank_A(\tilde{M})\leq \Rank_A(M)$,
\item $(\tilde{M},\tilde{\partial})$ is minimal.
\end{enumerate}
\end{remark}
Note that by Remark \ref{BerrinENsonRemark}, Conjecture \ref{ENSONBerrinconjecture} implies Conjecture \ref{conjecturePoly}.

The main result of this paper holds without any restriction on the characteristic of the ground field $k$.
\begin{theorem}\label{Mainresulttheorem}$\mathrm{[Theorem~ \ref{MainTheorem}]}$
Conjecture \ref{Berrinconjecture} holds for $N=8$.
\end{theorem}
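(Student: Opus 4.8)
The plan is to fix $N=8$, hence $n=4$, and to show that the inequality $8\geq 2^{r-1}(\RowNumber+\ColNumber)$ can fail only in situations where a matrix $D$ satisfying (1)--(6) cannot exist. Since $\RowNumber,\ColNumber\geq 1$ we always have $2^{r-1}(\RowNumber+\ColNumber)\geq 2^{r}$, with equality exactly when $\RowNumber=\ColNumber=1$; the inequality is automatic for $r\leq 1$, and for $r\leq 2$ it is \cite[Theorem~6]{Berrin}. Substituting $x_5=\dots=x_r=0$ carries a hypothetical $D$ over $k[x_1,\dots,x_r]$ with $r\geq 5$ to one over $k[x_1,x_2,x_3,x_4]$: conditions (1)--(3),(5),(6) survive verbatim, and (4) survives because $(a_1,\dots,a_4,0,\dots,0)$ is a nonzero point of $k^{r}$. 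Moreover, when $\RowNumber=\ColNumber=1$ condition (6) is implied by (1), so for all $r\geq 4$ it suffices to prove the single statement: \emph{no $D\in\Mat_{8\times 8}(k[x_1,x_2,x_3,x_4])$ satisfies (1)--(5)}. What then remains are the finitely many cases $r=3$ with $\RowNumber+\ColNumber\geq 3$ (the boundary case $r=3$, $\RowNumber=\ColNumber=1$ being vacuous), which I would treat by the same method, the analysis being considerably simplified by the large forced zero blocks -- and, when $\RowNumber+\ColNumber$ is large, by an outright determinant argument: e.g.\ $\RowNumber=\ColNumber=4$ forces $D=\left(\begin{smallmatrix}0&B\\0&0\end{smallmatrix}\right)$ with $\det B$ a nonvanishing homogeneous polynomial of positive degree $\sum_{i\leq 4}d_i-\sum_{i\geq 5}d_i+4>0$ on $\bbP^{r-1}$, which is absurd for $r\geq 2$.

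For the main statement I pass to the affine variety $V\subseteq\mathfrak n_8$ of rank-$4$ square-zero matrices inside the space $\mathfrak n_8$ of strictly upper triangular $8\times 8$ matrices over $k$. The Borel subgroup $B\leq\GL_8$ of invertible upper triangular matrices acts on $V$ by conjugation with only finitely many orbits; since the rank is exactly $4=8/2$, these orbits $\mathcal O_\beta$ are indexed by the $105$ link patterns with four arcs, i.e.\ the perfect matchings $\beta$ of $\{1,\dots,8\}$, and the orbit closures $\overline{\mathcal O_\beta}$ are cut out inside $V$ by the lower-semicontinuous south-west rank functions $D\mapsto\Rank$ of the submatrix of $D$ on rows $i,\dots,8$ and columns $1,\dots,j$. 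Each $\mathcal O_\beta$ is also stable under the weighted scaling $\lambda\cdot X=\lambda\,g(\lambda)Xg(\lambda)^{-1}$, $g(\lambda)=\operatorname{diag}(\lambda^{d_1},\dots,\lambda^{d_8})$, since scaling does not change a link pattern. Now a $D$ as in (1)--(5) over $k[x_1,\dots,x_4]$ gives, by (1),(2),(4), a morphism $a\mapsto D(a)$ from $k^4\setminus\{0\}$ into $V$ which, by (5), is equivariant for the above scaling; its image is irreducible, so it meets a single orbit $\mathcal O_\beta$ in a dense open set and therefore lies in the single orbit closure $\overline{\mathcal O_\beta}$ (equivalently: $\psi$ lands in the closure of one stratum of the induced stratification of $V(d,n)$). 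The problem thus becomes: for every perfect matching $\beta$ of $\{1,\dots,8\}$, rule out such a $D$ with image in $\overline{\mathcal O_\beta}$.

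The bulk of the $\beta$'s fall to a descent argument. If $\beta$ is incompatible with the zero pattern prescribed by (6), or contains a short arc $\{i,i+1\}$, or decomposes as a non-crossing concatenation of link patterns on a partition of $\{1,\dots,8\}$ into a lower and an upper block, then -- possibly after composing with the natural projection relating $\overline{\mathcal O_\beta}$ to the orbit closure of the reduced pattern -- one extracts from $D$ a matrix of size $N'<8$ again satisfying (1)--(6) over $k[x_1,\dots,x_4]$ for suitably inherited grading data $(d_i)$, $\RowNumber$, $\ColNumber$; since the $N<8$ case \cite[Theorem~2]{Berrin} applied with $r=4$ would force $N'\geq 2^{3}\cdot 2=16$, this is a contradiction. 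What is left is the short list of \emph{indecomposable} perfect matchings of $\{1,\dots,8\}$ -- typified by the maximally crossing one $\{\{1,5\},\{2,6\},\{3,7\},\{4,8\}\}$, together with a handful of others (fewer still after the reversal symmetry $i\mapsto 9-i$) -- for which I would argue directly: by (4), $\ker D=\operatorname{im}D$ is a rank-$4$ subbundle $E$ of $\mathcal O_{\bbP^3}^{\oplus 8}$ sitting in a $2$-periodic exact complex of vector bundles that is, in addition, compatible with the full flag (the ``free flag''), with ambient bundle the split bundle $\bigoplus_i\mathcal O(d_i)$ whose twists are pinned down by $\beta$ and the strictly upper triangular shape; comparing Chern classes -- equivalently, using the rigidity of bundles on $\bbP^3$ -- then corners the entries $p_{ij}$ into a configuration incompatible with $D^2=0$ or with the rank being $4$ at every point. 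I expect this final step, the hand analysis of the indecomposable patterns and above all of the maximally crossing one, to be the real obstacle; everything preceding it is book-keeping, the reduction moves, or a mechanical appeal to the $N<8$ case. The stated consequence for free actions on a product of three spheres then follows by feeding Theorem~\ref{Mainresulttheorem} into the chain of implications of \cite{Berrin} and \cite{Carlsson1986}.
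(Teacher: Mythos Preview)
Your high-level framework---stratify by Borel orbits (link patterns), reduce $r\geq 5$ to $r=4$, and analyse case by case---is the paper's. But the two shortcuts you rely on to shrink the case list are where the proposal breaks down.

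The descent argument does not work as stated. If $\beta$ contains the short arc $\{1,2\}$ and the image of $D$ meets $\mathcal O_\beta$ densely, this only says $p_{12}$ is not identically zero; it does not let you extract a $6\times 6$ matrix satisfying (1)--(6). Any conjugation using $p_{12}$ to clear the rest of row $1$ and column $2$ has \emph{rational} entries and destroys the homogeneity (5) and the everywhere-rank-$4$ condition (4). Likewise, when $\beta$ splits non-crossingly at position $m$, the orbit-closure inequalities give only $\Rank D_1\leq m/2$ for the top-left block, not equality at every point---the off-block $D_{12}$ can compensate---so $D_1$ need not satisfy (4) for $N'=m$, and the appeal to the $N<8$ theorem collapses. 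The paper attempts no such reduction: it keeps all $14$ maximal patterns under type~$\RN{3}$ moves (ten after the anti-diagonal symmetry) and treats each individually in Propositions~\ref{prop1}--\ref{prop10}.

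The endgame you sketch for the ``indecomposable'' patterns is also not a proof. The paper's actual arguments are concrete polynomial algebra: for each pattern one writes $D$ explicitly, uses $D^2=0$ to factor pairs of entries through their $\gcd$'s, and then invokes Theorem~\ref{thmPOLY} (any $s<r$ nonconstant homogeneous polynomials have a common zero in $\bbP^{r-1}$) to exhibit $\gamma$ with $\Rank D(\gamma)\leq 3$. A few cases---types $(1,4,3)$, $(2,4,2)$, and optionally $(1,3,3,1)$---are instead dispatched by the Avramov--Buchweitz--Iyengar inequality $\Freeclass_S F\geq r$ (Theorem~\ref{Theoremrandl}), an ingredient you do not mention. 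None of this is a Chern-class comparison, and the pattern you single out, $(15)(26)(37)(48)$, is the \emph{minimum} of the type~$\RN{3}$ order and never appears among the cases to be checked; the genuinely delicate ones are $(12)(34)(56)(78)$ (type $(1,2,2,2,1)$, forcing $r<4$) and $(18)(23)(45)(67)$ (type $(2,2,2,2)$, forcing $r<3$), each of which demands its own tailored rank-drop argument.
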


The proof of Conjecture \ref{Berrinconjecture} for $N=8$ also works for Conjecture \ref{ENSONBerrinconjecture}. The essential tool in the proof of Theorem \ref{Mainresulttheorem} is the stratification of the varieties of square-zero upper triangular matrices obtained by considering the conjugation action of a Borel subgroup of $GL_N(k)$. As in our previous work \cite[Section~$3.4$]{Berrin}, this stratification can be applied to the subvarieties that contain the image of the morphism defined by the matrix $\mathbf{D}$ in Conjecture \ref{Berrinconjecture}.

Note that the result ${N=8\geq 2^{r-1}\left(\RowNumber + \ColNumber \right) \geq 2^r}$ means that $r\geq 4$ implies $N \neq 8$. Together with the previous result that Conjecture \ref{Berrinconjecture} holds for $N<8$, one obtains that $r\geq 4$ implies $N>8$.  Now, consider a weaker version of Conjecture \ref{conjecturePoly}
for $k=\overline{\mathbb{F}}_2$, that is, if $M$ is a free,
 finitely generated $\mathrm{dg}$-$A$-module with $0<\dim_k \Homology_*(M)<\infty$, then  $N:=\Rank_A M\geq 2^{r-1}+1$. Note that by the K\"{u}nneth Formula even the weaker version of
 Conjecture \ref{conjecturePoly} implies Rank Conjecture for the $p=2$ case.
 \begin{corollary}\label{Corollary1}
Let $k$ be an algebraically closed field of characteristic $2$ and $A=k[y_1,\ldots,y_r]$. If  $r\geq 4$, and  $M$ is a free, finitely generated $dg$-$A$-module with
 ${0< \dim_k \Homology_*(M)<\infty}$, then  $N:=\Rank_A M>8$.
\end{corollary}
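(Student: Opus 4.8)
The plan is to deduce Corollary \ref{Corollary1} directly from Theorem \ref{Mainresulttheorem} together with the reduction already recalled in the introduction. First I would recall from the discussion following Conjecture \ref{conjpaperRC} that any free, finitely generated $\mathrm{dg}$-$A$-module $(M,\partial)$ with $0<\dim_k H_*(M)<\infty$ is quasi-isomorphic to one $(\tilde M,\tilde\partial)$ whose reduced differential vanishes, so that in a suitable homogeneous basis $\tilde\partial$ is given by a strictly upper triangular $N\times N$ matrix $D$ over the degree-shifted polynomial ring $S=k[x_1,\dots,x_r]$, with $N=\dim_A M=\dim_A\tilde M$; moreover $\dim_k H_*(\tilde M)<\infty$ forces, via \cite[$\S 1$ Proposition~$8$]{Carlsson2}, that $D$ has full rank $n=N/2$ at every nonzero point of $k^r$, and nonvanishing homology forces $N$ to be even. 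Thus $D$ satisfies conditions (1)--(5) of Conjecture \ref{Berrinconjecture} with $\RowNumber=\ColNumber=1$ (condition (6) being automatic for a strictly upper triangular matrix with those parameters).

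Next I would argue by contradiction: suppose $r\ge 4$ but $N\le 8$. Since $N$ is even and nonzero, $N\in\{2,4,6,8\}$. For $N<8$ the matrix $D$ would contradict \cite[Theorem~$2$]{Berrin}, which already establishes Conjecture \ref{Berrinconjecture} in that range; for $N=8$ it would contradict Theorem \ref{Mainresulttheorem}, since the conclusion $N\ge 2^{r-1}(\RowNumber+\ColNumber)=2^r\ge 16$ fails. Either way we reach a contradiction, so $N>8$, which is exactly the assertion of the corollary. (I would also note that the hypothesis $\operatorname{char}k=2$ is needed only so that the equivalence with Carlsson's algebraic conjecture and the reduction to upper triangular matrices go through in the stated form; Theorem \ref{Mainresulttheorem} itself is characteristic-free.)

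There is essentially no obstacle here beyond correctly invoking the reduction: the only point requiring a little care is checking that the parameters $\RowNumber=\ColNumber=1$ are the right ones and that condition (6) is vacuous in that case, and that the degree data $(d_1,\dots,d_N)$ produced by the basis change of \cite[Theorem~$1$]{Berrin} indeed satisfies the nonincreasing hypothesis (5) — but this is precisely what was recorded in the passage following Conjecture \ref{conjpaperRC}. So the proof is a one-paragraph citation-chase once Theorem \ref{Mainresulttheorem} is in hand; I would write it as: apply the reduction to get $D$, apply Theorem \ref{Mainresulttheorem} (or \cite[Theorem~$2$]{Berrin} in the lower-rank cases), obtain $N\ge 2^r\ge 16$ unless $N>8$ already, and conclude. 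If I wanted to be slightly more self-contained I might instead phrase it purely as: Theorem \ref{Mainresulttheorem} gives $N=8\Rightarrow 8\ge 2^r\Rightarrow r\le 3$, so $r\ge 4\Rightarrow N\ne 8$, and combined with $N<8$ being excluded by the lower-rank cases and $N$ even, we get $N>8$.
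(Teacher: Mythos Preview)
Your argument is correct and is exactly the route the paper takes: the discussion immediately preceding Corollary~\ref{Corollary1} derives it from Theorem~\ref{Mainresulttheorem} (and \cite[Theorem~$2$]{Berrin} for $N<8$) via the reduction of \cite[Theorem~$1$]{Berrin}, giving $N\ge 2^{r-1}(\RowNumber+\ColNumber)\ge 2^r\ge 16$ when $r\ge 4$. The paper also records a second, more economical derivation in Remark~\ref{remarkProp5}, which bypasses most of Theorem~\ref{Mainresulttheorem} by combining the free-flag bounds of Theorem~\ref{Theoremrandl} and Lemma~\ref{lemmaAvramov} with Proposition~\ref{prop5} alone.
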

In particular, this result proves that the Rank Conjecture is true for product of three spheres of any dimensions:
\begin{corollary}\label{Corollary2}
Let $X$ be a finite CW-complex homotopy equivalent to
 $ S^{n_1}\times S^{n_2}\times \ldots \times S^{n_m}$. If $m\leq 3$, then $(\mathbb{Z}/2)^r$  cannot act freely and cellularly on $X$ for $r\geq 4$.
\end{corollary}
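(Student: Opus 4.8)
The plan is to deduce this purely topological statement from the algebraic Corollary~\ref{Corollary1} by invoking Carlsson's passage from free actions to free $\mathrm{dg}$-$A$-modules, together with the K\"unneth formula. I would argue by contraposition: fixing $k=\overline{\mathbb{F}}_2$, it suffices to show that if $(\mathbb{Z}/2)^r$ acts freely and cellularly on a finite complex $X\simeq S^{n_1}\times\cdots\times S^{n_m}$ with $r\geq 4$, then $m\geq 4$, so that the hypothesis $m\leq 3$ excludes any such action.

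First I would realize the action algebraically. Since the action is free and cellular and $X$ is finite, the cellular chain complex $C_*=C_*(X;k)$ is a finite complex of finitely generated free modules over the group algebra $kG$, where $G=(\mathbb{Z}/2)^r$, and $\Homology_*(C_*)=\Homology_*(X;k)$ is nonzero (it contains $\Homology_0$). Freeness of the action also gives $EG\times_G X\simeq X/G$, so the equivariant cohomology $\Homology^*_G(X)\cong \Homology^*(X/G;k)$ is finite dimensional over $k$ and nonzero. Following Carlsson \cite{Carlssonbeta,Carlsson1986}, whose equivalence between this group-algebra formulation and the commutative-algebra formulation of Conjecture~\ref{conjecturePoly} is precisely the mechanism I would exploit, this data converts into a free, finitely generated $\mathrm{dg}$-$A$-module $(M,\partial)$ over $A=k[y_1,\ldots,y_r]$ with $0<\dim_k \Homology_*(M)<\infty$ and, crucially, $\dim_A M=\dim_k \Homology_*(X;k)$. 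Concretely one may take $M=A\otimes_k \Homology^*(X;k)$ with the differential assembled from the higher differentials of the spectral sequence of the fibration $X\to EG\times_G X\to BG$, whose finite-dimensional, nonzero abutment $\Homology^*_G(X)$ controls the finiteness and nonvanishing of $\Homology_*(M)$.

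It then remains to feed this module into Corollary~\ref{Corollary1}. The K\"unneth formula computes $\dim_k \Homology_*(X;k)=\prod_{i=1}^m \dim_k \Homology_*(S^{n_i};k)=2^m$, so $\dim_A M=2^m$. For $r\geq 4$, Corollary~\ref{Corollary1} forces $\dim_A M>8$, whence $2^m>8=2^3$ and therefore $m\geq 4$; by contraposition this is exactly the assertion that $(\mathbb{Z}/2)^r$ cannot act freely and cellularly when $m\leq 3$ and $r\geq 4$. The only genuinely delicate point is the algebraic reduction of the second step: one must ensure that Carlsson's construction yields a single free $\mathrm{dg}$-$A$-module of rank exactly $\dim_k \Homology_*(X;k)$ over $A$ whose homology captures $\Homology^*_G(X)$, so that the finiteness and nonvanishing hypotheses of Corollary~\ref{Corollary1} are met and the count $\dim_A M=2^m$ is exact. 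Once this identification is in place the numerical conclusion is immediate, and I would note that passing from $\mathbb{F}_2$ to the algebraically closed field $\overline{\mathbb{F}}_2$ changes none of the relevant dimensions.
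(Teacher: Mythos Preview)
Your proposal is correct and follows essentially the same route as the paper: the paper does not write out an explicit proof of Corollary~\ref{Corollary2} but derives it from Corollary~\ref{Corollary1} via Carlsson's passage from free $(\mathbb{Z}/2)^r$-actions to free $\mathrm{dg}$-$A$-modules of rank $\dim_k\Homology_*(X;k)$ together with the K\"unneth computation $\dim_k\Homology_*(X;k)=2^m$, exactly as you outline. Your identification of the one delicate step---that Carlsson's minimal model has rank precisely $\dim_k\Homology_*(X;k)$ and nonzero finite-dimensional homology---is apt, and is handled by the cited references \cite{Carlssonbeta,Carlsson1986}.
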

Therefore, Theorem \ref{Mainresulttheorem} gives an alternative proof of the result already given by Refai in \cite[Theorem~$4.5$]{Refai1}.

A simplifying observation is:
\begin{remark}\label{remarkvariable}
Let $\mathbf{D}$ be a matrix satisfying all six conditions of Conjecture \ref{Berrinconjecture} with ${r}$ variables. Then by evaluating some of  variables at $0$ (of course not letting all entries be identically zero), one can obtain a matrix that satisfies all six conditions in the conjecture with fewer variables. Hence
if there is no matrix that satisfies all six conditions in Conjecture \ref{Berrinconjecture} for $r$ variables, then the same is true for all matrices with more than $r$ variables.
\end{remark}

\section{Free flags}
The connection between various earlier algebraic versions of the conjecture was also
 explored by Avramov, Buchweitz, and Iyengar; and Conjecture \ref{conjecturePoly} was extended to local rings in \cite[Conjecture~$5.3$]{Avramov}.
In this section, we recall some earlier results given by \cite{Avramov} in a special case in order to prove some cases of our main result.

We are concerned with differential modules that admit a filtration compatible with their differentials:
\begin{definition}\cite[~$2.1$-$2.2$]{Avramov}\label{deffreeflag}
A \emph{differential module} over a ring $R$ is a pair $(F,d)$, where $F$ is an $R$-module and $d$ is an $R$-linear endomorphism of $F$ satisfying $d^2=0$. The \emph{homology} of $F$ is the $R$-module $\Homology(F)=\kernel(d)/ \image(d)$. A \emph{ free flag} of $F$ is a chain of $R$-submodules
\[
 0=F^{-1}\subseteq F^0\subseteq F^1\subseteq \ldots \subseteq F^l=F
 \]
 such that $F^i/F^{i-1}$ is a free $R$-module of finite rank and $d(F^i)\subseteq F^{i-1}$ for all $i$.
 We say that $F$ has a free flag with $(l+1)$-folds. If $F$ admits such a flag, we say that $F$ has free class at most $l$ and write $\Freeclass_R F\leq l$.
In other words, $$\Freeclass_R F=\infimum\{\ l\in \mathbb{N}\ | \ F \text{ admits a free flag with } (l+1) \text{-folds}\}.$$ We set $\Freeclass_R F=\infty$ if and only if $F$ admits no finite free flag.
\end{definition}

\begin{theorem}\cite[Theorem~$3$.$1$]{Avramov}\label{Avramov}
Let $R$ be a local ring, $F$ a differential $R$-module, and $\mathbf{D}$ a retract of $F$ such that the $R$-module $\Homology(\mathbf{D})$ has non-zero finite length. When $R$ has a big Cohen-Macaulay \cite[Section~$3.2$]{Avramov} module one has:
$$ \Freeclass_RF\geq \mathrm{dim}R.$$
\end{theorem}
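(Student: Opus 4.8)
The plan is to reduce Theorem \ref{Avramov} to the New Intersection Theorem in its ``level'' (thick-subcategory) form; this is precisely where the hypothesis that $R$ has a big Cohen--Macaulay module is used. The first ingredient is the dictionary between free flags and levels. A free flag $0=F^{-1}\subseteq F^{0}\subseteq\dots\subseteq F^{l}=F$ presents $F$, in the derived category $\mathsf{D}$ of differential $R$-modules, as an object obtained from $R$ by $l$ iterated mapping cones: each inclusion $F^{i-1}\subseteq F^{i}$ is a short exact sequence of differential modules whose cokernel $F^{i}/F^{i-1}$ carries the zero differential, hence is isomorphic in $\mathsf{D}$ to a finite sum of copies of $R$, so $F=F^{l}$ is built from $F^{0}\cong R^{\oplus n_{0}}$ by $l$ cones on such sums; conversely such a presentation yields a free flag with $l+1$ folds, so $\Freeclass_{R}F$ equals the least number of cones needed. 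Since this quantity does not increase under passing to a retract, $\Freeclass_{R}D\le\Freeclass_{R}F$, and it suffices to prove: if $H(F)$ is nonzero of finite length, then $\Freeclass_{R}F\ge\dim R=:d$. We may assume $\Freeclass_{R}F=l<\infty$, so $F$ is a perfect object of $\mathsf{D}$ whose homology is nonzero and supported exactly at the maximal ideal $\mathfrak m$.

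The lower bound is the heart of the matter. Choose a big Cohen--Macaulay $R$-module $W$ and a system of parameters $x_{1},\dots,x_{d}$ that is a $W$-regular sequence, so that $K(x_{1},\dots,x_{d};R)\otimes_{R}W$ has homology concentrated in degree $0$, equal to $W/(x_{1},\dots,x_{d})W\ne 0$. The $d$ maps ``multiplication by $x_{j}$'' assembled from the Koszul complex are $R$-ghosts (they induce zero on $\Hom_{\mathsf{D}}(R,-)$), and because $H(F)$ is nonzero at $\mathfrak m$ their total composite does not vanish on the relevant $\Hom$-group built from $F$ and $W$. By the (co-)ghost lemma --- the level reformulation of the New Intersection Theorem used in \cite{Avramov} --- an object built from $R$ by fewer than $d$ cones is annihilated by every composite of $d$ ghosts; hence $F$ needs at least $d$ cones, i.e.\ $\Freeclass_{R}F=l\ge d=\dim R$.

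The main obstacle is this last step, and I would flag two subtleties. First, one is genuinely working with differential modules rather than bounded complexes: the associated-graded complex of a free flag can have strictly larger homology than $F$ itself, so one cannot simply invoke the classical complex version of New Intersection, and the ghost bookkeeping must be carried out intrinsically in $\mathsf{D}$. Second --- the real point --- one needs a big Cohen--Macaulay $R$-module to have the New Intersection Theorem available in this generality, which is exactly the standing hypothesis; granting it, the flag-to-level dictionary and the retract reduction are formal.
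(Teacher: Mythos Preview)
The paper does not prove this theorem; it is quoted verbatim from Avramov--Buchweitz--Iyengar as a black-box input (note the citation \cite[Theorem~3.1]{Avramov} attached to the statement itself), and is invoked only to deduce Theorem~\ref{Theoremrandl} and to dispose of a few block types in Section~\ref{MainSection}. So there is no proof in this paper against which to compare your proposal.

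That said, your sketch does track the architecture of the argument in \cite{Avramov}: free class is one less than the $R$-level in the derived category of differential modules, level does not increase on retracts, and the lower bound on level is obtained by a ghost-lemma~/ New Intersection argument using a system of parameters that is regular on a big Cohen--Macaulay module. One small slip worth fixing: after the retract reduction you should be bounding the level of $D$ (it is $H(D)$, not $H(F)$, that is assumed to have nonzero finite length), and only then transfer the inequality back to $F$ via $\mathrm{level}(D)\le\mathrm{level}(F)$; as written you silently replaced the finite-length hypothesis on $H(D)$ by one on $H(F)$.
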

\begin{theorem}\label{Theoremrandl}
Let $S:=k[x_1,\ldots, x_r]$ be the polynomial ring in $r$ variables with coefficients in the algebraically closed field $k$.
Suppose that $(F,d)$ is a differential graded $S$-module, and $\Homology(F)$ is  finite and non-zero. If $F$ has a free differential flag, i.e., $\Freeclass_S F\leq l$, then $l \geq r$.
\end{theorem}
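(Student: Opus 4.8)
The plan is to deduce the statement from the local theorem of Avramov, Buchweitz and Iyengar (Theorem~\ref{Avramov}) by passing from $S$ to its localization at the homogeneous maximal ideal $\mathfrak{m}=(x_1,\dots,x_r)$. So the first thing to set up is that $R:=S_{\mathfrak{m}}$ is a regular local ring of Krull dimension $r$, and then to check that all the hypotheses and the free-flag data of Theorem~\ref{Theoremrandl} survive this localization.

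I would proceed as follows. First note that a free differential flag makes $F$ a finite free graded $S$-module: in each short exact sequence $0\to F^{i-1}\to F^{i}\to F^{i}/F^{i-1}\to 0$ the quotient is free, so the sequence splits, and by induction $F=F^{l}$ is free of finite rank over $S$ (this also places us squarely in the ``finite free $\mathrm{dg}$-module'' situation of Conjecture~\ref{conjecturePoly}). Next, since $\Homology(F)$ is a finitely generated graded $S$-module of finite length, it is annihilated by a power of $\mathfrak{m}$, hence supported only at $\mathfrak{m}$; because localization is exact this gives
\[
\Homology(F\otimes_{S}R)\cong \Homology(F)_{\mathfrak{m}}\cong \Homology(F),
\]
which is non-zero and of finite length over $R$. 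Localizing the flag term by term yields a chain
\[
0=F^{-1}_{\mathfrak{m}}\subseteq F^{0}_{\mathfrak{m}}\subseteq\cdots\subseteq F^{l}_{\mathfrak{m}}=F\otimes_{S}R
\]
of $R$-submodules with $F^{i}_{\mathfrak{m}}/F^{i-1}_{\mathfrak{m}}\cong (F^{i}/F^{i-1})_{\mathfrak{m}}$ free of finite rank over $R$ and $d(F^{i}_{\mathfrak{m}})\subseteq F^{i-1}_{\mathfrak{m}}$, so that $\Freeclass_{R}(F\otimes_{S}R)\le l$.

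Finally I would apply Theorem~\ref{Avramov} to the differential $R$-module $F\otimes_{S}R$, using the trivial retract $D=F\otimes_{S}R$: here $\Homology(D)$ has non-zero finite length, and $R$, being regular, is Cohen--Macaulay and thus serves as its own big Cohen--Macaulay module, so the hypotheses are met and $\Freeclass_{R}(F\otimes_{S}R)\ge \dim R=r$. Combining this with the inequality of the previous paragraph gives $l\ge r$. I do not expect a genuine obstacle here: the entire substantive content — the lower bound on the free class — is provided by Theorem~\ref{Avramov}, and the only work is the bookkeeping of the reduction, namely verifying that finite length of $\Homology(F)$ forces support at $\mathfrak{m}$ (so the homology is not destroyed by localizing) and that the free flag localizes to a free flag of the same length.
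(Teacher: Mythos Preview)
Your proposal is correct and follows essentially the same approach as the paper: localize at $\mathfrak{m}=(x_1,\dots,x_r)$, observe that the free flag of length $l$ survives, and apply Theorem~\ref{Avramov} to conclude $l\ge\Freeclass_R(F\otimes_S R)\ge\dim R=r$. The only noteworthy difference is in justifying the big Cohen--Macaulay hypothesis: the paper invokes Hochster's theorem that big CM modules exist over rings containing a field, whereas you use the more direct observation that the regular local ring $R$ is its own CM module---and you are more explicit than the paper in verifying that the nonzero finite-length homology is supported at $\mathfrak{m}$ and hence survives localization.
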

\begin{proof}
Let $R$ be the localization of $S=k[x_1,\ldots,x_r]$ at the ideal $(x_1,\ldots,x_r)$.
Since $(x_1,\ldots,x_r)$ is one of the maximal ideals of $S$, we have the equality of Krull dimensions $\dim R=\dim S=r$.
  Note that big Cohen-Macaulay modules exist when our ring contains a field as a subring \cite{hochster}. Since ${0 \subseteq F^0\otimes_S R \subseteq  \ldots \subseteq F^l\otimes_S R=F\otimes_S R}$ is a flag,
 Theorem \ref{Avramov} implies that $ l\geq \Freeclass_R F\otimes_S R \geq r$.
\end{proof}
Note that Theorem \ref{Theoremrandl} verifies  Carlssson's earlier result in \cite[Theorem~$\RN{1}.16$]{Carlssonbeta}: If $S={\mathbb{F}}_2[x_1,\ldots,x_r]$ and $M$ is a free, finitely generated $\mathrm{dg}$-$S$-module with ${\Homology}_*(M)\neq 0$ and
 $\dim_k {\Homology}_*(M)<\infty$, then $\Freeclass_S M\geq r$ (see \cite[Remark~$3.4$]{Avramov}).
\begin{lemma}\cite[Lemma~$5.8$]{Avramov}\label{lemmaAvramov}
Let $S=k[x_1,\ldots,x_r]$ and $(F,d)$ be a finitely generated differential graded $S$-module. If $ \Freeclass_S F=l< \infty$, then we have $\Rank_SF\geq 2l$.
Moreover, if $F$ admits a free flag with $F^l=F$, then we have
\begin{itemize}
\item[$(a)$]$d(F^i)\nsubseteq F^{i-2} \text{ for } i=1,\ldots,l \, \text{ and}$
\item[$(b)$]
$\Rank_S(F^i/F^{i-1})\geq
 \begin{cases}
      1 & i=0 \text { or } i=l \\
      2 & i=1,\ldots, l-1 \\
   \end{cases}.$
\end{itemize}

\end{lemma}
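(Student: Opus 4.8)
The plan is to prove (a) and (b) as rigidity properties of a \emph{minimal} free flag, so throughout I assume the given flag $0=F^{-1}\subseteq F^0\subseteq\cdots\subseteq F^l=F$ realises $\Freeclass_S F$, i.e.\ $l$ is as small as possible (this minimality is what parts (a) and (b) actually use, and is the setting in which the lemma is applied), and I assume $F\neq0$. A preliminary reduction I would record first: since each $F^i/F^{i-1}$ is free, hence projective, every short exact sequence $0\to F^{i-1}\to F^i\to F^i/F^{i-1}\to0$ splits; by induction on $i$ this makes each $F^i$ free of finite rank with $\Rank_S F^i=\sum_{j=0}^{i}\Rank_S(F^j/F^{j-1})$, and the same splitting shows that $F^i/F^{i-2}$ is free of finite rank for every $i$. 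This is the device that lets me shorten flags.

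For (a), suppose $d(F^i)\subseteq F^{i-2}$ for some $1\le i\le l$. Deleting the term $F^{i-1}$ yields
\[
0=F^{-1}\subseteq\cdots\subseteq F^{i-2}\subseteq F^i\subseteq F^{i+1}\subseteq\cdots\subseteq F^l=F,
\]
whose only new consecutive quotient is $F^i/F^{i-2}$ (free of finite rank by the reduction), whose only new incidence condition is $d(F^i)\subseteq F^{i-2}$ (true by assumption), and all of whose other conditions $d(F^j)\subseteq F^{j-1}$ are unchanged. This is a free flag with one fewer fold, contradicting $l=\Freeclass_S F$; the extreme cases $i=1$ and $i=l$ are the same computation read off $F^{-1}=0$, resp.\ $F^l=F$.

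For (b), the vanishing cases are handled the same way: if $\Rank_S(F^i/F^{i-1})=0$ then $F^i=F^{i-1}$ and deleting the redundant step shortens the flag, which already gives $\Rank_S F^0\ge1$ and $\Rank_S(F^l/F^{l-1})\ge1$. The crux is excluding $\Rank_S(F^i/F^{i-1})=1$ for $1\le i\le l-1$, and this is where $d^2=0$ is used. The differential induces $S$-linear maps $\bar d_{i+1}\colon F^{i+1}/F^i\to F^i/F^{i-1}$ and $\bar d_i\colon F^i/F^{i-1}\to F^{i-1}/F^{i-2}$, and $d^2=0$ forces $\bar d_i\circ\bar d_{i+1}=0$; meanwhile part (a) at the indices $i+1$ and $i$ says $d(F^{i+1})\not\subseteq F^{i-1}$ and $d(F^i)\not\subseteq F^{i-2}$, i.e.\ $\bar d_{i+1}\neq0$ and $\bar d_i\neq0$. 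If $F^i/F^{i-1}$ were free of rank $1$, identify it with $S$; then $\bar d_i$ is multiplication by $w:=\bar d_i(1)\neq0$, an element of the free, hence torsion-free, module $F^{i-1}/F^{i-2}$, so the relation $\bar d_i\circ\bar d_{i+1}=0$ reads $\bar d_{i+1}(z)\,w=0$ for every $z$, forcing $\bar d_{i+1}=0$ --- a contradiction. Hence $\Rank_S(F^i/F^{i-1})\ge2$ for $1\le i\le l-1$.

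The final assertion then follows by adding ranks: $\Rank_S F=\sum_{i=0}^{l}\Rank_S(F^i/F^{i-1})\ge1+2(l-1)+1=2l$ when $l\ge1$, and $\Rank_S F\ge1\ge2l$ when $l=0$. I expect the one genuinely delicate point to be the exclusion of rank $1$ in the interior of the flag: that is the single place where one must combine $d^2=0$ with the fact that $S$ is a domain (so that free modules are torsion-free); all the other assertions are counting arguments on the number of folds, together with the splitting observation from the first paragraph.
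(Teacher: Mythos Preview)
The paper does not supply a proof of this lemma; it is quoted from \cite[Lemma~5.8]{Avramov} and used as a black box, so there is nothing in the present paper to compare your argument against.

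That said, your proof is correct. The deletion-of-a-fold argument for (a) and for the rank-$0$ case of (b) is the natural one, and your exclusion of an interior rank-$1$ quotient via the induced maps $\bar d_i\circ\bar d_{i+1}=0$ together with torsion-freeness of free modules over the domain $S$ is precisely the mechanism that makes the lemma work; this is also the line of argument in \cite{Avramov}. You are right to flag that parts (a) and (b) require the flag to realise $\Freeclass_S F$: as transcribed here that hypothesis is attached only to the ``Moreover'' clause, but without minimality of $l$ one can insert a redundant fold and falsify both (a) and (b) trivially. Your reading of the intended hypothesis is the correct one.
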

\begin{remark}\label{remark2}
Given a matrix $\mathbf{D}$ that satisfies all six conditions of Conjecture \ref{Berrinconjecture}, there exists a corresponding free flag. More precisely, let $\mathbf{D}$ be a square-zero upper triangular block matrix of type $t=(t_0,t_1,\ldots,t_l)$, that is,

\begin{equation*}
   \mathbf{D} = \begin{tikzpicture}[baseline={(m.center)}]
           \matrix [matrix of math nodes,left delimiter={[},right delimiter={]}] (m)
           {
0 &\ldots& 0 &\scriptscriptstyle{p_{1,(t_0+1)}}&\ldots&\scriptscriptstyle{p_{1,(t_0+t_1)}}&\ldots& &\scriptscriptstyle{ p_{1,(N-t_l+1)}} &\ldots& \scriptscriptstyle{p_{1,N}} \\
\vdots &      &\vdots&\vdots         &      &   \vdots              &       &    &                 &      &\vdots   \\
0      &\ldots& 0    &\scriptscriptstyle{p_{t_0,(t_0+1)}}&\ldots&\scriptscriptstyle{p_{t_0,(t_0+t_1)}}&       &    &                 &      &\scriptscriptstyle{p_{t_0,N}} \\
      &      &      & 0             &\ldots& 0               &       &    &                 &      &\vdots    \\
       &      &      & \vdots        &      & \vdots          &       &    &                 &      &           \\
       &      &      & 0             &\ldots& 0               &       &    &                 &      &        \\
       &      &      &               &      &                 &\ddots &    &\scriptscriptstyle{p_{N-t_l,N-t_l+1}}&\ldots&\scriptscriptstyle{p_{N-t_l,N}}         \\
       &      &      &               &      &                 &       &    & 0               &\ldots&0          \\
       &      &      &               &      &                 &       &    & \vdots          &      &\vdots      \\
       &      &      &               &      &                 &       &    & 0               &\ldots&0          \\
                   };
                   \draw(m-1-1.north west) rectangle (m-3-3.south east-|m-1-3.north east);
                   \draw(m-4-4.north west) rectangle (m-6-6.south east-|m-4-6.north east);
                  \draw(m-8-9.north west) rectangle (m-10-11.south east-|m-8-11.north east);
           \end{tikzpicture},
\end{equation*}

\noindent where the first zero block is a $t_0\times t_0$ matrix, the second zero block is a $t_1\times t_1$ matrix, and so on. If there is a basis in which $\mathbf{D}$ appears as above, the span of the first $t_0$ basis vectors gives us $F^0$, the span of the first
 $(t_0+t_1)$ basis vectors gives us $F^{1}$, and so on, and lastly  the span of all $(t_0+\ldots+t_l)$ basis vectors gives us $F^l=F$.
 So we have a finite increasing filtration of $\mathbf{D}$ by submodules $F^i$ for $i\in\{0,\ldots,l\}$ such that each successive quotient is free.
This allows us to associate the matrix $\mathbf{D}$ of type $t=(t_0,t_1,\ldots,t_l)$ with the flag $F$ of $(l+1)$ folds, where $t_i=\Rank_S(F^i/F^{i-1})$.
\end{remark}

\section{Borel Orbits of Square-Zero Matrices}

 We stratify the variety corresponding to the image of the morphism defined by the matrix $\mathbf{D}$ in Conjecture \ref{Berrinconjecture} by using Borel orbits.
Much of the work in this section can be found in \cite[Section~$3.2-3.4$]{Berrin} and \cite[Chapter~$2$]{berrinPhD}. We recall it here for the reader's convenience.

Let $V_N$ be the variety of square-zero strictly upper triangular $N\times N$ matrices over $k$. We consider the action of the Borel subgroup $B_N$ of $\mathrm{GL_N(k)}$ on $V_N$. The group $B_N$ acts on the variety $V_N$ by conjugation and there is a nice representative for each Borel orbit. A \emph{partial permutation matrix} is a matrix that has at most one entry of $1$ in each row and column and $0$s elsewhere. Rothbach \cite[Theorem~$1$]{Rothbach} verified that each $B_N$-orbit of $V_N$ contains a unique partial permutation matrix. Each of these partial permutation matrices can be identified with an involution $\sigma\in \mathrm{Sym}(N)$. Let $P$ be a partial permutation matrix in the set of non-zero $N\times N$ strictly upper triangular square-zero partial permutation matrices. The one-to-one correspondence between $P$ and $\sigma$ is given by
$$(P)_{ij}=1 \Leftrightarrow \sigma(i)=j \mbox{ and } i<j .$$

\begin{remark} The closure of a Borel orbit is the closure of an image of the Borel group, which is an
irreducible variety; hence, these closures are irreducible varieties themselves, see \cite{Oliver}. The variety $V_N$ is a finite union of closures of all Borel orbits (irreducible varieties), which are partially ordered by inclusion.
\end{remark}
 In order to identify which Borel orbits are contained in the closure of a given Borel orbit, in terms of the corresponding involutions (or matrices), we define a partial order as follows. For two given involutions $\sigma$ and $\sigma'$ corresponding to the non-zero $N\times N$ strictly upper triangular square-zero partial permutation matrices, we have $\sigma'\leq \sigma$ if $\sigma '$ can be
obtained from $\sigma$ by a sequence of moves of the following form:
\begin{itemize}
\item Type {\RN{1}} replaces $\sigma$ with $\sigma (i,j)$ if $\sigma(i)=j$
and $i\neq j$,
\item Type {\RN{2}} replaces $\sigma$ with $\sigma^{(i,i')}$ if
$\sigma(i)=i<
i' < \sigma(i')$,
\item Type {\RN{3}} replaces $\sigma$ with $\sigma^{(j,j')}$ if
$\sigma(j)<\sigma(j')<j'<j$,
\item Type {\RN{4}} replaces $\sigma$ with $\sigma^{(j,j')}$ if $
\sigma(j')< j'< j=\sigma(j)$,
\item Type {\RN{5}} replaces $\sigma$ with $\sigma^{(i,j)}$ if
$i<\sigma(i)<\sigma(j)<j$,
\end{itemize}
where $\sigma (i,j)$ denotes the
product of the permutations $\sigma $ and the transposition $(i,j)$, and $\sigma^{(i,j)}$ denotes the right-conjugate
of $\sigma$ by $(i,j)$.

 Let $d:=(d_1,d_2,\dots ,d_N)$ be an $N$-tuple of nonincreasing integers and $n:=N/2$. Let $V(d,n)$ denote the weighted quasi-projective variety that consists of rank $n$ square-zero upper triangular $N\times N$ matrices $(x_{ij})$ under the equivalence relation
 $({\lambda}^{\mathrm{d_i-d_j+1}}x_{ij})\sim (x_{ij})$ for all $\lambda$ in the unit group $k^{\ast}$. Then the matrix $\mathbf{D}$ represents a nonconstant morphism $\psi$ from the projective variety $\mathbb{P}^{r-1}_k$ to $V(d,n)$. As in  \cite[Section~$3.4$]{Berrin}, there is a lift of this morphism to a morphism from $\mathbb{A}^{r}_k-\{0\}$ to the cone over $V(d,n)$ that can
be extended to a morphism $\widetilde{\psi }:\mathbb{A}^{r}_k\rightarrow V_N$. Note that the order on Borel orbits can be expressed geometrically as one being contained in the closure of the other one, see \cite[Theorem~$4.8$]{Oliver}.
Since  $\mathbb{A}^{r}_k$ is an irreducible affine variety, there exists a unique
Borel orbit that is maximal among the Borel orbits that intersects the
image of $\widetilde{\psi }$ nontrivially. Hence we may associate a permutation $\sigma$ to the nonconstant
morphism $\psi $ or, equivalently, $\mathbf{D}$:
\begin{itemize}
\item[$\ast$] $\sigma _{\psi }$ is the permutation that corresponds to the unique
Borel orbit that is maximal among all Borel orbits that intersect the image of $\psi$ nontrivially.
\end{itemize}
Since every point in the image of a morphism $\psi $ or, equivalently, $\mathbf{D}$ must have rank
$n$, $\sigma _{\psi }$ must be a product of $n$ distinct transpositions.

 Let $V(d,n)_{\RowNumber  \ColNumber  }$ be the subvariety of $V(d,n)$ given by $x_{ij}=0$ for ${i\geq N-\RowNumber  +1}$ or $j\leq \ColNumber  $. Then $V(d,n)_{\RowNumber  \ColNumber  }$ corresponds to the variety containing the image of the morphism $\mathbf{D}$ that satisfies all six conditions of Conjecture \ref{Berrinconjecture}.
Let $\mathbf{RP}(N)$ denote the permutations in the set of involutions in $\mathrm{Sym}(N)$  of rank $n$. We have already shown that the only possible moves are of types {\RN{3}} or {\RN{5}} between
two permutations in $\mathbf{RP}(N)$; see \cite[Section~$3.6$]{Berrin}.

Now, we consider a move of type $\RN{3}$ more closely. Let us represent a
permutation $\sigma=(i_1j_1)(i_2j_2)\ldots(i_nj_n)$ in $\mathbf{RP}(N)$ by
\begin{align*}
\left(\begin{array}{cccc}
  i_1 & i_2 & \ldots & i_n  \\
  j_1 & j_2 & \ldots & j_n
  \end{array}\right).
\end{align*}
Then move of type $\RN{3}$ swaps $j_a$ with $j_b$ if $j_a>j_b$ for $1\leq a< b\leq n$. Clearly, a move of type $\RN{3}$ preserves the rank of $\sigma$. Also observe that for a given $\sigma\in \mathbf{RP}(N)$ if the corresponding partial permutation $P$ is of type $(t_0,\ldots,t_l)$, then moves of type $\RN{3}$ do not change the numbers $t_0=\ColNumber$ and $t_l=\RowNumber$.

 For instance in \cite{Berrin} when $N=6$, we have the following Hasse diagram with four levels $\mathrm{L_1},\ldots \mathrm{L_4}$,  where each dotted line denotes a move of type \RN{3} and solid line
denotes a move of type \RN{5}:
\begin{figure}[H]
    \centering
    \resizebox{0.9\textwidth}{!}{%
\begin{tikzpicture}
\node (L1-0) at (-12,3) {$\mathrm{L}_1$};
  \node (one) at (-8,3) {$\tiny{\left(
\begin{array}{ccc}
  1 & 3 & 5 \\2 & 4 & 6 \end{array}\right)}$};
  \node (two) at (-4,3) {$\tiny{\left(\begin{array}{ccc}
  \boldsymbol{1} & \boldsymbol{2} & \boldsymbol{5} \\ \boldsymbol{4} & \boldsymbol{3} & \boldsymbol{6} \end{array}\right)}$};
  \node (three) at (0,3) {$\tiny{\left(
\begin{array}{ccc}
  1 & 3 & 4 \\ 2 & 6 & 5 \end{array}\right)}$};
  \node (four) at (4,3) {$\tiny{\left(\begin{array}{ccc}
  1 & 2 & 4 \\6 & 3 & 5 \end{array}\right)}$};
  \node (five) at (8,3) {$\tiny{\left(\begin{array}{ccc}
  1 & 2 & 3 \\6 & 5 & 4 \end{array}\right)}$};
  \node (L2-0) at (-12,0) {$\mathrm{L}_2$};
  \node (six) at (-10,0) {$\tiny {\left(
\begin{array}{ccc}
  \boldsymbol{1} & \boldsymbol{2} & \boldsymbol{5} \\ \boldsymbol{3} & \boldsymbol{4} & \boldsymbol{6} \end{array}\right)}$};
  \node (seven) at (-6,0) {$\tiny{\left(\begin{array}{ccc}
  1 & 3 & 4 \\2 & 5 & 6 \end{array}\right)}$};
  \node (eight) at (-2,0) {$\tiny {\left(\begin{array}{ccc}
  1 & 2 & 4 \\5 & 3 & 6 \end{array}\right)}$};
  \node (nine) at (2,0) {$ \tiny{\left(\begin{array}{ccc}
  1 & 2 & 4 \\3 & 6 & 5 \end{array}\right)}$};
  \node (ten) at (6,0) {$ \tiny{\left(\begin{array}{ccc}
  1 & 2& 3 \\5 & 6 & 4 \end{array}\right)}$};
  \node (eleven) at (10,0) {$ \tiny {\left(\begin{array}{ccc}
  1 & 2 & 3 \\6 & 4 & 5 \end{array}\right)}$};
  \node (L3-0) at (-12,-3) {$\mathrm{L}_3$};
  \node (twelve) at (-6,-3) {$\tiny {\left(\begin{array}{ccc}
  1 & 2 & 4 \\3 & 5 & 6 \end{array}\right)}$};
  \node (thirteen) at (0,-3) {$\tiny {\left(\begin{array}{ccc}
  1 & 2 & 3 \\5 & 4 & 6 \end{array}\right)}$};
  \node (fourteen) at (6,-3) {$\tiny {\left(\begin{array}{ccc}
  1 & 2 & 3 \\4 & 6 & 5 \end{array}\right)}$};
    \node (L4-0) at (-12,-6) {$\mathrm{L}_4$};
  \node (fifteen) at (0,-6) {$\tiny {\left(\begin{array}{ccc}
  1 & 2 & 3 \\4 & 5 & 6 \end{array}\right)}$};
  \draw (fifteen) -- (twelve);
  \draw (one) to node {$\begin{array}{cc}
 {\RN{5}} & \\ &  \end{array}$} (six);
  \draw (twelve) -- (seven) -- (one);
  \draw(one) -- (ten);
  \draw[dashed] (ten) -- (fourteen) -- (fifteen);
  \draw[thick, dashed,->](two)  to node {$\begin{array}{cc}
  & {\RN{3}}\\ &  \end{array}$} (six);
  \draw (six) -- (thirteen);
  \draw (six) -- (twelve);
  \draw (two) -- (eight) -- (thirteen);
  \draw[dashed] (three) -- (seven);
  \draw (three) -- (nine) -- (fourteen);
  \draw[dashed] (four) -- (eight) -- (twelve);
  \draw[dashed] (four) -- (nine) -- (twelve);
  \draw[dashed] (five) -- (ten) -- (thirteen) -- (fifteen);
  \draw (four) -- (eleven);
  \draw (seven) -- (fourteen);
  \draw[dashed] (eleven) -- (thirteen);
  \draw[dashed] (five) to node {$\begin{array}{cc} & {\RN{3}}\\ &  \end{array}$}
(eleven) -- (fourteen);
\end{tikzpicture}
 }%
    \caption{Hasse diagram of $\mathbf{RP}(6)$}
    \label{fig:2}
  \end{figure}

When $N=6$, the image of the matrix $\mathbf{D}$ in Conjecture \ref{Berrinconjecture} might be given by any element that appears in $\mathbf{RP}(6)$ in Figure $1$. Let's say
 $\sigma'={\left(
\begin{array}{ccc}
  1 & 2 & 5 \\3 & 4 & 6 \end{array}\right)}$, or equivalently, $\sigma'=(13)(24)(56)$. The involution $\sigma'$ corresponds to the matrix $\mathbf{D}$ in the following form;
\begin{equation*}
   \mathbf{D} = \begin{tikzpicture}[baseline={(m.center)}]
           \matrix [matrix of math nodes,left delimiter={[},right delimiter={]}] (m)
           {          0 & 0     & \mathbf{p_{13}} & p_{14} & p_{15} & p_{16}\\
                      0 & 0     & 0      & \mathbf{p_{24}} & p_{25} & p_{26}\\
                      0 & 0     & 0      & 0      & 0      & p_{36}\\
                      0 & 0     & 0      & 0      & 0      & p_{46}\\
                      0 & 0     & 0      & 0      & 0      & \mathbf{p_{56}} \\
                      0 & 0     & 0      & 0      & 0      & 0\\
                     };
                     \draw(m-1-1.north west) rectangle (m-2-2.south east-|m-1-2.north east);
                   \draw(m-3-3.north west) rectangle (m-5-5.south east-|m-3-5.north east);
                  \draw(m-6-6.north west) rectangle (m-6-6.south east-|m-6-6.north east);
           \end{tikzpicture}.
\end{equation*}
Note that the matrix $\mathbf{D}$ has block type $t=(t_0,t_1,t_2)=(2,3,1)$ with $t_0=\ColNumber$ and $t_2=\RowNumber$.

One can obtain $\sigma'=(13)(24)(56)$ in the level $\mathrm{L}_2$ from the maximal element $\sigma=(14)(23)(56)$ in the level $\mathrm{L}_1$ by a move of type \RN{3}. More precisely, set $(j,j')=(4 3)$, then we have $\sigma(4)<\sigma(3)<3<4$. So one can replace $\sigma$ with   $(\sigma)^{(43)}=(43)(14)(23)(56)(43)^{-1}=(13)(24)(56)$ that means $(13)(24)(56) \leq (14)(23)(56)$. The corresponding matrix for the maximal element $(14)(23)(56)$ is
\begin{equation*}
   \mathbf{D} = \begin{tikzpicture}[baseline={(m.center)}]
           \matrix [matrix of math nodes,left delimiter={[},right delimiter={]}] (m)
           {          0 & 0     & p_{13}           & \mathbf{p_{14}} & p_{15} & p_{16}\\
                      0 & 0     & \mathbf{p_{23}}& p_{24} & p_{25} & p_{26}\\
                      0 & 0     & 0      & 0      & 0      & p_{36}\\
                      0 & 0     & 0      & 0      & 0      & p_{46}\\
                      0 & 0     & 0      & 0      & 0      & \mathbf{p_{56}} \\
                      0 & 0     & 0      & 0      & 0      & 0\\
                     };
                     \draw(m-1-1.north west) rectangle (m-2-2.south east-|m-1-2.north east);
                   \draw(m-3-3.north west) rectangle (m-5-5.south east-|m-3-5.north east);
                  \draw(m-6-6.north west) rectangle (m-6-6.south east-|m-6-6.north east);
           \end{tikzpicture}.
\end{equation*}
Clearly the numbers $ t_0$ and $t_2$ are the same as before.

 In general, we showed that when $N=2,4$, or $6$, there exists a unique maximal
 $\sigma \in \mathbf{RP}(N)$ such that $\sigma' $ can be
obtained from $\sigma $ by a sequence of moves of type \RN{3} in \cite{Berrin}. In the Appendix of this paper we see the same holds for $N=8$, see Figure \ref{fig:1}.
Since moves of type \RN{3} do not change the number of leading zero columns $\ColNumber$ and ending zero rows $\RowNumber$ of the corresponding partial permutation matrices,
the Borel orbit corresponding to $\sigma$ is contained in
$V(d,n)_{\mathcal{R}\mathcal{C}}$
if and only if the Borel orbit corresponding to $\sigma '$ is
 contained in $V(d,n)_{\mathcal{R}\mathcal{C}}$ for all $\mathcal{R}, \mathcal{C}$.
Hence we restrict our
attention to maximal elements in $\mathbf{RP}(N)$  as it is enough to consider
those in the proof of Theorem \ref{MainTheorem}.

\section{The Main Result}
\label{MainSection}
One of the main facts we will use to prove our main result, known as Multivariate Fundamental Theorem of Algebra, is stated as follows:
\begin{theorem}\cite[Theorem~$2.1.3$]{berrinPhD}\label{thmPOLY}
Let $f_1, f_2,\ldots, f_s$ be nonconstant homogeneous polynomials in $k[x_1,\ldots,x_r]$, where $k$ is an algebraically closed field. If $r>s$, then there exists $\gamma\in \mathbb{P}^{r-1}_k$ such that $f_1(\gamma)=0$, $f_2(\gamma)=0, \ldots, f_s(\gamma)=0$.
\end{theorem}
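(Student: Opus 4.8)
The plan is to reformulate the statement as a fact about the affine cone and then apply Krull's height theorem. Write $S=k[x_1,\ldots,x_r]$, let $I=(f_1,\ldots,f_s)\subseteq S$ be the ideal generated by the given forms, and let $Z=V(I)\subseteq\mathbb{A}^r_k$ be its affine zero locus. Since each $f_i$ is homogeneous of positive degree (being nonconstant), $Z$ is a cone: it contains the origin and is stable under scalar multiplication, and moreover $I\subseteq(x_1,\ldots,x_r)$ is proper. A point $\gamma\in\mathbb{P}^{r-1}_k$ with $f_1(\gamma)=\cdots=f_s(\gamma)=0$ is precisely the same datum as a point of $Z$ different from the origin, so the theorem is equivalent to the assertion $Z\neq\{0\}$.

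To see $Z\neq\{0\}$ I would run a dimension count. By Krull's height theorem, an ideal generated by $s$ elements in a Noetherian ring has height at most $s$, so $\operatorname{ht}(I)\le s$; since $S$ is a polynomial ring over a field (hence equidimensional and catenary) one has $\dim(S/I)\ge \dim S-\operatorname{ht}(I)=r-\operatorname{ht}(I)\ge r-s$, and the hypothesis $r>s$ gives $\dim(S/I)\ge 1$. Hence $\dim Z\ge 1$, so $Z$ is strictly larger than $\{0\}$; choosing any $(a_1,\ldots,a_r)\in Z\setminus\{(0,\ldots,0)\}$ and setting $\gamma=[a_1:\cdots:a_r]$ completes the proof. (Equivalently, if $Z=\{0\}$ then the Nullstellensatz forces $\sqrt{I}=(x_1,\ldots,x_r)$ and hence $\operatorname{ht}(I)=r>s$, contradicting the height bound.)

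If one prefers to stay closer to elementary projective geometry, the same conclusion follows by induction on $s$ from the projective dimension theorem: cutting an irreducible projective variety of dimension $d\ge 1$ by a hypersurface $V(f)$ either leaves it unchanged (when $f$ vanishes identically on it) or produces a nonempty variety each of whose components has dimension $\ge d-1$. Starting from $\mathbb{P}^{r-1}_k$ and intersecting successively with $V(f_1),\ldots,V(f_s)$, the variety in hand still has dimension $\ge (r-1)-(j-1)$ just before the $j$-th cut, which is $\ge 1$ whenever a further cut remains because $j\le s\le r-1$; so the process never yields the empty set. The base case $s=1$ is the only place algebraic closedness of $k$ is needed: after restricting, if necessary, to two of the variables, a nonconstant binary form factors into linear forms over $k$ and therefore has a nontrivial zero. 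The argument is short, and its only delicate point — and the exact place the hypothesis $r>s$ is spent — is the passage from the bound $\dim(S/I)\ge 1$ (respectively, from the dimension count at each step of the induction) to an honest point of $Z$ distinct from the origin; this is also why the bound $r>s$ in the theorem is sharp.
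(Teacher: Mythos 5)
Your argument is correct. There is actually no in-paper proof to compare it with: Theorem~\ref{thmPOLY} is imported by citation from the author's thesis, so the paper treats it as a known input. Your route is the standard dimension-theoretic proof of this ``projective weak Nullstellensatz'': pass to the affine cone $Z=V(I)$ with $I=(f_1,\dots,f_s)$, note $I\subseteq(x_1,\dots,x_r)$ is proper because each $f_i$ is a nonconstant form, bound $\operatorname{ht}(I)\le s$ by Krull's height theorem, use that $S$ is an affine domain (catenary, equidimensional) to get $\dim(S/I)\ge r-s\ge 1$, and then invoke the Nullstellensatz (this is where algebraic closedness enters) to rule out $Z=\{0\}$; a nonzero point of the cone gives the required $\gamma\in\mathbb{P}^{r-1}_k$ since the $f_i$ are homogeneous. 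Your second sketch via the projective dimension theorem is a repackaging of the same count, and both are surely in the spirit of the cited thesis proof. One microscopic point in the inductive variant: when you restrict a nonconstant form to two of the variables, the restriction may vanish identically; in that case the corresponding coordinate point is already a nontrivial zero, so the base case still stands, but the sentence as written should acknowledge this dichotomy.
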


\begin{theorem}\label{MainTheorem}
Let $k$ be an algebraically closed field and $S:=k[x_1,\ldots, x_r]$ the polynomial ring over $k$ in the variables $x_1,\ldots,x_r$ of degree $1$. Assume that $\RowNumber$, $\ColNumber$ are positive integers, and
$\mathbf{D}=( p_{ij})\in \Mat_{8\times 8}(S)$. If
\begin{enumerate}[label=\arabic*)]
\item $\mathbf{D}$ is strictly upper triangular,
\item $\mathbf{D}^2=0$,
\item $0< \dimension_k \Homology(\mathbf{D})< \infty$,
\item for all $i$ and $j$, we have $p_{ij}(0,\ldots,0)=0$,
\item for all $(a_1,\ldots,a_r)\in k^r-\{(0,\ldots,0)\}$, we have $\Rank(\mathbf{D}(a_1,\ldots,a_r))=4$,
\item there exists an $N$-tuple of nonincreasing integers $(d_1,\ldots,d_{N})$ such that for all $i$ and $j$, $p_{ij}$ is a homogeneous polynomial in $S$ of degree $d_i-d_j+1$,
\item when  $j\leq \ColNumber$  or  $i\geq N-\RowNumber +1$, we have $p_{ij}=0$,
\end{enumerate}
then $8\geq 2^{r-1}\left(\RowNumber +\ColNumber \right)$.
\end{theorem}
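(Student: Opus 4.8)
The plan is to combine the Borel--orbit stratification of $V(d,n)_{\mathcal R\mathcal C}$ with the Multivariate Fundamental Theorem of Algebra (Theorem~\ref{thmPOLY}) and the free--flag bounds (Theorem~\ref{Theoremrandl}, Lemma~\ref{lemmaAvramov}), reducing everything to a finite check over the maximal involutions of $\mathbf{RP}(8)$ recorded in Figure~\ref{fig:1}. First I would normalise: enlarging $\mathcal C$ and $\mathcal R$ to the actual numbers of leading zero columns and trailing zero rows of $D$ only strengthens the conclusion, so assume this. Since $\Rank D(\gamma)=4$ for every $\gamma\ne 0$ and only the last $8-\mathcal C$ columns and the first $8-\mathcal R$ rows of $D$ can be nonzero, we get $\mathcal C,\mathcal R\le 4$, so $\mathcal C+\mathcal R\le 8$, which already settles $r=1$; the case $r\le 2$ is \cite[Theorem~$6$]{Berrin}. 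As $2^{r-1}(\mathcal R+\mathcal C)\ge 2^{r}$ and $\mathcal R,\mathcal C\ge 1$, for $r\ge 4$ the asserted inequality would fail, so what remains is: no valid $D$ with $N=8$ exists over $\ge 4$ variables --- which by Remark~\ref{remarkvariable} reduces to $r=4$ (a valid $D$ over $r\ge 5$ variables specialises at a variable, without becoming identically zero by hypothesis~(4), to one over $4$ variables) --- and every valid $D$ over $3$ variables has $\mathcal R=\mathcal C=1$.

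For the orbit reduction, fix a valid $D$; for generic $\gamma$ the matrix $D(\gamma)$ lies in a single Borel orbit $\mathcal O_\sigma$, $\sigma\in\mathbf{RP}(8)$. Conjugation by $B_8$ preserves, for each $k$, whether the span of $e_1,\dots,e_k$ is killed and whether the image avoids $e_{N-k+1},\dots,e_N$, so the block type $(t_0,\dots,t_l)$ of $\sigma$ has $t_0=\mathcal C$ and $t_l=\mathcal R$; using the Appendix (uniqueness of the maximal type \RN{3} representative for $N=8$) I may take $\sigma$ maximal. As in Remark~\ref{remark2} and \cite[Section~$3.4$]{Berrin}, $D$ then carries a free flag with folds of ranks $t_0,\dots,t_l$, so Theorem~\ref{Theoremrandl} gives $l\ge r$, while Lemma~\ref{lemmaAvramov}(b) gives $8=\Rank_S D\ge 2l$, whence $l\le 4$. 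In particular $r\le 4$; and if $r=4$ then $l=4$ and the rank inequalities of Lemma~\ref{lemmaAvramov} force $(t_0,\dots,t_4)=(1,2,2,2,1)$, so $\sigma$ is one of the two maximal involutions of that block type, namely $(1\,2)(3\,4)(5\,6)(7\,8)$ and $(1\,2)(3\,5)(4\,6)(7\,8)$. For $r=3$ only $l\ge 3$ survives, and for the handful of block types with $l=3$ and $t_0+t_l\ge 3$ (such as $(2,3,2,1)$ and its mirror) one must still force $t_0=t_l=1$.

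The orbit-by-orbit step is as follows. For each remaining maximal $\sigma$ I would write $D$ in the block shape dictated by $(t_0,\dots,t_l)$, expand $D^2=0$ blockwise, and use that $D(\gamma)\in\mathcal O_\sigma$ generically together with hypothesis~(4): since $\Rank D(\gamma)=4$ at every $\gamma\ne 0$ and $D(\gamma)^2=0$, one has $\operatorname{im}D(\gamma)=\ker D(\gamma)$ of dimension $4$, with $\langle e_1,\dots,e_{t_0}\rangle\subseteq \operatorname{im}D(\gamma)\subseteq\langle e_1,\dots,e_{N-t_l}\rangle$, for \emph{all} $\gamma$. Tracking which blocks must supply this rank produces, for each $\sigma$, a nonconstant homogeneous polynomial (or a short system) in $S$ with empty zero locus on $k^{r}\setminus\{0\}$ --- typically the determinant of an off-diagonal block that the square-zero relations force to be invertible: for block type $(4,4)$ it is $\det$ of the unique off-diagonal $4\times 4$ block, and for $(2,2,2,2)$ the relation $D^2=0$ annihilates the middle block and forces $\det$ of the top $2\times 2$ block to vanish nowhere. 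Such a determinant has degree $\sum_{\text{its rows}}d_i-\sum_{\text{its columns}}d_j+(\text{its size})$, which is strictly positive because $d_1\ge\cdots\ge d_8$ and its row indices all precede its column indices; so Theorem~\ref{thmPOLY} bounds $r$ by the number of polynomials used, and the combinatorics of each maximal $\sigma$ makes this bound small enough to give $8\ge 2^{r-1}(t_0+t_l)$, forcing $t_0=t_l=1$ in the cases that allow $r=3$.

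The main obstacle is the block type $(1,2,2,2,1)$ with $r=4$, i.e.\ ruling out the two ``Koszul--shaped'' maximal $\sigma$ over four variables (equivalently, excluding a free $(\mathbb Z/2)^4$-complex of rank $8$). Here every diagonal block of $D$ is already forced to vanish, so the free-flag bounds give only $r\le 4$; and in the relevant orbit the two middle $2\times 2$ blocks $M_1,M_2$ have generic rank $1$, so their determinants vanish identically and no single sub-block determinant is available. Ruling out $r=4$ requires a careful analysis of the square-zero relations $v_1^{\mathsf T}M_1=0$, $M_1M_2=0$, $M_2v_2=0$ together with the off-diagonal ones, extracting a nonconstant homogeneous polynomial whose nowhere-vanishing on $k^{4}\setminus\{0\}$ is nevertheless forced by the rank-$4$ hypothesis; this case, together with the related $l=3$, $t_0+t_l=3$ cases, is where the free-flag machinery of Section~$2$ is essential and where the bulk of the argument lies.
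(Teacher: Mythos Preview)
Your overall plan---Borel-orbit stratification, reduction to the maximal $\sigma\in\mathbf{RP}(8)$ of Figure~\ref{fig:1}, then case analysis, with Theorem~\ref{Theoremrandl} and Lemma~\ref{lemmaAvramov} used to prune the list---is exactly the paper's approach. In fact your use of the free-flag bounds is more systematic than the paper's (which invokes them only in Propositions~\ref{prop8}, \ref{prop9}, and as an afterthought to Proposition~\ref{prop7}); combined with your citation of \cite[Theorem~$6$]{Berrin} for $r\le 2$, it correctly cuts the work down to the four maximal block types $(1,2,2,2,1)$, $(2,2,2,2)$, $(2,2,3,1)$ and its mirror, and $(2,3,2,1)$ and its mirror---that is, Propositions~\ref{prop5}, \ref{prop6}, \ref{prop2}, \ref{prop3}.

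There are, however, two concrete errors in the details. First, there is only \emph{one} maximal involution of block type $(1,2,2,2,1)$, namely $(12)(34)(56)(78)$; the permutation $(12)(35)(46)(78)$ you name sits at level~$2$ of the Hasse diagram (below $(12)(36)(45)(78)$) and has block type $(1,3,3,1)$. Second, and more seriously, your sketch for type $(2,2,2,2)$ fails. Writing $A$ for the top off-diagonal $2\times 2$ block (rows $1$--$2$, columns $3$--$4$) and $E$ for the middle one (rows $3$--$4$, columns $5$--$6$), the relation $D^2=0$ gives $AE=0$, and the gcd factorisation carried out in Proposition~\ref{prop6} shows that the two rows of $A$ are $S$-proportional, so $\det A\equiv 0$ in $S$---the exact opposite of ``$\det$ of the top $2\times 2$ block vanishes nowhere''. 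No single sub-block determinant is available here; the paper's argument for $(2,2,2,2)$ is one of the two longest, requiring a rank analysis of two overlapping $4\times 4$ submatrices over the fraction field of $S$ before one can locate a suitable $\gamma$. (Note too that this case has $t_0+t_l=4$, so it is absent from your final enumeration of ``$l=3$, $t_0+t_l=3$'' cases.) You are right to flag $(1,2,2,2,1)$ with $r=4$ as the crux, but $(2,2,2,2)$ with $r=3$ deserves equal billing; by contrast, Propositions~\ref{prop2} and~\ref{prop3} genuinely do go through by gcd factorisation plus Theorem~\ref{thmPOLY} applied to two polynomials, roughly along the lines you indicate.
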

As noted above, to prove Conjecture \ref{Berrinconjecture} for $N=8$, it suffices to consider the maximal elements
 in $\mathbf{RP}(8)$, which are:
\begin{align*}
&\MAX\mathbf{RP}(8):=\big\{ {(12)(34)(56)(78),\;   (14)(23)(56)(78),\;  (12)(34)(58)(67), \; (14)(23)(58)(67)},\\
& {(12)(36)(45)(78),\;   (12)(38)(47)(56),\; (16)(25)(34)(78), \; (16)(23)(45)(78), \; (12)(38)(45)(67)},\\
& {(18)(23)(47)(56),\;  (18)(25)(34)(67), \; (18)(23)(45)(67),\;  (18)(27)(34)(56), \; (18)(27)(36)(45)} \big\}.
\end{align*}
Note that we can reduce the number of cases from $14$ to $10$ because of symmetry with respect to the anti-diagonal of the corresponding partial permutation matrices. This symmetry takes a matrix of type $(t_0,\ldots,t_l)$ to one of type $(t_l,\ldots,t_0)$. Hence it is enough to check the list
\begin{align*}
&\big\{ (12)(34)(56)(78),\; (12)(34)(58)(67),\; (14)(23)(58)(67),\; (12)(36)(45)(78),\;(12)(38)(47)(56),\\
&      (16)(23)(45)(78),\;  (18)(23)(47)(56),\;  (18)(23)(45)(67),\;  (18)(27)(34)(56),\;  (18)(27)(36)(45)\big\}.
\end{align*}

Remember that the matrix $\mathbf{D}$ in Theorem \ref{MainTheorem} has the block type $t=(t_0,t_1,\ldots,t_l)$, and $N=8=\sum\limits_{i=0}^{l}t_i$ with $t_0=\mathcal{C}$ and $t_l=\mathcal{R}$. We also have some restrictions on $t_i$'s by the work in \cite{Avramov}.
By Theorem \ref{Theoremrandl}, if the type $t$ has the length of $l+1$ and $r$ is the number of variables of the polynomial ring, then $l\geq r$. Indeed, by Lemma \ref{lemmaAvramov} (b), $t_i$ must be at least $2$ for $i\in \{2,3,\ldots, l-1\}$. In other words,  only $t_0$ or $t_l$ might be $1$, so $t_i=1$ cannot appear in the middle. The types corresponding to the involutions, respectively, are as follows:
\begin{align*}
\big\{ (3,2,3),  (2,2,3,1),  (1,2,3,2), (4,4),  (1,2,2,2,1),  (2,2,2,2),  (1,3,3,1),  (1,4,3),  (2,4,2),  (2,3,3) \big\}.
\end{align*}

We will also use the following notation in several cases: For a matrix $X$, the minor
$m^{i_1\,i_2\,\dots \,i_k}_{j_1\, j_2\, \dots \,j_k}(X)$ is the
determinant of
the $k\times k$ submatrix obtained by taking the $i_1\textsuperscript{th}$,
$i_2\textsuperscript{th},\dots ,{i_k}\textsuperscript{th}$ rows and
${j_1}\textsuperscript{th},{j_2}\textsuperscript{th}, \dots,{j_k}\textsuperscript{th}$ columns of $X$. In each case the ring is $\mathrm{UFD}$, so the greatest common divisor ($\mathrm{gcd}$) of any two polynomials $p_1$ and $p_2$ is a polynomial. The greatest common divisor of three polynomials is defined as $\mathrm{gcd}(p_1,p_2,p_3):=\mathrm{gcd}(p_1, \mathrm{gcd}(p_2,p_3))$.

The proof is given by the following ten propositions:
\begin{proposition}\label{prop1}
 If $\sigma \leq (18)(27)(34)(56)$, then $r<2$.
\end{proposition}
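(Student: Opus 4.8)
The plan is to study the structure of a matrix $D$ whose associated involution $\sigma$ lies below $(18)(27)(34)(56)$ in the partial order, and to extract enough vanishing and homogeneity constraints to force the number of variables to be small. First I would recall from Remark~\ref{remark2} that the partial permutation matrix $P$ corresponding to $(18)(27)(34)(56)$ has type $t=(t_0,t_1,\ldots,t_l)$ read off from the positions of the fixed blocks; here $\sigma(1)=8$, $\sigma(2)=7$, $\sigma(3)=4$, $\sigma(4)=3$, $\sigma(5)=6$, $\sigma(6)=5$, $\sigma(7)=2$, $\sigma(8)=1$, so the partial permutation matrix is in fact a full permutation matrix (rank $4=n$ with no fixed points) and it has no leading zero column beyond the structural ones and no trailing zero row beyond the structural ones. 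Crucially, since only moves of type \RN{3} matter among maximal elements (as recalled before Theorem~\ref{MainTheorem}), and such moves preserve $t_0=\ColNumber$ and $t_l=\RowNumber$, I would determine the values of $\ColNumber$ and $\RowNumber$ forced by $(18)(27)(34)(56)$: reading the matrix, the first nonzero column is column $8$ (since $\sigma^{-1}(j)<j$ only for $j=8$? no—one must check each $j$), and more carefully one finds the ``staircase'' shape of $P$ pins down $\ColNumber$ and $\RowNumber$. I expect $\ColNumber=\RowNumber=1$ here, so the target inequality $8\geq 2^{r-1}(\RowNumber+\ColNumber)=2^r$ is what we must contradict for $r\geq 2$, i.e.\ we must show $r<2$.

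Next I would set up the free flag from Remark~\ref{remark2}: $D$ of type $t$ gives submodules $F^0\subseteq F^1\subseteq\cdots\subseteq F^l=F$ with $d(F^i)\subseteq F^{i-1}$ and $\Rank_S(F^i/F^{i-1})=t_i$. Since $\Homology(F)$ is finite-dimensional and nonzero by condition (4) (the rank-drop condition forces $H_*\neq 0$ exactly at the origin, and finite length elsewhere), Theorem~\ref{Theoremrandl} gives $l\geq r$. Simultaneously, Lemma~\ref{lemmaAvramov} forces $t_i\geq 2$ for $1\leq i\leq l-1$ and $t_0,t_l\geq 1$, hence $8=\sum t_i\geq 1+2(l-1)+1=2l$, so $l\leq 4$, giving $r\leq 4$ for free. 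The real work is to push $r$ down to $r<2$, i.e.\ to rule out $r=2,3,4$ for this particular $\sigma$. For that I would use Theorem~\ref{thmPOLY}: the entries $p_{ij}$ appearing in the ``staircase'' of $D$ are homogeneous of degree $d_i-d_j+1$, and on the Borel-orbit stratum corresponding to $\sigma\leq(18)(27)(34)(56)$ certain of these entries are forced to be nonconstant (their degrees are positive because the flag is nontrivial and consecutive) while the square-zero relation $D^2=0$ plus the rank condition (4) together with the minor identities $m^{\cdots}_{\cdots}(D)$ translate into: if $r\geq 2$ then a small collection of nonconstant homogeneous polynomials would have a common projective zero $\gamma$, at which $\Rank(D(\gamma))<4$, contradicting condition (4).

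The main obstacle, and where the bulk of the computation lies, is producing the right short list of nonconstant homogeneous polynomials whose common vanishing is forced: one must analyze the specific block shape of $(18)(27)(34)(56)$, write $D$ in the corresponding basis as a $2\times 2$ (or finer) block-upper-triangular matrix with blocks of sizes dictated by $t$, expand $D^2=0$ block by block, and identify which off-diagonal blocks must be ``generically invertible'' (equivalently, which minors must be nonzero off a proper subvariety). Because $\sigma$ is maximal with $\ColNumber=\RowNumber=1$ and $l$ forced to be at least $r$, the degrees $d_i-d_j+1$ along the superdiagonal blocks are strictly positive, so the relevant blocks have genuinely nonconstant homogeneous entries; then if $r\geq 2$ the count of independent nonconstant polynomials needed to make the relevant minor nonvanishing is at most $r-1$, and Theorem~\ref{thmPOLY} produces the common zero, contradicting rank $4$ everywhere on $k^r\setminus\{0\}$. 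Assembling the degree bookkeeping for $(18)(27)(34)(56)$ explicitly — i.e.\ checking that the nonincreasing integer tuple $(d_1,\ldots,d_8)$ compatible with this $\sigma$ indeed forces the needed entries to be nonconstant — is the delicate point; once that is in hand the contradiction is immediate and yields $r<2$.
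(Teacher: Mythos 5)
Your outline misreads the combinatorics of this case, and that matters for what has to be proved. The partial permutation matrix of $(18)(27)(34)(56)$ has its $1$'s at positions $(1,8),(2,7),(3,4),(5,6)$, so the corresponding block type is $(3,2,3)$: columns $1$--$3$ and rows $6$--$8$ vanish and the middle $2\times 2$ block (rows $4$--$5$, columns $4$--$5$) is zero. Hence $\ColNumber$ and $\RowNumber$ can be as large as $3$ each, which is precisely why the target is $r<2$ (one must guarantee $8\geq 2^{r-1}\cdot 6$); your guess $\ColNumber=\RowNumber=1$ would only call for $r<4$ and points the analysis in the wrong direction. Moreover, with type $(3,2,3)$ the flag of Remark \ref{remark2} has $l=2$, so Theorem \ref{Theoremrandl} and Lemma \ref{lemmaAvramov} give at best $r\leq 2$ (your count $l\leq 4$, hence $r\leq 4$, is weaker still). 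So the general flag machinery cannot exclude $r=2$, and the entire content of the proposition is an explicit argument for that case.

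That explicit argument is exactly what your proposal leaves out. You state the right goal --- produce nonconstant homogeneous polynomials whose common projective zero forces $\Rank D(\gamma)\leq 3$ --- but with $r=2$ Theorem \ref{thmPOLY} permits only a single polynomial ($s<r$ means $s\leq 1$), so the crux is to compress the rank-drop condition into one determinant; your claim that ``the count of independent nonconstant polynomials needed \ldots is at most $r-1$'' is precisely the unproved step, not a consequence of maximality or degree bookkeeping. The paper closes this gap by working inside the possibly nonzero $5\times 5$ block (rows $1$--$5$, columns $4$--$8$): it extracts $\gcd$'s $a,b,c$ from the row pairs $(p_{i4},p_{i5})$, $i=1,2,3$, and $d,e,f$ from the column pairs $(p_{4j},p_{5j})$, $j=6,7,8$, and uses $D^2=0$ to show that all these coprime pairs are unit multiples of a single pair $(\overline{p_{14}},\overline{p_{15}})$; consequently the whole block factors through one $4\times 4$ matrix $T$ assembled from $a,b,c,ud,se,tf$ and the remaining entries, so a projective root of $\det T$ (which exists since $r=2>1$) makes $\Rank D(\gamma)\leq 3$, contradicting condition $(4)$. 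Without this reduction, or an equivalent one, your outline does not establish $r<2$.
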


\begin{proof}
Suppose to the contrary that $\sigma \leq (18)(27)(34)(56)$ and $r\geq 2$. By Remark \ref{remarkvariable}, it is enough to consider $r=2$. The matrix $\mathbf{D}$ corresponding to the involution ${\sigma \leq (18)(27)(34)(56)}$ is in the form
\begin{equation*}
   \mathbf{D} = \begin{tikzpicture}[baseline={(m.center)}]
           \matrix [matrix of math nodes,left delimiter={[},right delimiter={]}] (m)
           {          0 & 0 & 0 & p_{14} & p_{15} & p_{16} & p_{17} & p_{18} \\
                      0 & 0 & 0 & p_{24} & p_{25} & p_{26} & p_{27} & p_{28} \\
                      0 & 0 & 0 & p_{34} & p_{35} & p_{36} & p_{37} & p_{38} \\
                      0 & 0 & 0 & 0      & 0      & p_{46} & p_{47} & p_{48} \\
                      0 & 0 & 0 & 0      & 0      & p_{56} & p_{57} & p_{58} \\
                      0 & 0 & 0 & 0      & 0      & 0      & 0      & 0      \\
                      0 & 0 & 0 & 0      & 0      & 0      & 0      & 0      \\
                      0 & 0 & 0 & 0      & 0      & 0      & 0      & 0      \\
                     };
                     \draw(m-1-1.north west) rectangle (m-3-3.south east-|m-1-3.north east);
                   \draw(m-4-4.north west) rectangle (m-5-5.south east-|m-4-5.north east);
                  \draw(m-6-6.north west) rectangle (m-8-8.south east-|m-6-8.north east);
           \end{tikzpicture},
\end{equation*}
where the $p_{ij}$ are homogeneous polynomials in $k[x_1,x_2]$ of degree $d_i-d_j+1$. The block type of the matrix $\mathbf{D}$ is $t=(t_0, t_1, t_2)=(3, 2, 3)$ with $\ColNumber=\RowNumber=3$. Then at least one of the three pairs $(p_{14}, p_{15})$, $(p_{24}, p_{25})$, and $(p_{34}, p_{35})$ must be non-zero. Otherwise, the block type of $\mathbf{D}$ turns into $t=(5,3)$ which implies $l=1$. By Theorem \ref{Theoremrandl} we have $l\geq r$, but this fact contradicts the assumption that $r\geq 2$. Similarly, $p_{14}, p_{24}$, and $p_{34}$ cannot vanish together because it is not possible to have the type $(4,1,3)$ by Lemma \ref{lemmaAvramov}(b). Suppose that $p_{14}\neq 0$. Then $p_{15}$ must be non-zero too. Otherwise, since ${\mathbf{D}}^2=0$, we would have $p_{14}p_{46}=0$, $p_{14}p_{47}=0$, and $p_{14}p_{48}=0$. Since $k[x_1,x_2]$ is $\mathrm{UFD}$, the fourth row of $\mathbf{D}$ would be completely zero. Then we could interchange the fourth row with the fifth row, and the fourth column with the fifth column at the same time. If $p_{25}\neq 0$ or $p_{35}\neq 0$, because of this swap, the type of $\mathbf{D}$ would be $(3,1,4)$ which contradicts Lemma \ref{lemmaAvramov}(b). If $p_{25}$ and $p_{35}$ are both zero, then due to the swap, the type of $\mathbf{D}$ would be $(4,4)$ with $l=1$ which contradicts Theorem \ref{Theoremrandl}. The case $p_{15}\neq 0$ and $ p_{14}=0$ can be proved by a contradiction using  a similar argument.
Hence, $p_{14}\neq 0$ if and only if $p_{15}\neq 0$.
Note that if $p_{14}=p_{15}=0$, then when we consider the fourth column, we know that $p_{24}\neq 0$ or $p_{34}\neq 0$. If $p_{24}\neq 0$ and $p_{34}=0$, we can add the second row to the first row. If $p_{24}= 0$ and $p_{34}\neq 0$, we can add the third row to the first row.
If $p_{24}\neq 0$ and $p_{34}\neq 0$, we can add the second and the third row to the first row. Since the first three columns are already zero, the square of the matrix is still zero. These elementary operations do not change the rank of the matrix $\mathbf{D}$. Therefore, we can set $p_{14}$ to be non-zero, so that $p_{15}$ is also non-zero. Using similar arguments, swapping and adding rows and columns, we can set that the other pairs $(p_{24},p_{25})$, and $(p_{34},p_{35})$ are also completely non-zero.

 Moreover, at least one of the pairs $(p_{46},p_{56})$, $(p_{47},p_{57})$ and $(p_{48},p_{58})$ must be non-zero. Otherwise the block type of $\mathbf{D}$ turns into $(3,5)$ with $l=1$ which contradicts Theorem \ref{Theoremrandl}. Since the matrix $\mathbf{D}$ is a square-zero matrix, we have ${p_{14}p_{46}+p_{15}p_{56}=0}$. We already obtained $p_{14}\neq 0$ and $p_{15}\neq 0$ above. Thus, if either $p_{46}$ or $p_{56}$ is non-zero, the other must also be non-zero. Since $\mathbf{D}^2=0$, this is also valid for the pairs $(p_{47}, p_{57})$ and $(p_{48}, p_{58})$.
 Note that if $p_{46}=p_{56}=0$, then when we consider the fourth row, we know that $p_{47}\neq 0$ or $p_{48}\neq 0$. If $p_{47}\neq 0$ and $p_{48}=0$, we can add the seventh column to the sixth column. If $p_{47}= 0$ and $p_{48}\neq 0$, we can add the eighth column to the sixth column. If $p_{47}\neq 0$ and $p_{48}\neq0$, we can add the seventh and the eighth column to the sixth column. Since the last three rows are zero, the square of the matrix remains zero. These elementary operations do not change the rank of the matrix $\mathbf{D}$. Therefore, we can set  $p_{46}$ to be non-zero, so that $p_{56}$ is also non-zero. Using similar arguments, adding rows and columns, we can set that the other pairs $(p_{47},p_{57})$, and $(p_{48},p_{58})$ are also completely non-zero.

Further suppose that we have the following polynomials that are the greatest common divisor of completely non-zero pairs:
\begin{align*}
a:=\mathrm{gcd}(p_{14},p_{15}), & \qquad p_{14}=\overline{p_{14}}a, \qquad p_{15}=\overline{p_{15}}a,\\
b:=\mathrm{gcd}(p_{24},p_{25}), & \qquad p_{24}=\overline{p_{24}}b, \qquad p_{25}=\overline{p_{25}}b,\\
c:=\mathrm{gcd}(p_{34},p_{35}), & \qquad p_{34}=\overline{p_{34}}c, \qquad p_{35}=\overline{p_{35}}c,\\
d:=\mathrm{gcd}(p_{46},p_{56}), & \qquad p_{46}=\overline{p_{46}}d, \qquad p_{56}=\overline{p_{56}}d, \\
e:=\mathrm{gcd}(p_{47},p_{57}), & \qquad p_{47}=\overline{p_{47}}e, \qquad p_{57}=\overline{p_{57}}e, \\
f:=\mathrm{gcd}(p_{48},p_{58}), & \qquad p_{48}=\overline{p_{58}}f, \qquad p_{58}=\overline{p_{58}}f.
\end{align*}
 By the notation above we get $\overline{p_{14}}a\overline{p_{46}}d+\overline{p_{15}}a\overline{p_{56}}d=0$ which implies that ${\overline{p_{14}}\,\overline{p_{46}}=-\overline{p_{15}}\,\overline{p_{56}}}$. Since $k[x_1,x_2]$ is $\mathrm{UFD}$, and  $\overline{p_{14}}$ and $\overline{p_{15}}$ are relatively prime, $\overline{p_{56}}$ must divide $\overline{p_{14}}$. Therefore, we may write $u\overline{p_{14}}=\overline{p_{56}}$ and $-u\overline{p_{15}}=\overline{p_{46}}$, where $u$ is a unit.
 Similarly, for some units $v$, $w$, $s$ and $t$ we have
$$\mathbf{D}=\left[ \begin{array}{cccccccc}
           0 & 0 & 0 &  \overline{p_{14}}a &  \overline{p_{15}}a & p_{16}              & p_{17}              & p_{18} \\
           0 & 0 & 0 & v\overline{p_{14}}b & v\overline{p_{15}}b & p_{26}              & p_{27}              & p_{28} \\
           0 & 0 & 0 & w\overline{p_{14}}c & w\overline{p_{15}}c & p_{36}              & p_{37}              & p_{38} \\
           0 & 0 & 0 & 0                   & 0                   & -u\overline{p_{15}}d & -s\overline{p_{15}}e & -t\overline{p_{15}}f\\
           0 & 0 & 0 & 0                   & 0                   & u\overline{p_{14}}d & s\overline{p_{14}}e & t\overline{p_{14}}f\\
           0 & 0 & 0 & 0                   & 0                   & 0      & 0      & 0      \\
           0 & 0 & 0 & 0                   & 0                   & 0      & 0      & 0      \\
           0 & 0 & 0 & 0                   & 0                   & 0      & 0      & 0      \\
                    \end{array}
 \right].
$$
Define
$$T:=\left[ \begin{array}{cccc}
                      a  & p_{16} & p_{17} & p_{18} \\
                      vb & p_{26} & p_{27} & p_{28} \\
                      wc & p_{36} & p_{37} & p_{38} \\
                       0 & ud     & se     & tf
                    \end{array}
                    \right].
$$
Then $\det T$ is a homogeneous polynomial. By Theorem \ref{thmPOLY}, there exists  $\gamma\in \mathbb{P}^{1}_k$ such that $\det T (\gamma)=0$. Thus, we have
$$\Rank\left[ \begin{array}{ccccc}
    \overline{p_{14}}a(\gamma)&  \overline{p_{15}}a(\gamma)& p_{16}(\gamma)            & p_{17}(\gamma) & p_{18}(\gamma) \\
   v\overline{p_{14}}b(\gamma)& v\overline{p_{15}}b(\gamma)& p_{26}(\gamma)            & p_{27}(\gamma) & p_{28}(\gamma) \\
   w\overline{p_{14}}c(\gamma)& w\overline{p_{15}}c(\gamma)& p_{36}(\gamma)            & p_{37}(\gamma) & p_{38}(\gamma) \\
             0                &  0                         &-u\overline{p_{15}}d(\gamma)&-s\overline{p_{15}}e(\gamma)&-t\overline{p_{15}}f(\gamma)\\
             0                &  0                         &u\overline{p_{14}}d(\gamma)&s\overline{p_{14}}e(\gamma)&t\overline{p_{14}}f(\gamma)
            \end{array}
 \right]\leq 3 ,
$$
that means the rank of the matrix $\mathbf{D}(\gamma)$ is at most $3$, which is a contradiction.

\end{proof}

\begin{proposition}\label{prop2}
 If $\sigma \leq (16)(23)(45)(78)$ or $\sigma \leq (12)(38)(45)(67)$, then $r<3$.
\end{proposition}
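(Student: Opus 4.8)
The plan is to argue by contradiction. Suppose $r\ge 3$; by Remark \ref{remarkvariable} we may take $r=3$, so that $\mathbb{P}^{r-1}_k=\mathbb{P}^2_k$ and Theorem \ref{thmPOLY} is available for any two nonconstant homogeneous forms. Since $(12)(38)(45)(67)$ is obtained from $(16)(23)(45)(78)$ by reflection in the anti-diagonal --- which interchanges $\mathcal{R}$ and $\mathcal{C}$, sends a block type $(t_0,\dots,t_l)$ to $(t_l,\dots,t_0)$, and preserves conditions (1)--(6) together with the inequality to be proved --- it suffices to rule out $\sigma\le(16)(23)(45)(78)$, the other case following by transporting the whole configuration across the anti-diagonal. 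Following the reduction to maximal elements of $\RP(8)$ recalled above, I may then take $D$ in the block form of the type $(t_0,t_1,t_2,t_3)=(2,2,3,1)$ determined by $(16)(23)(45)(78)$: columns $1,2$ vanish, columns $3,4$ are supported on rows $1,2$, columns $5,6,7$ on rows $1,\dots,4$, and column $8$ on rows $1,\dots,7$; equivalently (Remark \ref{remark2}) $D$ realizes a free flag $F^0=\langle e_1,e_2\rangle\subset F^1=\langle e_1,\dots,e_4\rangle\subset F^2=\langle e_1,\dots,e_7\rangle\subset F^3=k^8$.

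Next I would extract the relations forced by $D^2=0$, namely $p_{13}p_{3j}+p_{14}p_{4j}=0$ and $p_{23}p_{3j}+p_{24}p_{4j}=0$ for $j=5,6,7$, together with the relations obtained from the last column, and then run the gcd factorization of Proposition \ref{prop1}. Writing $a=\gcd(p_{13},p_{14})$, $p_{13}=\overline{p_{13}}\,a$, $p_{14}=\overline{p_{14}}\,a$ with $\gcd(\overline{p_{13}},\overline{p_{14}})=1$, the relations give $p_{3j}=-\overline{p_{14}}\,q_j$, $p_{4j}=\overline{p_{13}}\,q_j$ for suitable forms $q_5,q_6,q_7$, and --- unless all $q_j$ vanish --- also $p_{23}=\overline{p_{13}}\,s$, $p_{24}=\overline{p_{14}}\,s$. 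The structural payoff is that the $(1,2)\times(3,4)$ block and the $(3,4)\times(5,6,7)$ block of $D$ are each of rank at most $1$ over the fraction field, that rows $5,6,7$ of $D$ are $S$-multiples of the single row $e_8^{\top}$, and that the last-column relations bind $(p_{38},p_{48})$ to $(p_{58},p_{68},p_{78})$. Consequently $\operatorname{im}D(\gamma)\subseteq\langle e_1,e_2,\tilde v(\gamma),De_8(\gamma)\rangle$ for every $\gamma$, where $\tilde v=(0,0,-\overline{p_{14}},\overline{p_{13}},0,0,0,0)$; and since $D^2=0$ and $\Rank D(\gamma)=4$ force $\operatorname{im}D(\gamma)=\ker D(\gamma)\supseteq F^0$, the rank can equal $4$ only if $e_1,e_2,\tilde v(\gamma),De_8(\gamma)$ are linearly independent and $e_1,e_2\in\operatorname{im}D(\gamma)$.

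It then remains to produce a $\gamma\in\mathbb{P}^2_k$ with $\Rank D(\gamma)\le 3$, contradicting condition (4). Several triggers are immediate: if $(\overline{p_{13}},\overline{p_{14}})(\gamma)=0$, or $(q_5,q_6,q_7)(\gamma)=0$, or $(p_{58},p_{68},p_{78})(\gamma)=0$, then $\operatorname{im}D(\gamma)$ collapses into $\langle e_1,e_2,De_8(\gamma)\rangle$ --- in the second case columns $5,6,7$ of $D(\gamma)$ lie in $\langle e_1,e_2\rangle$, in the third rows $3$--$7$ of $D(\gamma)$ lie on a line (using a last-column relation when $a(\gamma)\ne0$) --- so $\Rank D(\gamma)\le 3$; and when none of these vanish, unwinding ``$e_1,e_2\in\operatorname{im}D(\gamma)$'' shows that the $2\times 3$ matrix with rows $\bigl(s\,p_{15}-a\,p_{25},\;s\,p_{16}-a\,p_{26},\;s\,p_{17}-a\,p_{27}\bigr)$ and $(q_5,q_6,q_7)$ must have rank $2$ at $\gamma$. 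So one is reduced to finding a point where that $2\times 3$ matrix drops rank; since its three $2\times 2$ minors satisfy a Pl\"ucker syzygy, it is enough to kill two of them, and Theorem \ref{thmPOLY} does this (as $r=3>2$) once they are seen to be nonconstant forms. The main obstacle --- indeed essentially the entire body of the proof --- is the endgame bookkeeping: one must peel off the degenerate branches (the branch ``all $q_j=0$'', which is immediate since then rows $3$--$8$ of $D$ all lie on the line $S\!\cdot\!e_8^{\top}$ and $\Rank D(\gamma)\le 3$ identically; the symmetric branch $p_{13}=p_{14}=0$; one of $a,s,\overline{p_{13}},\overline{p_{14}},q_j$ or one of the chosen minors being a unit or identically zero; and the various vanishing loci being forced to coincide at the produced point), and also treat the block types other than $(2,2,3,1)$ that may occur among matrices with generic orbit $\le(16)(23)(45)(78)$, in each case still extracting a point of rank $\le 3$ from at most two nonconstant homogeneous forms. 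Once a contradiction is obtained in every branch one concludes $r<3$, and $(12)(38)(45)(67)$ follows by the anti-diagonal symmetry.
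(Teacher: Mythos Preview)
Your setup (anti-diagonal symmetry, reduction to $r=3$, block shape $(2,2,3,1)$, gcd factorization of the $(1,2)\times(3,4)$ block and the resulting factorization of the $(3,4)\times(5,6,7)$ block) is correct and matches the paper. But you then make the endgame far harder than necessary: the \emph{first} trigger you list is already the whole proof. Since $\gcd(\overline{p_{13}},\overline{p_{14}})=1$, these are two homogeneous forms, and Theorem~\ref{thmPOLY} with $r=3>2$ produces $\gamma\in\mathbb{P}^2_k$ with $\overline{p_{13}}(\gamma)=\overline{p_{14}}(\gamma)=0$. At this $\gamma$ the $(1,2)\times(3,4)$ block and the $(3,4)\times(5,6,7)$ block of $D$ both vanish (every entry is a multiple of $\overline{p_{13}}$ or of $\overline{p_{14}}$), so rows $3,\dots,7$ of $D(\gamma)$ all lie on the line $k\cdot e_8^{\top}$ while rows $1,2$ contribute at most two more; hence $\Rank D(\gamma)\le 3$, a contradiction. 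That is exactly the paper's argument, in three lines. There is no need to analyse points ``where none of the triggers vanish'' and mount a secondary rank-drop argument --- you get to \emph{choose} $\gamma$, so choose it to hit the first trigger.

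Your proposed secondary argument is in any case broken as stated. A $2\times 3$ matrix has \emph{no} Pl\"ucker relation: the Pl\"ucker map $Gr(2,3)\to\mathbb{P}^{2}$ is an isomorphism, so the three $2\times 2$ minors are algebraically independent and killing two of them does not force the third to vanish (e.g.\ $\bigl(\begin{smallmatrix}1&0&0\\0&0&1\end{smallmatrix}\bigr)$ has $m_{12}=m_{23}=0$ but $m_{13}=1$). What does hold is the column syzygy $m_{23}\,c_1-m_{13}\,c_2+m_{12}\,c_3=0$, but using it at a chosen $\gamma$ only yields the third minor when the relevant column is nonzero there --- which reintroduces exactly the degenerate branches you were trying to peel off. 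Drop the $2\times 3$ analysis and the long case list entirely and target the common zero of $\overline{p_{13}},\overline{p_{14}}$ directly, as the paper does.
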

\noindent Note that the matrix $\mathbf{D}$ corresponding to $\sigma \leq (16)(23)(45)(78)$ is in the form of a square-zero upper triangular block matrix of type $t=(2,2,3,1)$. The anti-diagonal flip of such a matrix is of type $(1,3,2,2)$, which corresponds to
a ${\sigma}' \leq (12)(38)(45)(67)$.
\begin{proof}
Suppose to the contrary that $r\geq3$. By Remark \ref{remarkvariable}, we can take $r=3$. Consider
$$\mathbf{D}=\left[ \begin{array}{cccccccc}
                      0 & 0     & p_{13} & p_{14} & p_{15} & p_{16} & p_{17} & p_{18} \\
                      0 & 0     & p_{23} & p_{24} & p_{25} & p_{26} & p_{27} & p_{28} \\
                      0 & 0     & 0      & 0      & p_{35} & p_{36} & p_{37} & p_{38} \\
                      0 & 0     & 0      & 0      & p_{45} & p_{46} & p_{47} & p_{48} \\
                      0 & 0     & 0      & 0      & 0      & 0      & 0      & p_{58} \\
                      0 & 0     & 0      & 0      & 0      & 0      & 0      & p_{68}  \\
                      0 & 0     & 0      & 0      & 0      & 0      & 0      & p_{78}  \\
                      0 & 0     & 0      & 0      & 0      & 0      & 0      & 0
                    \end{array}
 \right],
$$
where the $p_{ij}$ are homogeneous polynomials in $k[x_1,x_2,x_3]$. At least one of the pairs $(p_{13}, p_{14})$ and $(p_{23}, p_{24})$ must be non-zero. Otherwise, the block type of $\mathbf{D}$ turns into $t=(4, 3, 1)$ which implies $l=2$. By Theorem \ref{Theoremrandl} we have $l\geq r$, but this fact contradicts the assumption that $r\geq 3$. Also, $p_{13}$ and $p_{23}$ cannot vanish together because it is not possible to have the type $(3,1,3,1)$ because of Lemma \ref{lemmaAvramov}(b). Suppose that $p_{13}$ is non-zero. Then $p_{14}$ must be non-zero too. Otherwise, since ${\mathbf{D}}^2=0$, we would have $p_{13}p_{35}=0, p_{13}p_{36}=0$, and $p_{13}p_{37}=0$. Since $k[x_1,x_2, x_3]$ is $\mathrm{UFD}$, $p_{35}=p_{36}=p_{37}=0$.
$$\mathbf{D}=\left[ \begin{array}{cccccccc}
                      0 & 0     & p_{13} & p_{14} & p_{15} & p_{16} & p_{17} & p_{18} \\
                      0 & 0     & p_{23} & p_{24} & p_{25} & p_{26} & p_{27} & p_{28} \\
                      0 & 0     & 0      & 0      & p_{35} & p_{36} & p_{37} & p_{38} \\
                      0 & 0     & 0      & 0      & p_{45} & p_{46} & p_{47} & p_{48} \\
                      0 & 0     & 0      & 0      & 0      & 0      & 0      & p_{58} \\
                      0 & 0     & 0      & 0      & 0      & 0      & 0      & p_{68}  \\
                      0 & 0     & 0      & 0      & 0      & 0      & 0      & p_{78}  \\
                      0 & 0     & 0      & 0      & 0      & 0      & 0      & 0
                    \end{array}
 \right]\rightarrow \left[ \begin{array}{cccccccc}
                      0 & 0     & p_{13} & 0      & p_{15} & p_{16} & p_{17} & p_{18} \\
                      0 & 0     & p_{23} & p_{24} & p_{25} & p_{26} & p_{27} & p_{28} \\
                      0 & 0     & 0      & 0      & 0      & 0      & 0      & p_{38} \\
                      0 & 0     & 0      & 0      & p_{45} & p_{46} & p_{47} & p_{48} \\
                      0 & 0     & 0      & 0      & 0      & 0      & 0      & p_{58} \\
                      0 & 0     & 0      & 0      & 0      & 0      & 0      & p_{68}  \\
                      0 & 0     & 0      & 0      & 0      & 0      & 0      & p_{78}  \\
                      0 & 0     & 0      & 0      & 0      & 0      & 0      & 0
                    \end{array}
 \right].$$
 Then we could interchange the third row with the fourth row, and the third column with the fourth column. Due to this swap, if $p_{24}\neq 0$, then the type would be $(2,1,4,1)$ which contradicts Lemma \ref{lemmaAvramov}(b). If $p_{24}=0$, then the type would be $(3,4,1)$ with $l=2$ which contradicts Theorem \ref{Theoremrandl}. The case that $p_{14}\neq 0$ implies $p_{13}\neq 0$ can be proven using a similar argument.

 Note that if $p_{13}=p_{14}=0$, then $p_{23}\neq 0$ or $p_{24}\neq 0$. Then we can add the second row to the first row. Since the first two columns are already zero, the square of the matrix is still zero. This elementary operation do not change the rank of the matrix $\mathbf{D}$. Therefore, we can set $p_{13}$ to be non-zero so that $p_{14}$ is also non-zero.

  Actually, if $p_{23}\neq 0$, then $p_{24}\neq 0$. Otherwise, since ${\mathbf{D}}^2=0$, we would have $p_{23}p_{35}=0, p_{23}p_{36}=0$, and $p_{23}p_{37}=0$.  Since $k[x_1,x_2,x_3]$ is $\mathrm{UFD}$, $p_{35}=p_{36}=p_{37}=0$. Then we could interchange the third row with the fourth row, and the third column with the fourth column at the same time. Then the type of $\mathbf{D}$ would be $(2,1,4,1)$ which contradicts Lemma \ref{lemmaAvramov}(b). Similarly, if $p_{24}\neq 0$, then $p_{23}\neq 0$. Thus, we have $p_{23}\neq 0$ if and only if $p_{24}\neq 0$. Note that if $p_{23}= p_{24}=0$, then we can add the first row to the second row without changing the rank of $\mathbf{D}$ and the fact that ${\mathbf{D}}^2=0$.

   We have $p_{13}\neq 0$, $p_{14}\neq 0$, $p_{23}\neq 0$ and $p_{24}\neq 0$. In addition, all of the pairs $(p_{35}, p_{45})$, $(p_{36}, p_{46})$ and $(p_{37}, p_{47})$ cannot vanish because the type cannot be $(2,5,1)$ due to Theorem \ref{Theoremrandl}. Suppose that $p_{35}\neq 0$. Since ${\mathbf{D}}^2=0$, we have $p_{45}\neq 0$, and vice versa. This is also true for the pairs $(p_{36}, p_{46})$ and $(p_{37}, p_{47})$. Note that if $p_{35}= p_{45}=0$, then when we consider the third row, we know that $p_{36}\neq 0$ or $p_{37}\neq 0$. If $p_{36}\neq 0$ and $p_{37}=0$, then we can add the sixth column to the fifth column and subtract the fifth row from the sixth row.
   If $p_{36}=0$ and $p_{37}\neq 0$, then we can add the seventh column to the fifth column and subtract the fifth row from the seventh row.
   If $p_{36}\neq 0$ and $p_{37}\neq 0$, then we can add the sixth and the seventh column to the fifth column and subtract the fifth row from the sixth row and the seventh row. Since these operations do not change the rank of the matrix $\mathbf{D}$ and preserve $\mathbf{D}^2=0$, we can set $p_{35}\neq 0$ so that $p_{45}\neq 0$. Using similar arguments, we can set that the other pairs $(p_{36}, p_{46})$ and $(p_{37}, p_{47})$ are also completely non-zero.
Suppose that
\begin{align*}
a:= \mathrm{gcd}(p_{13},p_{14}), & \qquad p_{13}=\overline{p_{13}}a, \qquad p_{14}=\overline{p_{14}}a,\\
b:= \mathrm{gcd}(p_{23},p_{24}), & \qquad p_{23}=\overline{p_{23}}b, \qquad p_{24}=\overline{p_{24}}b,\\
c:= \mathrm{gcd}(p_{35},p_{45}), & \qquad p_{35}=\overline{p_{35}}c, \qquad p_{45}=\overline{p_{45}}c,\\
d:= \mathrm{gcd}(p_{36},p_{46}), & \qquad p_{36}=\overline{p_{36}}d, \qquad p_{46}=\overline{p_{46}}d, \\
\textrm{and }e:= \mathrm{gcd}(p_{37},p_{47}), & \qquad p_{37}=\overline{p_{37}}e, \qquad p_{47}=\overline{p_{47}}e.
\end{align*}
Since $\mathbf{D}^2=0$, we have $p_{13}p_{35}+p_{14}p_{45}=0$, which implies that
$\overline{p_{13}}\,\overline{p_{35}}=-\overline{p_{14}}\,\overline{p_{45}}$. Since $k[x_1,x_2,x_3]$ is $\mathrm{UFD}$ and  $\overline{p_{13}}$ and $\overline{p_{14}}$ are relatively prime, $\overline{p_{45}}$ must divide $\overline{p_{13}}$. Therefore, we may write
$u\overline{p_{13}}=\overline{p_{45}}$ and $-u\overline{p_{14}}=\overline{p_{35}}$ where $u$ is a unit.
 Similarly, for some units $v$, $w$ and $s$  we have
$$\mathbf{D}=\left[ \begin{array}{cccccccc}
0 & 0 &  \overline{p_{13}}a &  \overline{p_{14}}a & p_{15} & p_{16}   & p_{17}              & p_{18} \\
0 & 0 & v\overline{p_{13}}b & v\overline{p_{14}}b & p_{25} & p_{26}   & p_{27}              & p_{28} \\
0 & 0 & 0 & 0 &-uc\overline{p_{14}}& -wd\overline{p_{14}} &  -se\overline{p_{14}}              & p_{38}\\
0 & 0 & 0 & 0 &uc\overline{p_{13}}& wd\overline{p_{13}} &  se\overline{p_{13}}              & p_{48}\\
0 & 0 & 0 & 0                   & 0                   & 0      & 0      & p_{58}\\
0 & 0 & 0 & 0                   & 0                   & 0      & 0      & p_{68}      \\
0 & 0 & 0 & 0                   & 0                   & 0      & 0      & p_{78}      \\
0 & 0 & 0 & 0                   & 0                   & 0      & 0      & 0      \\
                    \end{array}
 \right].
$$
Let $\gamma\in \mathbb{P}^{2}_k$ be a root of $\overline{p_{13}}$ and $\overline{p_{14}}$, then the rank of the matrix $\mathbf{D}(\gamma)$ is at most $3$, which is a contradiction.
\end{proof}
\begin{proposition}\label{prop3}
If $\sigma \leq (12)(34)(58)(67)$ or $\sigma \leq (14)(23)(56)(78)$, then $r<3$.
\end{proposition}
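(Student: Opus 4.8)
The plan is a proof by contradiction, in the spirit of Propositions \ref{prop1} and \ref{prop2}. By Remark \ref{remarkvariable} it is enough to exclude $r=3$, so I assume $r=3$. The two involutions are interchanged by the anti-diagonal flip, which carries a matrix of type $(t_0,\dots,t_l)$ to one of type $(t_l,\dots,t_0)$ and preserves all six hypotheses, so it suffices to treat $\sigma\le(12)(34)(58)(67)$. A matrix $D$ whose Borel orbit lies below $(12)(34)(58)(67)$ has the block shape of type $(1,2,3,2)$: column $1$ and rows $7,8$ vanish, rows $2$ and $3$ are supported in columns $4$--$8$, and rows $4,5,6$ are supported only in columns $7,8$. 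Expanding $D^2=0$ at the positions $(1,4),(1,5),(1,6)$ gives the relations $p_{12}p_{2j}+p_{13}p_{3j}=0$ for $j=4,5,6$, and at the positions $(2,7),(2,8),(3,7),(3,8)$ one gets e.g.\ $p_{24}p_{47}+p_{25}p_{57}+p_{26}p_{67}=0$.

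The main case is treated as follows. Put $g=\gcd(p_{12},p_{13})$ and write $p_{12}=g\,\overline{p_{12}}$, $p_{13}=g\,\overline{p_{13}}$ with $\overline{p_{12}},\overline{p_{13}}$ coprime. The displayed relations become $\overline{p_{12}}\,p_{2j}=-\overline{p_{13}}\,p_{3j}$, so by coprimality $\overline{p_{13}}\mid p_{2j}$ and $\overline{p_{12}}\mid p_{3j}$, and writing $p_{2j}=\lambda_j\overline{p_{13}}$ we obtain $p_{3j}=-\lambda_j\overline{p_{12}}$ for $j=4,5,6$. Assume first that $\overline{p_{12}}$ and $\overline{p_{13}}$ are both non-constant. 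Then Theorem \ref{thmPOLY}, applied to the two polynomials $\overline{p_{12}},\overline{p_{13}}$ (legitimate since $r=3>2$), produces $\gamma\in\mathbb{P}^{r-1}_k$ with $\overline{p_{12}}(\gamma)=\overline{p_{13}}(\gamma)=0$. At such $\gamma$ we have $p_{12}(\gamma)=p_{13}(\gamma)=0$, so columns $2,3$ of $D(\gamma)$ vanish, and $p_{2j}(\gamma)=p_{3j}(\gamma)=0$ for $j=4,5,6$, so rows $2,3$ of $D(\gamma)$ vanish in columns $4,5,6$. Hence columns $4,5,6$ of $D(\gamma)$ are scalar multiples of $e_1$ and contribute rank at most $1$, columns $7,8$ contribute rank at most $2$, and columns $1,2,3$ contribute nothing, so $\Rank D(\gamma)\le 3$, contradicting hypothesis (4).

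It remains to dispose of the degenerate possibility that $\overline{p_{12}}$ or $\overline{p_{13}}$ is a non-zero constant — equivalently, after the flag-preserving conjugation interchanging $e_2$ and $e_3$, that $p_{12}=0$ or $p_{12}\mid p_{13}$. If $p_{12}\mid p_{13}$, say $p_{13}=p_{12}h$, a further flag-preserving conjugation (by $I-hE_{23}$, followed by the swap of $e_2,e_3$) reduces this to $p_{12}=0$ with columns $1,2$ of $D$ both zero. In that situation $p_{12}=p_{13}=0$ is impossible, since the associated free flag would have $d(F^1)=0\subseteq F^{-1}$, contradicting Lemma \ref{lemmaAvramov}(a); so $p_{13}\neq 0$, and then $D^2=0$ forces $p_{3j}=0$ for $j=4,5,6$, so rows $3,4,5,6$ are all supported in columns $7,8$ and columns $3,4,5,6$ all lie in $\langle e_1,e_2\rangle$. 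Consequently $\Rank D=4$ forces $(p_{24},p_{25},p_{26})$ to have no projective zero, and one concludes by applying Theorem \ref{thmPOLY} to $\gcd(p_{24},p_{25},p_{26})$ (a single polynomial; or to two of $p_{24},p_{25},p_{26}$ when one of them vanishes identically — all three vanishing identically already gives $\Rank D\le 3$), obtaining a $\gamma$ at which the columns in $\langle e_1,e_2\rangle$ collapse to multiples of $e_1$ and the rank drops to $3$.

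The step I expect to be the main obstacle is precisely this degenerate analysis: once the gcd-normalised entries become constants one must track exactly which further entries $D^2=0$ annihilates and verify, sub-case by sub-case, that a common zero of at most two non-constant homogeneous polynomials still forces $\Rank D(\gamma)<4$ — in the worst sub-cases this seems to require also the relation $p_{24}p_{47}+p_{25}p_{57}+p_{26}p_{67}=0$, a normalisation of the $3\times 2$ block $\big((p_{i7},p_{i8})\big)_{i=4,5,6}$ by flag-preserving conjugations, and possibly a dimension count on $\mathbb{P}^2$ to locate a $\gamma$ on $\{p_{13}=0\}$ where that block has full rank $2$. The bookkeeping, rather than any single hard idea, is the delicate part.
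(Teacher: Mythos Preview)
Your main-case argument is correct and coincides with the paper's proof: write $p_{12}=a\,\overline{p_{12}}$, $p_{13}=a\,\overline{p_{13}}$ with $a=\gcd(p_{12},p_{13})$, use $D^2=0$ to see that the block $(p_{ij})_{i=2,3;\,j=4,5,6}$ consists of multiples of $\overline{p_{13}}$ and $\overline{p_{12}}$, and then apply Theorem~\ref{thmPOLY} to the pair $\overline{p_{12}},\overline{p_{13}}$ to obtain $\gamma\in\mathbb P^2$ with $\Rank D(\gamma)\le 3$. The paper stops there and does not comment on the possibility that one of $\overline{p_{12}},\overline{p_{13}}$ is a unit, so you are right to isolate this degenerate case.

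Your treatment of the degenerate case, however, does not close. After reducing to $p_{12}=0$, $p_{13}\ne 0$, $p_{3j}=0$ for $j=4,5,6$, you correctly note that columns $3,4,5,6$ lie in $\langle e_1,e_2\rangle$ and that a common projective zero of $(p_{24},p_{25},p_{26})$ would drop the rank. You then try to manufacture such a zero by applying Theorem~\ref{thmPOLY} to $g:=\gcd(p_{24},p_{25},p_{26})$. A zero of $g$ is indeed a common zero of all three, so this succeeds when $g$ is nonconstant; what you do not cover is the case where $g$ is a unit and none of the three is identically zero. For example $p_{24}=x_1$, $p_{25}=x_2$, $p_{26}=x_3$ are pairwise coprime and have no common zero in $\mathbb P^2$, so no $\gamma$ makes columns $4,5,6$ collapse into $\langle e_1\rangle$, and your rank argument stalls. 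The extra relations from $D^2=0$ you allude to do not obviously fill this gap.

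The clean fix is the one the paper uses for type $(2,4,2)$ in Proposition~\ref{prop9}. In your degenerate situation the matrix has acquired a strictly shorter flag: with $p_{12}=0$ and $p_{34}=p_{35}=p_{36}=0$, the chain
\[
0\subseteq\langle e_1,e_2\rangle\subseteq\langle e_1,\dots,e_6\rangle\subseteq F
\]
is a free flag, so $\Freeclass_S F\le 2$, and Theorem~\ref{Theoremrandl} gives $2\ge r=3$, a contradiction --- no pointwise rank computation needed. This also clarifies your appeal to Lemma~\ref{lemmaAvramov}(a) for ``$p_{12}=p_{13}=0$ impossible'': that lemma is stated for a flag realising the free class, and what you are really using is that a shorter flag contradicts $\Freeclass_S F\ge r$.
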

\noindent These cases are symmetric, so it is enough to prove the case $\sigma \leq (12)(34)(58)(67)$.
\begin{proof}
Suppose to the contrary that $r\geq3$. By Remark \ref{remarkvariable}, we can take $r=3$. Consider
$$\mathbf{D}=\left[ \begin{array}{cccccccc}
                      0 & p_{12} & p_{13} & p_{14} & p_{15} & p_{16} & p_{17} & p_{18} \\
                      0 & 0      & 0      & p_{24} & p_{25} & p_{26} & p_{27} & p_{28} \\
                      0 & 0      & 0      & p_{34} & p_{35} & p_{36} & p_{37} & p_{38} \\
                      0 & 0      & 0      & 0      & 0      & 0      & p_{47} & p_{48} \\
                      0 & 0      & 0      & 0      & 0      & 0      & p_{57} & p_{58} \\
                      0 & 0      & 0      & 0      & 0      & 0      & p_{67} & p_{68} \\
                      0 & 0      & 0      & 0      & 0      & 0      & 0      & 0      \\
                      0 & 0      & 0      & 0      & 0      & 0      & 0      & 0      \\
                    \end{array}
 \right].
$$

 The block type of the matrix $\mathbf{D}$ is $t=(1,2,3,2)$. Note that $p_{12}$ and $p_{13}$ cannot vanish together. Otherwise, the block type of $\mathbf{D}$ turns into $t=(3, 3, 2)$ which implies $l=2$. By Theorem \ref{Theoremrandl} we know that $l\geq r$, so we have a contradiction. If $p_{12}\neq 0$, then $p_{13}$ must be non-zero too. Otherwise, $\mathbf{D}^2=0$ implies that $p_{24}=p_{25}=p_{26}=0$. We could interchange the second row with the third row, and the second column with the third column. Then the type would be $(2,4,2)$ which contradicts Theorem \ref{Theoremrandl}. Clearly, if $p_{13}\neq 0$, then $p_{12}\neq 0$. Therefore we have $p_{12}\neq 0$ and $p_{13}\neq 0$. At least one of the pairs $(p_{24},p_{34})$, $(p_{25},p_{35})$ and $(p_{26},p_{36})$ must be non-zero. Otherwise, the type of $\mathbf{D}$ would be $(1,5,2)$ which contradicts Theorem \ref{Theoremrandl}. Suppose that $p_{24}\neq 0$. Then  $p_{12}p_{24}+p_{13}p_{34}=0$ implies that $p_{34}\neq 0$. Similarly, if $p_{34}\neq 0$, then $p_{24}\neq 0$. Thus, $p_{24}\neq 0$ if and only if $p_{34}\neq 0$. Actually, one could make a swap so that the pairs $(p_{25},p_{35})$ and $(p_{26},p_{36})$ play the same role of the non-zero pair $(p_{24},p_{34})$ from the argument before. If $p_{24}=p_{34}=0$, then when we consider the second row, we know that $p_{25}\neq 0$ or $p_{26}\neq 0$. If $p_{25}\neq 0$ and $p_{26}=0$, we can add the fifth column to the fourth column and subtract the fourth row from the fifth row. If $p_{25}= 0$ and $p_{26}\neq 0$, we can add the sixth column to the fourth column and subtract the fourth row from the sixth row. If $p_{25}\neq 0$ and $p_{26}\neq 0$, we can add the fifth and sixth column to the fourth column and subtract the fourth row from the fifth and the sixth row. Since these operations do not change the rank of the matrix $\mathbf{D}$ and preserve $\mathbf{D}^2=0$, we can set $p_{24}\neq 0$, so that $p_{34}\neq 0$. Using similar arguments, we can set that the other pairs $(p_{25}, p_{35})$ and $(p_{26}, p_{36})$ are also completely non-zero.


Suppose that
\begin{align*}
a:= \mathrm{gcd}(p_{12},p_{13}), & \qquad p_{12}=\overline{p_{12}}a, \qquad p_{13}=\overline{p_{13}}a,\\
b:= \mathrm{gcd}(p_{24},p_{34}), & \qquad p_{24}=\overline{p_{24}}b, \qquad p_{34}=\overline{p_{34}}b,\\
c:= \mathrm{gcd}(p_{25},p_{35}), & \qquad p_{25}=\overline{p_{25}}c, \qquad p_{35}=\overline{p_{35}}c,\\
\textrm{and }d:= \mathrm{gcd}(p_{26},p_{36}), & \qquad p_{26}=\overline{p_{26}}d, \qquad p_{36}=\overline{p_{36}}d.
\end{align*}
 For some units $u$, $v$ and $w$, we have
$$\mathbf{D}=\left[ \begin{array}{cccccccc}
 0 &\overline{p_{12}}a& \overline{p_{13}}a & p_{14} & p_{15} & p_{16} & p_{17} & p_{18} \\
 0 &0                 & 0      & -u\overline{p_{13}}b & -v\overline{p_{13}}c &-w\overline{p_{13}}d & p_{27} & p_{28} \\
 0 &0                 & 0      & u\overline{p_{12}}b  & v\overline{p_{12}}c  & w\overline{p_{12}}d & p_{37} & p_{38} \\
 0 &0                 & 0      & 0      & 0      & 0      & p_{47} & p_{48} \\
 0 &0                 & 0      & 0      & 0      & 0      & p_{57} & p_{58} \\
 0 &0                 & 0      & 0      & 0      & 0      & p_{67} & p_{68} \\
 0 &0                 & 0      & 0      & 0      & 0      & 0      & 0      \\
 0 &0                 & 0      & 0      & 0      & 0      & 0      & 0      \\
                    \end{array}
 \right].
$$
By Theorem \ref{thmPOLY} there exists  $\gamma \in  \mathbb{P}^{2}_k$ such that $\overline{p_{12}}(\gamma)=0$ and $\overline{p_{13}}(\gamma)=0$. Then the rank of matrix $\mathbf{D}(\gamma)$ can be at most $3$. This is a contradiction.
\end{proof}
\begin{proposition}\label{prop4}
 If $\sigma \leq (18)(27)(36)(45)$, then $r<2$.
\end{proposition}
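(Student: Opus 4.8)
The plan is to exploit that the involution $(18)(27)(36)(45)$ has block type $(4,4)$. Its partial permutation matrix — and, since moves of type \RN{3} alter neither $\ColNumber$ nor $\RowNumber$, the matrix attached to every $\sigma\leq(18)(27)(36)(45)$ — vanishes outside its top‑right $4\times4$ corner, so a matrix $D$ meeting conditions (1),(5),(6) of Theorem \ref{MainTheorem} with this support has the form
\[
D=\begin{bmatrix} 0 & A \\ 0 & 0 \end{bmatrix},\qquad A=(p_{i,\,j+4})_{1\le i,j\le 4}\in\Mat_{4\times4}(S),
\]
the displayed $0$'s being $4\times4$ zero blocks. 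For such a $D$, conditions (1) and (2) hold automatically, and $\Rank D(a)=\Rank A(a)$ for every $a\in k^{r}$. Assume for contradiction that $r\geq 2$ (by Remark \ref{remarkvariable} one could even restrict to $r=2$).

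The crux is that $\det A\in S$ is a \emph{nonconstant} homogeneous polynomial, to which Theorem \ref{thmPOLY} then applies. It is homogeneous because the array is graded: by condition (5) the entry $p_{i,\,j+4}$ has degree $d_i-d_{j+4}+1$, so for every $\tau\in\symgroup(4)$ the diagonal product $\prod_{i=1}^{4}p_{i,\,\tau(i)+4}$ has degree $\sum_{i=1}^{4}d_i-\sum_{j=5}^{8}d_j+4$, which does not depend on $\tau$; hence $\det A$ is homogeneous of that degree. It is not identically zero, because condition (4) forces $A(a)\in\GL_4(k)$, i.e.\ $\det A(a)\neq 0$, for every $a\in k^{r}\setminus\{0\}$. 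And it is nonconstant: each monomial of $\det A=\sum_{\tau}\pm\prod_i p_{i,\,\tau(i)+4}$ is a product of entries $p_{ij}$, every one of which has zero constant term by condition (3), so $\det A$ has zero constant term — and a nonzero homogeneous polynomial with vanishing constant term has positive degree.

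Now apply Theorem \ref{thmPOLY} to the single polynomial $f_1=\det A$: as $r\geq 2>1$, there is $\gamma\in\mathbb{P}^{r-1}_k$ with $\det A(\gamma)=0$. Choosing a representative $a\in k^{r}\setminus\{0\}$ of $\gamma$ gives $\Rank D(a)=\Rank A(a)<4$, contradicting condition (4). Therefore $r<2$; equivalently, since here $\RowNumber=\ColNumber=4$, the inequality $8\geq 2^{r-1}(\RowNumber+\ColNumber)$ forces $r\leq 1$.

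I do not anticipate a genuine obstacle in this proposition — it is the most degenerate of the ten cases, a single $4\times4$ block with $D^2=0$ for free. An alternative route is Theorem \ref{Theoremrandl}: the free flag $0\subseteq F^0\subseteq F^1=F$ carried by $D$ has $l=1$, while $\Homology(F)$ is finite (supported at the origin, by condition (4)) and nonzero (by condition (3) the differential on $F\otimes_S k$ is zero, which is incompatible with $\Homology(F)=0$), whence $1=l\geq r$. The only point warranting a line of care is checking that $\det A$ is nonconstant rather than a nonzero scalar, which condition (3) precisely excludes.
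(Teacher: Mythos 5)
Your proof is correct and is essentially the paper's own argument: the paper's one-line proof for this case is precisely to take a root of the minor $m^{1234}_{5678}(D)=\det A$ and derive the rank contradiction with condition (4), which you carry out in full detail (homogeneity, nonvanishing, nonconstancy) before invoking Theorem \ref{thmPOLY}. Your closing remark via Theorem \ref{Theoremrandl} with $l=1$ is a valid alternative, but the main route matches the paper.
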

\begin{proof} Suppose not. By Remark \ref{remarkvariable}, we can take $r=2$. Consider
$$\mathbf{D}=\left[ \begin{array}{cccccccc}
                      0 & 0 & 0 & 0 & p_{15} & p_{16} & p_{17} & p_{18} \\
                      0 & 0 & 0 & 0 & p_{25} & p_{26} & p_{27} & p_{28} \\
                      0 & 0 & 0 & 0 & p_{35} & p_{36} & p_{37} & p_{38} \\
                      0 & 0 & 0 & 0 & p_{45} & p_{46} & p_{47} & p_{48} \\
                      0 & 0 & 0 & 0 & 0      & 0      & 0      & 0 \\
                      0 & 0 & 0 & 0 & 0      & 0      & 0      & 0      \\
                      0 & 0 & 0 & 0 & 0      & 0      & 0      & 0      \\
                      0 & 0 & 0 & 0 & 0      & 0      & 0      & 0      \\
                    \end{array}
 \right].
$$
It is enough to consider a root of the minor $m^{1234}_{5678}(\mathbf{D})$ to prove this case.
\end{proof}
\begin{proposition}\label{prop5}
If $\sigma \leq (12)(34)(56)(78)$, then $r<4$.
\end{proposition}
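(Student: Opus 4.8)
Following the pattern of Propositions~\ref{prop1}--\ref{prop4}, I would argue by contradiction. Assume $r\ge 4$; by Remark~\ref{remarkvariable} it suffices to treat $r=4$. The hypothesis $\sigma\le(12)(34)(56)(78)$ puts $D$ into the shape of a nilpotent upper triangular block matrix of type $(1,2,2,2,1)$ over $S=k[x_1,x_2,x_3,x_4]$; writing its (potentially) nonzero blocks as $A_{01}$ (size $1\times2$), $A_{02},A_{03}$ ($1\times2$), $A_{04}$ ($1\times1$), $A_{12},A_{13}$ ($2\times2$), $A_{14}$ ($2\times1$), $A_{23}$ ($2\times2$), $A_{24},A_{34}$ ($2\times1$),
\[
D=\begin{pmatrix}0&A_{01}&A_{02}&A_{03}&A_{04}\\0&0&A_{12}&A_{13}&A_{14}\\0&0&0&A_{23}&A_{24}\\0&0&0&0&A_{34}\\0&0&0&0&0\end{pmatrix},
\]
and $D^{2}=0$ unwinds to $A_{01}A_{12}=0$, $A_{12}A_{23}=0$, $A_{23}A_{34}=0$, $A_{01}A_{13}+A_{02}A_{23}=0$, $A_{12}A_{24}+A_{13}A_{34}=0$ and $A_{01}A_{14}+A_{02}A_{24}+A_{03}A_{34}=0$. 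Condition~(4) gives $\Rank D(\gamma)=4$ for every $\gamma\ne0$, while $\Rank D(0)=0$ by~(3), so the dg-$S$-module $(S^{8},D)$ has nonzero finite-length homology. Theorem~\ref{Theoremrandl} gives $\Freeclass_{S}(S^{8},D)\ge r=4$, while the type-$(1,2,2,2,1)$ flag and the last sentence of Lemma~\ref{lemmaAvramov} give $\Freeclass_{S}(S^{8},D)\le 4$; hence equality holds and Lemma~\ref{lemmaAvramov}(a), applied to that flag, yields $A_{01},A_{12},A_{23},A_{34}\ne 0$.

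Next I would run the gcd-maneuver of Propositions~\ref{prop1}--\ref{prop3}. From $A_{01}A_{12}=0$ with $A_{01}\ne0$ and $S$ a domain we get $\det A_{12}=0$; likewise $\det A_{23}=0$ from $A_{12}A_{23}=0$ with $A_{12},A_{23}\ne0$. A $2\times2$ matrix over the UFD $S$ with vanishing determinant is a column times a row, so $A_{12}=\mathbf u\,\mathbf v^{\top}$ and $A_{23}=\mathbf p\,\mathbf q^{\top}$ with $\mathbf u$ and $\mathbf p$ having coprime entries. Feeding these back into $D^{2}=0$ gives the orthogonality relations $A_{01}\mathbf u=0$, $\mathbf v^{\top}\mathbf p=0$ and $\mathbf q^{\top}A_{34}=0$; in particular $A_{01}$ is a polynomial multiple of the rotation of $\mathbf u$, and $\mathbf v=h\cdot\mathbf p^{\star}$ for some homogeneous $h$, where $\mathbf p^{\star}$ is the rotation of $\mathbf p$. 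Condition~(5) (with $d$ nonincreasing) keeps track of all degrees; e.g.\ it forces $\det A_{13}$ to be either $0$ or of degree $\ge 2$, so it is never a nonzero constant, and similarly rules out degenerate unit-factors.

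The crux is to produce $\gamma\in\mathbb P^{3}_{k}$ with $\Rank D(\gamma)<4$. Using Theorem~\ref{thmPOLY} (legitimate since $r=4$ exceeds the number of polynomials), take $\gamma$ a common zero of the two entries of $\mathbf p$ — and, if no such point already has $A_{34}(\gamma)\ne0$, of $\det A_{13}$ as well. At such a $\gamma$ one has $A_{23}(\gamma)=\mathbf p(\gamma)\mathbf q(\gamma)^{\top}=0$ and, since $\mathbf v=h\,\mathbf p^{\star}$, also $\mathbf v(\gamma)=0$, hence $A_{12}(\gamma)=0$; the fourth and fifth relations of $D^{2}=0$ then collapse to $A_{01}(\gamma)A_{13}(\gamma)=0$ and $A_{13}(\gamma)A_{34}(\gamma)=0$. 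In either branch $A_{13}(\gamma)$ has rank $\le1$ — because $A_{34}(\gamma)\ne0$ lies in its kernel, or because $\det A_{13}(\gamma)=0$ — so the two columns of $A_{13}(\gamma)$ are proportional, spanning at most a line $\langle w\rangle$ inside the row-block $\{2,3\}$. Reading off the columns of $D(\gamma)$: columns $1$--$5$ lie in $\langle e_{1}\rangle$ (using $A_{12}(\gamma)=0$ and $p_{23}(\gamma)=p_{45}(\gamma)=0$), columns $6,7$ lie in $\langle e_{1}\rangle\oplus\langle w\rangle$ (using $A_{23}(\gamma)=0$ and $\Rank A_{13}(\gamma)\le1$), and column $8$ is a single vector; therefore $\Rank D(\gamma)\le 3$, contradicting~(4).

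The step I expect to be the genuine obstacle is exactly this last one. Unlike the shorter propositions, here the block matrix has five layers, so killing a single sub-diagonal block does not drop the rank; one must force $A_{12}(\gamma)=A_{23}(\gamma)=0$ \emph{and} $\Rank A_{13}(\gamma)\le1$ simultaneously, and extracting all three from the vanishing of two polynomials is precisely what the orthogonality chain ($\mathbf v=h\,\mathbf p^{\star}$ and $A_{01}\mathbf u=0$) buys. The subsidiary difficulty is bookkeeping the degenerate configurations — $h$ a unit, $\mathbf p$ or $\mathbf q$ having a unit entry, or the "coprime" columns $\mathbf u,\mathbf p$ acquiring a common projective zero once $r\ge3$ — each of which must be rerouted (replacing the offending factor in the choice of $\gamma$) or excluded by a degree count resting on condition~(5) and on $p_{ij}(0,\dots,0)=0$; as in Proposition~\ref{prop1}, this bookkeeping can alternatively be packaged into a single $\le4\times4$ test matrix $T$ whose determinant one takes a root of.
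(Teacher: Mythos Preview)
Your approach is essentially the paper's: both factor $A_{12}$ and $A_{23}$ as rank-one matrices linked through $A_{12}A_{23}=0$, then invoke Theorem~\ref{thmPOLY} on the two coprime linking factors together with $\det A_{13}=m^{23}_{67}(D)$ to produce $\gamma\in\mathbb P^{3}_{k}$ at which $A_{12}(\gamma)=A_{23}(\gamma)=0$ and $\Rank A_{13}(\gamma)\le 1$, whence $\Rank D(\gamma)\le 3$. The paper is more direct in two places: it never routes through Theorem~\ref{Theoremrandl} and Lemma~\ref{lemmaAvramov} to secure $A_{01},A_{12},A_{23},A_{34}\ne 0$ (the gcd bookkeeping makes the needed nonvanishing implicit), and it skips your branching on $A_{34}(\gamma)$ by simply taking $\gamma$ to be a common zero of $\overline{p_{24}},\overline{p_{25}},m^{23}_{67}(D)$ from the outset.
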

\begin{proof}
Suppose to the contrary that $r\geq4$. By Remark \ref{remarkvariable}, we may take $r=4$. Then we have
$$\mathbf{D}=\left[ \begin{array}{cccccccc}
                      0 & p_{12} & p_{13} & p_{14} & p_{15} & p_{16} & p_{17} & p_{18} \\
                      0 & 0      & 0      & p_{24} & p_{25} & p_{26} & p_{27} & p_{28} \\
                      0 & 0      & 0      & p_{34} & p_{35} & p_{36} & p_{37} & p_{38} \\
                      0 & 0      & 0      & 0      & 0      & p_{46} & p_{47} & p_{48} \\
                      0 & 0      & 0      & 0      & 0      & p_{56} & p_{57} & p_{58} \\
                      0 & 0      & 0      & 0      & 0      & 0      & 0      & p_{68} \\
                      0 & 0      & 0      & 0      & 0      & 0      & 0      & p_{78} \\
                      0 & 0      & 0      & 0      & 0      & 0      & 0      & 0      \\
                    \end{array}
 \right],
$$
where $p_{ij}$'s are homogeneous polynomials in $k[x_1,x_2,x_3,x_4]$.

The block type of the matrix $\mathbf{D}$ is $(1,2,2,2,1)$ with $l=4$. Then $p_{12}$ and $p_{13}$ cannot vanish together.
 Otherwise, the block type $\mathbf{D}$ would be $(3,2,2,1)$  with $l=3$ which contradicts Theorem \ref{Theoremrandl}.  If $p_{12}\neq 0$, then $p_{13}\neq 0$. Otherwise, the block type $\mathbf{D}$ turns into $(2,1,2,2,1)$ which contradicts Lemma \ref{lemmaAvramov}(b). If $p_{13}\neq 0$, then  $p_{12}\neq0$. Otherwise $\mathbf{D}^2=0$ implies that $p_{34}=p_{35}=0$. Then the type would be $(2,3,2,1)$ which contradicts Theorem \ref{Theoremrandl}. Thus, $p_{12}\neq 0$ and $p_{13}\neq0$.

  Also, at least one of the polynomials $p_{24}$ and $p_{34}$ must be non-zero because the type cannot be $(1,3,1,2,1)$ due to Lemma \ref{lemmaAvramov}(b). Suppose that $p_{24}\neq 0$. Then $p_{25}\neq 0$; otherwise $\mathbf{D}^2=0$ would imply that $p_{24}p_{46}=0$ and $p_{24}p_{47}=0$. Since $k[x_1,x_2, x_3, x_4]$ is $\mathrm{UFD}$, $p_{46}=p_{47}=0$. We could interchange the fourth row with the fifth row, and the fourth column with the fifth column. If $p_{35}\neq 0$, because of this swap the type would be $(1,2,1,3,1)$ which contradicts Lemma \ref{lemmaAvramov}(b).  If $p_{35}=0$, then the type would be $(1,3,3,1)$ with $l=3$ which contradicts Theorem \ref{Theoremrandl}. Note that if $p_{24}=p_{25}=0$, then by replacing the second row with the third row and the second column with the third column, the type becomes $(1,1,3,2,1)$ which contradicts Lemma \ref{lemmaAvramov}(b). Thus, we can assume that both $p_{24}\neq 0$ and $p_{25}\neq 0$. Similarly, we can prove that the polynomials $p_{34}$ and $ p_{35}$ are non-zero.

Now consider the pair $(p_{46},p_{56})$, it cannot be completely zero because the type cannot be $(1,2,3,1,1)$. Suppose that $p_{46}\neq 0$. Since $\mathbf{D}^2=0$, $p_{56}\neq 0$, and vice versa. Also, the pair $(p_{47}, p_{57})$ cannot be completely zero. Otherwise, we could interchange the sixth row with the seventh row and the sixth column with the seventh column, and the type would become $(1,2,3,1,1)$ which contradicts Lemma \ref{lemmaAvramov}(b). Since $\mathbf{D}^2=0$, if $p_{47}\neq 0$, then $p_{57}\neq 0$, and vice versa.

Suppose that
\begin{align*}
a:= \mathrm{gcd}(p_{24},p_{25}), & \qquad p_{24}=\overline{p_{24}}a, \qquad p_{25}=\overline{p_{25}}a,\\
b:= \mathrm{gcd}(p_{34},p_{35}), & \qquad p_{34}=\overline{p_{34}}b, \qquad p_{35}=\overline{p_{35}}b,\\
c:= \mathrm{gcd}(p_{46},p_{56}), & \qquad p_{46}=\overline{p_{46}}c, \qquad p_{56}=\overline{p_{56}}c,\\
\textrm{and }d:= \mathrm{gcd}(p_{47},p_{57}), & \qquad p_{47}=\overline{p_{47}}d, \qquad p_{57}=\overline{p_{57}}d.
\end{align*}
Since $\mathbf{D}^2=0$, we have $p_{24}p_{46}+p_{25}p_{56}=0$, which implies that $\overline{p_{56}}=u\overline{p_{24}}$ and $\overline{p_{46}}=-u\overline{p_{25}}$ for some unit $u$.
Similarly, for some units $v$ and $w$, we have
\begin{equation*}
    \mathbf{D} = \begin{tikzpicture}[baseline={(m.center)}]
            \matrix [matrix of math nodes,left delimiter={[},right delimiter={]}] (m)
            {
 0 & p_{12} & p_{13} & p_{14} & p_{15} & p_{16} & p_{17} & p_{18} \\
 0 & 0      & 0      & a\overline{p_{24}}  & a\overline{p_{25}}   & p_{26} & p_{27}\quad  & p_{28} \\
 0 & 0      & 0      & vb\overline{p_{24}} & vb \overline{p_{25}} & p_{36} & p_{37} \quad   & p_{38} \\
 0 & 0      & 0      & 0      & 0      & -uc\overline{p_{25}} & {-wd\overline{p_{25}}}\quad & p_{48} \\
 0 & 0      & 0      & 0      & 0      &  uc\overline{p_{24}} & {wd\overline{p_{24}}} \quad & p_{58} \\
 0 & 0      & 0      & 0      & 0      & 0      & 0      & p_{68} \\
 0 & 0      & 0      & 0      & 0      & 0      & 0      & p_{78} \\
 0 & 0      & 0      & 0      & 0      & 0      & 0      & 0      \\
                  } ;
            \draw (m-2-4.north west) rectangle (m-5-7.south east-|m-2-7.north east);
            \end{tikzpicture}.
\end{equation*}
Let $T$ be the boxed submatrix of $\mathbf{D}$ shown above. Since $r=4$, by Theorem \ref{thmPOLY} there exists $\gamma \in  \mathbb{P}^{3}_k$ such that $\overline{p_{24}}(\gamma)=\overline{p_{25}}(\gamma)=0$ and $m^{23}_{67}(\mathbf{D})(\gamma)=0$. Hence all $2\times 2$ minors of $T$ at $\gamma$ are zero. This implies that the rank of $T(\gamma)$ is at most $1$, so the total rank of $\mathbf{D}$ at $\gamma$ is at most $3$. This is a contradiction.

\end{proof}
\begin{remark}\label{remarkProp5}
Proposition \ref{prop5} itself also implies Corollary \ref{Corollary1} and implies Corollary \ref{Corollary2}.
\end{remark}
\begin{proof}Let $k=\overline{\mathbb{F}}_2$ and $r\geq 4$. Suppose that $A=k[y_1,\ldots,y_r]$  and $M$ is a free,
 finitely generated $\mathrm{dg}$-$A$-module such that $\Rank_A M=N$ with $0< \dim_k \Homology_*(M)<\infty$. Without loss of generality, we may assume that
 $N$ is the smallest rank of all such $\mathrm{dg}$-$A$-modules. By Lemma \cite[Lemma~$3.2.1$]{berrinPhD}, given such a $\mathrm{dg}$-$A$-module $M$ there exists a minimal module. By abuse of notation $M$ is a minimal module. Further assume that $M$ has a free flag $F$ with  $\Freeclass_A F= l$. By Theorem \ref{Theoremrandl}, we have $l\geq 4$.
By Lemma \ref{lemmaAvramov}, $N\neq 2,4,6$. We can also eliminate the case $N=8$ easily. The only possibility for the sequence of integers $t=(t_0,\ldots,t_l)$ corresponding to $F$ is $(1,2,2,2,1)$. On the other hand,  by Proposition \ref{prop5} we see that if there exists a  flag admits this sequence, then $r<4$. That contradicts our assumption on $r$, so $N$ must be at least $10$.
\end{proof}
Note that Remark \ref{remarkProp5} gives an alternative proof of the result given  by Refai in \cite[Theorem~$4.2$]{Refai1}.
Consequently,  Proposition \ref{prop5} itself verifies the Rank Conjecture when $k=\overline{\mathbb{F}}_2$ and $m=3$.

\begin{proposition}\label{prop6}
If $\sigma \leq (18)(23)(45)(67)$, then $r<3$.
\end{proposition}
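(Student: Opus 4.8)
The plan is to run the same machinery used in Propositions \ref{prop1}--\ref{prop5}. Since $(18)(23)(45)(67)$ is fixed by the anti-diagonal flip, its partial permutation matrix has block type $(2,2,2,2)$, so by Remark \ref{remark2} the matrix $D$ attached to $\sigma\le(18)(23)(45)(67)$ has the shape
\begin{equation*}
D=\left[\begin{array}{cccccccc}
0&0&p_{13}&p_{14}&p_{15}&p_{16}&p_{17}&p_{18}\\
0&0&p_{23}&p_{24}&p_{25}&p_{26}&p_{27}&p_{28}\\
0&0&0&0&p_{35}&p_{36}&p_{37}&p_{38}\\
0&0&0&0&p_{45}&p_{46}&p_{47}&p_{48}\\
0&0&0&0&0&0&p_{57}&p_{58}\\
0&0&0&0&0&0&p_{67}&p_{68}\\
0&0&0&0&0&0&0&0\\
0&0&0&0&0&0&0&0
\end{array}\right].
\end{equation*}
Write $P,Q,R$ for the three $2\times2$ blocks just above the block diagonal (rows/columns $\{1,2\}\!\to\!\{3,4\}$, $\{3,4\}\!\to\!\{5,6\}$, $\{5,6\}\!\to\!\{7,8\}$) and $A,B,C$ for the blocks in rows $\{1,2\}$/columns $\{5,6\}$, rows $\{1,2\}$/columns $\{7,8\}$, and rows $\{3,4\}$/columns $\{7,8\}$ respectively. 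Then $D^2=0$ is equivalent to the three matrix equations $PQ=0$, $QR=0$ and $PC+AR=0$.

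First I would extract the rank-one structure forced by unique factorization in $S$, exactly as in the earlier propositions. Put $a=\gcd(p_{13},p_{14})$, $b=\gcd(p_{23},p_{24})$, $c=\gcd(p_{35},p_{45})$, $d=\gcd(p_{36},p_{46})$ and $p_{ij}=\overline{p_{ij}}\cdot(\text{the corresponding gcd})$. Since $\gcd(\overline{p_{13}},\overline{p_{14}})=1$, the equation $PQ=0$ forces, for suitable units $u,u',w$, the identities $\overline{p_{35}}=-u\,\overline{p_{14}}$, $\overline{p_{45}}=u\,\overline{p_{13}}$, $\overline{p_{36}}=-u'\overline{p_{14}}$, $\overline{p_{46}}=u'\overline{p_{13}}$, $\overline{p_{23}}=w\,\overline{p_{13}}$, $\overline{p_{24}}=w\,\overline{p_{14}}$; hence $P$ and $Q$ have rank at most $1$ and every entry of $P$, of $Q$, and of the columns $3,4$ of $D$ is a unit multiple of $\overline{p_{13}}$ or $\overline{p_{14}}$. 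A symmetric use of $QR=0$ makes the two rows of $R$ proportional, so $\Rank_S R\le1$ as well. If one of the gcds above is a unit, or one of the blocks $P,Q,R$ vanishes identically, then $D$ acquires extra zero columns or rows, and in those cases the claim follows either from the inequality $8\ge 2^{r-1}(\RowNumber+\ColNumber)$ with a strictly larger $\RowNumber+\ColNumber$, or from Proposition \ref{prop2} or Proposition \ref{prop3}; I would dispose of these degenerate branches first.

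Now reduce to $r=3$ by Remark \ref{remarkvariable} and apply Theorem \ref{thmPOLY} to the two nonconstant homogeneous polynomials $\overline{p_{13}},\overline{p_{14}}$ (legitimate since $r=3>2$), obtaining $\gamma\in\mathbb{P}^2_k$ with $\overline{p_{13}}(\gamma)=\overline{p_{14}}(\gamma)=0$. Then $P(\gamma)=0$ and also $Q(\gamma)=0$ (by the relations just listed), so $D(\gamma)$ is supported on rows $1,2$ in columns $5$--$8$ and on rows $3$--$6$ in columns $7,8$, whence
\begin{equation*}
\Rank D(\gamma)=\Rank\begin{bmatrix}A(\gamma)&B(\gamma)\\0&C(\gamma)\\0&R(\gamma)\end{bmatrix}\le\Rank A(\gamma)+2 .
\end{equation*}
Substituting $P(\gamma)=0$ into $PC+AR=0$ yields $A(\gamma)R(\gamma)=0$; if $R(\gamma)\ne0$ this forces $\ker A(\gamma)\ne0$, so $\Rank A(\gamma)\le1$ and $\Rank D(\gamma)\le3<4$, contradicting hypothesis (4). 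The remaining possibility $R(\gamma)=0$, in which $D(\gamma)$ collapses to $\left[\begin{smallmatrix}A(\gamma)&B(\gamma)\\0&C(\gamma)\end{smallmatrix}\right]$ and the obvious bound is only $\Rank D(\gamma)\le4$, is the step I expect to cost the most work: here one must push the bound below $4$ by forcing $\det A(\gamma)=0$ or $\det C(\gamma)=0$, using a further relation read off from $PC+AR=0$ (e.g. multiplying it on the left by $(\,wb,\,-a\,)$ ties $(m^{12}_{35}(D),m^{12}_{36}(D))$ to the column direction of $R$), or else re-choosing $\gamma$ as a common root of $\overline{p_{13}}$ and one additional factor coming from the rank-one block $R$ — still only two polynomial conditions, so still compatible with $r=3$. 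In every branch one arrives at $\Rank D(\gamma)\le3<4$, contradicting (4), and therefore $r\le2$. As in Proposition \ref{prop5}, the genuine obstacle is the bookkeeping that sharpens the easy estimate ``$\Rank D(\gamma)\le4$'' into the strict inequality while never spending more than two polynomial conditions, and making sure each degenerate sub-case is covered by an earlier result.
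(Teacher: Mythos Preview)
Your argument tracks the paper closely through the gcd/unit bookkeeping and the choice of $\gamma$ with $\overline{p_{13}}(\gamma)=\overline{p_{14}}(\gamma)=0$, but the case $R(\gamma)=0$ that you flag as the hard one is a genuine gap, and neither of your suggested fixes closes it. Multiplying $PC+AR=0$ on the left by $(wb,-a)$ does kill the $PC$ term, but the resulting relation $(wb,-a)A\cdot R=0$ only says that $(wb,-a)A$ lies in the one--dimensional left kernel of $R$ over the fraction field; unwinding this gives back exactly the linear relation $bd\,p_{15}-bc\,p_{16}-ad\,p_{25}+ac\,p_{26}=0$ (the paper's equation~(5)), not the quadratic relation $\det A=0$ that you need. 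And dropping one of the two conditions $\overline{p_{13}}=0$, $\overline{p_{14}}=0$ in favour of a factor from $R$ destroys the vanishing of $P(\gamma)$ and $Q(\gamma)$, so your rank estimate $\Rank D(\gamma)\le\Rank A(\gamma)+2$ no longer applies.

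The paper takes a different route: rather than trying to force $\Rank A(\gamma)\le 1$ at a cleverly chosen point, it shows this holds \emph{identically} over the fraction field $Q$ of $S$. It passes to the $4\times4$ submatrix $T_1=\left[\begin{smallmatrix}P&A\\0&Q\end{smallmatrix}\right]$ (rows $1$--$4$, columns $3$--$6$ of $D$), checks via the relations coming from $PC+AR=0$ that every $3\times3$ minor of $T_1$ vanishes, so $\Rank_Q T_1\le 2$, and combines this with $\Rank_Q P=\Rank_Q Q=1$ to conclude $\Rank_Q A\le 1$, i.e.\ $\det A=0$ as a polynomial; the symmetric argument on $T_2=\left[\begin{smallmatrix}Q&C\\0&R\end{smallmatrix}\right]$ gives $\det C=0$. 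With $\det A=0$ in hand, your bound $\Rank D(\gamma)\le\Rank A(\gamma)+2\le 3$ holds at \emph{every} common zero of $\overline{p_{13}},\overline{p_{14}}$, and the case split on $R(\gamma)$ disappears. So the missing idea is to promote the rank drop on $A$ from a pointwise statement to a generic one, using the $4\times4$ ambient submatrix rather than the $2\times2$ block alone.
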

\begin{proof}
Suppose not; then $r\geq3$, and by Remark \ref{remarkvariable}, we may take $r=3$. We have
$$\mathbf{D}=\left[ \begin{array}{cccccccc}
                      0 & 0      & p_{13} & p_{14} & p_{15} & p_{16} & p_{17} & p_{18} \\
                      0 & 0      & p_{23} & p_{24} & p_{25} & p_{26} & p_{27} & p_{28} \\
                      0 & 0      & 0      & 0      & p_{35} & p_{36} & p_{37} & p_{38} \\
                      0 & 0      & 0      & 0      & p_{45} & p_{46} & p_{47} & p_{48} \\
                      0 & 0      & 0      & 0      & 0      & 0      & p_{57} & p_{58} \\
                      0 & 0      & 0      & 0      & 0      & 0      & p_{67} & p_{68} \\
                      0 & 0      & 0      & 0      & 0      & 0      & 0      & 0     \\
                      0 & 0      & 0      & 0      & 0      & 0      & 0      & 0      \\
                    \end{array}
 \right].
$$

 The matrix $\mathbf{D}$ has block type $t=(2,2,2,2)$. At least one of the pairs $(p_{13}, p_{14})$ and $(p_{23}, p_{24})$ must be non-zero. Otherwise, the block type of $\mathbf{D}$ would be $t=(4,2,2)$ with $l=2$ which contradicts Theorem \ref{Theoremrandl}.  The pair $(p_{13},p_{23})$ cannot vanish because the type turns into $(3,1,2,2)$ which contradicts Lemma \ref{lemmaAvramov}(b). Suppose that $p_{13}\neq 0$, then $p_{14}\neq 0$. Otherwise, we would have $p_{13}p_{35}=0$, $p_{13}p_{36}=0$. Since $k[x_1,x_2, x_3]$ is $\mathrm{UFD}$, $p_{35}=p_{36}=0$. Then we could interchange the third row with the fourth row, and the third column with the fourth column. If $p_{24} \neq 0$, due to this swap, the type of $\mathbf{D}$ would be $(2,1,3,2)$ which contradicts Lemma \ref{lemmaAvramov}(b). If $p_{24}=0$, due to this swap, the type of $\mathbf{D}$ would be $(3,3,2)$ with $l=2$ which contradicts Theorem \ref{Theoremrandl}. Similarly, if we suppose that $p_{14}\neq 0$, then $p_{13}\neq 0$. Thus, we have $p_{13}\neq 0$ if and only if $p_{14}\neq 0$. Moreover, if $p_{13}=0$ and $p_{14}=0$, then we can add the second row to the first row. Since the first two columns are already zero, the square of the new matrix is still zero and its rank remains the same. Therefore, $p_{13}$ and $p_{14}$  can always be set to be both non-zero. Actually, $(p_{23}, p_{24})$ plays the same role as the pair $(p_{13}, p_{14})$ in the previous argument. Thus, we have $p_{23}\neq 0$ and $p_{24}\neq 0$.

At least one of the pairs $(p_{35}, p_{45})$ and $(p_{36}, p_{46})$ must be non-zero. Otherwise, the block type of $\mathbf{D}$ would be $t=(2,4,2)$ with $l=2$ which contradicts Theorem \ref{Theoremrandl}. Note that $p_{35}$ and $p_{45}$ cannot vanish at the same time. Otherwise the type would be $(2,3,1,2)$ which contradicts Lemma \ref{lemmaAvramov}(b). If $p_{35} \neq 0$, then $p_{45}\neq 0$. Otherwise, we would have $p_{57}=p_{58}=0$. Then we could interchange the fifth row with the sixth row, and the fifth column with the sixth column at the same time. Then, if the pair $(p_{36}, p_{46})$ is non-zero, the type turns into $(2,2,1,3)$ which contradicts Lemma \ref{lemmaAvramov}(b). If the pair $(p_{36}, p_{46})$ is zero, the type turns into $(2,3,3)$ which contradicts Theorem \ref{Theoremrandl}. Similarly,  one can show that $p_{45}\neq 0$, then $p_{35} \neq 0$. Thus, $p_{35} \neq 0$ and $p_{45}\neq 0$. Since $(p_{36}, p_{46})$ plays the same role as the pair $(p_{35}, p_{45})$, we have $p_{36}\neq 0$ and $p_{46}\neq 0$. At least one of the pairs $(p_{57}, p_{67})$ and $(p_{58}, p_{68})$ must be non-zero. Otherwise the block type of $\mathbf{D}$ would be $t=(2,2,4)$ with $l=2$ which contradicts Theorem \ref{Theoremrandl}. Again, if $p_{57}\neq 0$, then $p_{67}\neq 0$, and vice versa because of the fact that ${\mathbf{D}}^2=0$. Thus, $p_{57}\neq 0$ if and only if $p_{67}\neq 0$. Note that if $p_{57}=p_{67}=0$, then we can add the eighth column to the seventh column. Since the last two rows are zero, the square of the new matrix is still zero and its rank remains the same. So, we can set  $p_{57}\neq 0$ and $p_{67}\neq 0$. Similarly $p_{58}\neq 0$ and $p_{68}\neq 0$. Further suppose that:
\begin{align*}
a:= \mathrm{gcd}(p_{13},p_{14}), & \qquad p_{13}=\overline{p_{13}}a, \qquad p_{14}=\overline{p_{14}}a,\\
b:= \mathrm{gcd}(p_{23},p_{24}), & \qquad p_{23}=\overline{p_{23}}b, \qquad p_{24}=\overline{p_{24}}b,\\
c:= \mathrm{gcd}(p_{35},p_{45}), & \qquad p_{35}=\overline{p_{35}}c, \qquad p_{45}=\overline{p_{45}}c,\\
d:= \mathrm{gcd}(p_{36},p_{46}), & \qquad p_{36}=\overline{p_{36}}d, \qquad p_{46}=\overline{p_{46}}d,\\
e:= \mathrm{gcd}(p_{57},p_{67}), & \qquad p_{57}=\overline{p_{57}}e, \qquad p_{67}=\overline{p_{67}}e,\\
\textrm{and } f:= \mathrm{gcd}(p_{58},p_{68}), & \qquad p_{58}=\overline{p_{58}}f, \qquad p_{68}=\overline{p_{68}}f.
\end{align*}
Since $\mathbf{D}^2=0$, we have $p_{13}p_{35}+p_{14}p_{45}=0$, which implies that
$\overline{p_{13}}\,\overline{p_{35}}=-\overline{p_{14}}\,\overline{p_{45}}$. We may write
$u\overline{p_{13}}=\overline{p_{45}}$ and $-u\overline{p_{14}}=\overline{p_{35}}$ where $u$ is a unit. Similarly, $v\overline{p_{13}}=\overline{p_{46}}$ and $-v\overline{p_{14}}=\overline{p_{36}}$ for some unit $v$.  An argument similar to the one for $p_{13},p_{14}$ is used for $p_{23}, p_{24}$.
 To simplify our notation, let $\overline{p_{13}}=g_1$ and $\overline{p_{14}}=g_2$, so that $g_1$ and $g_2$ are relatively prime homogeneous polynomials. Moreover, we keep using $b$ instead of $vb$, $c$ instead of $uc$ etc.
 Since ${\mathbf{D}}^2=0$, ${c}g_2 {e}\overline{p_{57}}+{d}g_2{e}\overline{ p_{67}}=0$ and
 ${c}g_1 {e} \overline{p_{57}}+{d}g_1 {e} \overline{p_{67}}=0$.
Then $\overline{c}\, \overline{p_{57}}=-\overline{d}\, \overline{p_{67}}$, so $\overline{p_{57}}=-\overline{d}w $ and $\overline{p_{67}}=\overline{c}w$, where $w$ is a unit. An argument similar to the one for $p_{57}$, $p_{67}$ is used for $p_{58}$, $p_{68}$ for some unit $w'$. We keep using $e$ instead of $ew$ and $f$ instead of $fw'$. Let $\mathbf{T}$ be the boxed submatrix of $\mathbf{D}$ shown below:
\begin{equation*}
    \mathbf{D} = \begin{tikzpicture}[baseline={(m.center)}]
            \matrix [matrix of math nodes,left delimiter={[},right delimiter={]}] (m)
            {
0 & 0 & a g_1 &  a g_2  & A              & B        & p_{17}      & p_{18} \\
0 & 0 & b g_1 &  b g_2  & C              & D        & p_{27}      & p_{28} \\
0 & 0 & 0     & 0       &\quad {-c g_2}  & -d g_2   & E           & F\\
0 & 0 & 0     & 0       &\quad  {c g_1}  & d g_1    & G           & H\\
0 & 0 & 0     & 0       & 0              & 0        & -e d        & -f d\\
0 & 0 & 0     & 0       & 0              & 0        & e  c        &  f c\\
0 & 0 & 0     & 0       & 0              & 0        & 0           & 0     \\
0 & 0 & 0     & 0       & 0              & 0        & 0           & 0     \\
            };
            \draw (m-1-5.north west) rectangle (m-4-8.south east-|m-1-8.north east);
            \end{tikzpicture}, \mbox{ and define }
\mathbf{M} := \begin{tikzpicture} [baseline={(m.center)}]
            \matrix [matrix of math nodes,left delimiter={[},right delimiter={]}] (m)
            {
 a  & p_{17}   & p_{18} \\
 b  & p_{27}   & p_{28} \\
 0  & e        & f     \\
            };
            \end{tikzpicture}.
\end{equation*}

 By Theorem \ref{thmPOLY} there exists $\gamma\in \mathbb{P}^{2}_k$ such that $\mathrm{det}(\mathbf{T}(\gamma))=0$ and $\mathrm{det}(\mathbf{M}(\gamma))=0$. Then, we have
$$\mathbf{D}(\gamma)=\left[ \begin{array}{cccccccc}
0 & 0 & (a g_1)(\gamma) & (a g_2)(\gamma)  & A(\gamma)        & B(\gamma)         & p_{17}(\gamma)   & p_{18}(\gamma) \\
0 & 0 & (b g_1)(\gamma) & (b g_2)(\gamma)  & C(\gamma)        & D(\gamma)         & p_{27}(\gamma)   & p_{28}(\gamma) \\
0 & 0 & 0               & 0                & -(c g_2)(\gamma) & -(d g_2)(\gamma)  & E(\gamma)        & F(\gamma)\\
0 & 0 & 0               & 0                & (c g_1)(\gamma)  & (d g_1)(\gamma)   & G(\gamma)        & H(\gamma)\\
0 & 0 & 0               & 0                & 0                & 0                 & -(e d)(\gamma)   & -(f d)(\gamma)\\
0 & 0 & 0               & 0                & 0                & 0                 & (e c)(\gamma)    & (f c)(\gamma)\\
0 & 0 & 0               & 0                & 0                & 0                 & 0                & 0     \\
0 & 0 & 0               & 0                & 0                & 0                 & 0                & 0     \\
           \end{array}
 \right].
$$
We consider the following three cases.

$\mathbf{Case (1)}$ Suppose that $c(\gamma)=d(\gamma)=0$.  If $g_1(\gamma)=g_2(\gamma)=0$, then $\mathrm{det}(\mathbf{T}(\gamma))=0$ implies that $\Rank(\mathbf{D}(\gamma))\leq 3$. Thus, we have  $g_1(\gamma)\neq 0$ or $g_2(\gamma)\neq 0$. Similarly, if $a(\gamma)=b(\gamma)=0$, then $\Rank(\mathbf{D}(\gamma))\leq 3$. Hence, $a(\gamma)\neq 0$ or $b(\gamma)\neq0$. Since ${\mathbf{D}}^2=0$, we have
\begin{equation*}
(ag_1E+ag_2G)(\gamma)=0,
\end{equation*}
\begin{equation*}
(ag_1F+ag_2H)(\gamma)=0,
\end{equation*}
\begin{equation*}
(bg_1E+bg_2G)(\gamma)=0,
\end{equation*}
\begin{equation*}
(bg_1F+bg_2H)(\gamma)=0.
\end{equation*}.

Then we obtain $\mathrm{det}\left[ \begin{array}{ccc}
 E(\gamma)  & F(\gamma)    \\
 G(\gamma)  & H(\gamma)  \\
 \end{array}
 \right]=0$, so the third row and the fourth row of $\mathbf{D}(\gamma)$ are linearly dependent. Therefore, $\Rank(\mathbf{D}(\gamma))\leq 3$, so we are done.

$\mathbf{Case (2)}$ Either $c(\gamma)\neq 0$ or $d(\gamma)\neq 0$. $\WLOG$ suppose that $c(\gamma)=0$ and $d(\gamma)\neq 0$.
Then, we have
$$\mathbf{D}(\gamma)=\left[ \begin{array}{cccccccc}
0 & 0 & (a g_1)(\gamma) & (a g_2)(\gamma)  & A(\gamma)        & B(\gamma)         & p_{17}(\gamma)   & p_{18}(\gamma) \\
0 & 0 & (b g_1)(\gamma) & (b g_2)(\gamma)  & C(\gamma)        & D(\gamma)         & p_{27}(\gamma)   & p_{28}(\gamma) \\
0 & 0 & 0               & 0                & \mathbf{0}       & -(d g_2)(\gamma)  & E(\gamma)        & F(\gamma)\\
0 & 0 & 0               & 0                & \mathbf{0}       & (d g_1)(\gamma)   & G(\gamma)        & H(\gamma)\\
0 & 0 & 0               & 0                & 0                & 0                 & -(e d)(\gamma)   & -(f d)(\gamma)\\
0 & 0 & 0               & 0                & 0                & 0                 & \mathbf{0}       & \mathbf{0}    \\
0 & 0 & 0               & 0                & 0                & 0                 & 0                & 0     \\
0 & 0 & 0               & 0                & 0                & 0                 & 0                & 0     \\
           \end{array}
 \right].
$$

$\mathbf{Case (2)(i)}$ Suppose that $g_1(\gamma)=g_2(\gamma)=0$. Since $\mathbf{D}^2=0$, we have $(edA)(\gamma)=0$, $(fdA)(\gamma)=0$, $(edC)(\gamma)=0$ and $(fdC)(\gamma)=0$. Then we get ${e}(\gamma)=0=f(\gamma)$ or $A(\gamma)=0=C(\gamma)$. If  ${e}(\gamma)=0=f(\gamma)$, then $\mathrm{det}(\mathbf{T}(\gamma))=0$ implies that $\Rank(\mathbf{D}(\gamma))\leq 3$. If $A(\gamma)=0=C(\gamma)$, then we only have three non-zero columns, so we are done. If $a(\gamma)=b(\gamma)=0$, using the similar argument, we get $\Rank(\mathbf{D}(\gamma))\leq 3$, which is a contradiction.

$\mathbf{Case (2)(ii)}$ Consider the case $g_1(\gamma)\neq 0$ or $g_2(\gamma)\neq 0$. $\WLOG$ suppose that $g_1(\gamma)=0$ and $g_2(\gamma)\neq 0$. Then we have
\begin{equation*}
    \mathbf{D}(\gamma) = \begin{tikzpicture}[baseline={(m.center)}]
            \matrix [matrix of math nodes,left delimiter={[ },right delimiter={ ] }] (m)
            {
0 & 0 &\mathbf{0} & a g_2            & A               & B           & p_{17}       & p_{18} \\
0 & 0 &\mathbf{0} & b g_2            & C               & D           & p_{27}       & p_{28} \\
0 & 0 & 0         & 0                & 0               & -d g_2      & E            & F\\
0 & 0 & 0         & 0                & 0               & \mathbf{0}  & G            & H\\
0 & 0 & 0         & 0                & 0               & 0           & -e d         & -f d\\
0 & 0 & 0         & 0                & 0               & 0           & 0            & 0    \\
0 & 0 & 0         & 0                & 0               & 0           & 0            & 0     \\
0 & 0 & 0         & 0                & 0               & 0           & 0            & 0     \\
};
            \draw (m-1-4.north west) rectangle (m-2-5.south east-|m-1-5.north east);
            \draw (m-4-7.north west) rectangle (m-5-8.south east-|m-4-8.north east);
            \end{tikzpicture} (\gamma).
\end{equation*}
Since $\mathbf{D}^2=0$, we get
\begin{equation}\label{Eq1}
\left(\,{a}g_2G-e {d} A   \right)(\gamma)=0,
\end{equation}
\begin{equation}\label{Eq2}
\left(\,{a} g_2H-f {d} A   \right)(\gamma)=0,
\end{equation}
\begin{equation}\label{Eq3}
\left(\,{b} g_2G-e{d} C   \right)(\gamma)=0,
\end{equation}
\begin{equation}\label{Eq4}
\left(\, {b} g_2H-f {d} C   \right)(\gamma)=0.
\end{equation}

If $a(\gamma)=b(\gamma)=0$, then we have $(edA)(\gamma)=0$, $(fdA)(\gamma)=0$, $(edC)(\gamma)=0$ and $(fdC)(\gamma)=0$. Then we get ${e}(\gamma)=0=f(\gamma)$ or $A(\gamma)=0=C(\gamma)$. In each case, $\Rank\left(\mathbf{D(\gamma)}\right)\leq 3$, so we are done. Thus, $a(\gamma)\neq 0$ or $b(\gamma)\neq 0$.

If  $e(\gamma)=f(\gamma)=0$, then since $a(\gamma)\neq 0$ or $b(\gamma)\neq 0$, we have $G(\gamma)=0=H(\gamma)$. Clearly, we only have three non-zero rows, that means $\Rank\left(\mathbf{D(\gamma)}\right)\leq 3$. Thus, $e(\gamma)\neq 0$ or $f(\gamma)\neq 0$.

If $A(\gamma)=C(\gamma)=0$, then $G(\gamma)=0=H(\gamma)$, and vice versa. Since $\mathrm{det}(\mathbf{M}(\gamma))=0$, the rank of $\left(\left[ \begin{array}{ccc}
 ag_2  & p_{17}   & p_{18} \\
 bg_2  & p_{27}   & p_{28} \\
 0  & -ed        & -fd     \\
           \end{array}
 \right](\gamma) \right)$ is at most $2$. Thus, $\Rank\left(\mathbf{D(\gamma)}\right)\leq 3$ , which is a contradiction.

Hence we have $a(\gamma)\neq 0$ or $b(\gamma)\neq 0$, $e(\gamma)\neq 0$ or $f(\gamma)\neq 0$, $A(\gamma)\neq 0$ or $C(\gamma)\neq 0$, and $G(\gamma)\neq 0$ or $H(\gamma)\neq 0$. Then Equations (\ref{Eq1} and \ref{Eq2}) or (\ref{Eq3} and \ref{Eq4}) give us
\begin{equation}
\left(fG \right)(\gamma)=\left( eH \right)(\gamma).
\end{equation}
 Thus, the fourth and the the fifth rows of $\mathbf{D}(\gamma)$ are linearly dependent. Equations (\ref{Eq1} and \ref{Eq3}) or (\ref{Eq2} and \ref{Eq4}) give us
\begin{equation}
\left(bA\right)(\gamma)=\left(aC\right)(\gamma).
\end{equation}
  So, the fourth and the fifth columns of $\mathbf{D}(\gamma)$ are linearly dependent. By applying elementary row and column operations, the fifth row and the fourth column of $\mathbf{D}(\gamma)$ become zero. Since $\mathrm{det}(\mathbf{T}(\gamma))=0$, the rank of the matrix  $\mathbf{D}(\gamma)$ is at most $3$, which is a contradiction.

$\mathbf{Case (2)(iii)}$ Suppose that $g_1(\gamma)\neq 0$ and $g_2(\gamma)\neq 0$.
If we apply elementary row and column operations to the matrix $\mathbf{D}(\gamma)$, we get
 \begin{align*}
\mathbf{D} (\gamma)\,
& \xrightarrow[C_3 \to g_2C_3-g_1C_4, \, \, C_4\to \frac{C_4}{g_2}]{R_4\to {g_2}R_4+{g_1}R_3}
\mathbf{\tilde{D}}(\gamma) =\left[
\begin{array}{cccccccc}
0 & 0 & \mathbf{0}     &a   & A     & B       & p_{17}            & p_{18} \\
0 & 0 & \mathbf{0}     &b   & C     & D       & p_{27}            & p_{28} \\
0 & 0 & 0     & 0     & 0     & -d g_2  & E                   & F\\
0 & 0 & 0     & 0     & 0     & \mathbf{0}       & {g_2}G+{g_1}E  & {g_2}H+{g_1}F\\
0 & 0 & 0     & 0     & 0     & 0       & -e{d}      & -f{d}\\
0 & 0 & 0     & 0     & 0     & 0       & 0                   & 0  \\
0 & 0 & 0     & 0     & 0     & 0       & 0                   & 0  \\
0 & 0 & 0     & 0     & 0     & 0       & 0                   & 0  \\
\end{array}
\right](\gamma).
\end{align*}
 Clearly $\Rank(\mathbf{D}(\gamma))=\Rank(\mathbf{\tilde{D}}(\gamma))$ and $\mathrm{det}(\mathbf{T}(\gamma))=0=m_{5678}^{1234}(\mathbf{\tilde{D}}(\gamma))$. Since $\mathbf{D}(\gamma)$ and therefore $\mathbf{\tilde{D}}(\gamma)$ are square-zero matrices, we obtain
\begin{equation}\label{eqqqq1}
\left(\,{a}( g_2G+g_1 E)-e {d} A   \right)(\gamma)=0,
\end{equation}
\begin{equation}\label{eqqqq2}
\left(\,{a}( g_2H+g_1 F)-f {d} A   \right)(\gamma)=0,
\end{equation}
\begin{equation}\label{eqqqq3}
\left(\,{b}( g_2G+g_1 E)-e{d} C   \right)(\gamma)=0,
\end{equation}
\begin{equation}\label{eqqqq4}
\left(\, {b}( g_2H+g_1 F)-f {d} C   \right)(\gamma)=0.
\end{equation}

If $a(\gamma)=b(\gamma)=0$, then we have $(edA)(\gamma)=0$, $(fdA)(\gamma)=0$, $(edC)(\gamma)=0$ and $(fdC)(\gamma)=0$. Then we get ${e}(\gamma)=0=f(\gamma)$ or $A(\gamma)=0=C(\gamma)$. In each case, $\Rank(\mathbf{\tilde{D}}(\gamma))\leq 3$, so we are done. Thus, $a(\gamma)\neq 0$ or $b(\gamma)\neq 0$.

If  $e(\gamma)=f(\gamma)=0$, then since $a(\gamma)\neq 0$ and $b(\gamma)\neq 0$, we have $(g_2G+g_1E)(\gamma)=0=(g_2H+g_1F)(\gamma)$. Clearly, we only have three non-zero rows, that means $\Rank(\mathbf{\tilde{D}}(\gamma))\leq 3$. Thus, $e(\gamma)\neq 0$ or $f(\gamma)\neq 0$.

If $A(\gamma)=C(\gamma)=0$, then $(g_2G+g_1E)(\gamma)=0=(g_2H+g_1F)(\gamma)$ and vice versa. Then we have
\begin{equation*}
    \mathbf{\tilde{D}}(\gamma) = \begin{tikzpicture}[baseline={(m.center)}]
            \matrix [matrix of math nodes,left delimiter={[},right delimiter={]}] (m)
            {
0 & 0 & \mathbf{0}     &a      & \mathbf{0}  & B           & p_{17}            & p_{18} \\
0 & 0 & \mathbf{0}     &b      & \mathbf{0}  & D           & p_{27}            & p_{28} \\
0 & 0 & 0              & 0     & 0           & -d g_2      & E            & F\\
0 & 0 & 0              & 0     & 0           & \mathbf{0}  & \mathbf{0}   & \mathbf{0}\\
0 & 0 & 0              & 0     & 0           & 0           & -e{d}        & -f{d}\\
0 & 0 & 0              & 0     & 0           & 0           & 0                   & 0  \\
0 & 0 & 0              & 0     & 0           & 0           & 0                   & 0  \\
0 & 0 & 0              & 0     & 0           & 0           & 0                   & 0  \\
            };
            \draw (m-1-4.north west) rectangle (m-2-4.south east-|m-1-4.north east);
            \draw (m-5-4.north west) rectangle (m-5-4.south east-|m-4-4.north east);
            \draw (m-5-7.north west) rectangle (m-5-8.south east-|m-5-8.north east);
            \draw (m-1-7.north west) rectangle (m-2-7.south east-|m-1-8.north east);
            \end{tikzpicture} (\gamma).
\end{equation*}
Since $\mathrm{det}(\mathbf{M}(\gamma))=0$, the rank of $\left(\left[ \begin{array}{ccc}
 a  & p_{17}   & p_{18} \\
 b  & p_{27}   & p_{28} \\
 0  & -ed        & -fd     \\
           \end{array}
 \right](\gamma) \right)$ is at most $2$. Thus, $\Rank(\mathbf{\tilde{D}}(\gamma))\leq 3$, which is a contradiction.

Hence we have $a(\gamma)\neq 0$ or $b(\gamma)\neq 0$, $e(\gamma)\neq 0$ or $f(\gamma)\neq 0$, $A(\gamma)\neq 0$ or $C(\gamma)\neq 0$, and $(g_2G+g_1E)(\gamma)\neq 0$ or $(g_2H+g_1F)(\gamma)\neq 0$.
\begin{equation*}
    \mathbf{\tilde{D}}(\gamma) = \begin{tikzpicture}[baseline={(m.center)}]
            \matrix [matrix of math nodes,left delimiter={[},right delimiter={]}] (m)
            {
0 & 0 & \mathbf{0}     &a      & A        & B           & p_{17}            & p_{18} \\
0 & 0 & \mathbf{0}     &b      & C        & D           & p_{27}            & p_{28} \\
0 & 0 & 0              & 0     & 0        & -d g_2      & E            & F\\
0 & 0 & 0              & 0     & 0        & \mathbf{0}  & {g_2}G+{g_1}E  & {g_2}H+{g_1}F\\
0 & 0 & 0              & 0     & 0        & 0           & -e{d}        & -f{d}\\
0 & 0 & 0              & 0     & 0        & 0           & 0                   & 0  \\
0 & 0 & 0              & 0     & 0        & 0           & 0                   & 0  \\
0 & 0 & 0              & 0     & 0        & 0           & 0                   & 0  \\
            };
            \draw (m-1-4.north west) rectangle (m-2-5.south east-|m-1-5.north east);
            \draw (m-4-7.north west) rectangle (m-5-8.south east-|m-4-8.north east);
            \end{tikzpicture} (\gamma).
\end{equation*}
Then Equations (\ref{eqqqq1} and \ref{eqqqq2}) or (\ref{eqqqq3} and \ref{eqqqq4}) give us
\begin{equation}
\left({f}(g_2G+g_1E)\right)(\gamma)=\left({e}(g_2H+g_1F)\right)(\gamma).
\end{equation}
 Thus, the fourth and the the fifth rows of $\mathbf{\tilde{D}}(\gamma)$ are linearly dependent. Equations (\ref{eqqqq1} and \ref{eqqqq3}) or (\ref{eqqqq2} and \ref{eqqqq4}) give us
\begin{equation}
\left(bA\right)(\gamma)=\left(aC\right)(\gamma).
\end{equation}
  So, the fourth and the fifth columns of $\mathbf{\tilde{D}}(\gamma)$ are linearly dependent. By applying elementary row and column operations, the fifth row and the fourth column of $\mathbf{\tilde{D}}(\gamma)$ become zero. Since $\mathrm{det}(\mathbf{T}(\gamma))=0$, the rank of the matrix  $\mathbf{\tilde{D}}(\gamma)$ is at most $3$, which is a contradiction.

$\mathbf{Case (3)}$ Suppose that $c(\gamma)\neq 0$ and $d(\gamma)\neq 0$.

$\mathbf{Case (3)(i)}$Suppose that $g_1(\gamma)=g_2(\gamma)=0$, then we have

\begin{align*}
\mathbf{D}(\gamma)=\left[ \begin{array}{cccccccc}
0 & 0 & \mathbf{0} & \mathbf{0}  & A           & B            & p_{17}   & p_{18} \\
0 & 0 & \mathbf{0} & \mathbf{0}  & C           & D            & p_{27}   & p_{28} \\
0 & 0 & 0          & 0           & \mathbf{0}  & \mathbf{0}   & E        & F\\
0 & 0 & 0          & 0           & \mathbf{0}  & \mathbf{0}   & G        & H\\
0 & 0 & 0          & 0           & 0           & 0            & -e d     & -f d\\
0 & 0 & 0          & 0           & 0           & 0            & e c      & f c  \\
0 & 0 & 0          & 0           & 0           & 0            & 0        & 0     \\
0 & 0 & 0          & 0           & 0           & 0            & 0        & 0     \\
\end{array}
\right](\gamma).
\end{align*}
If $e(\gamma)=f(\gamma)=0$, then $\mathrm{det}(\mathbf{T}(\gamma))=0$ implies that $\Rank(\mathbf{D}(\gamma))\leq 3$. If $e(\gamma)\neq 0$ or $f(\gamma)\neq 0$, then $\mathbf{D}^2=0$ implies that the rank of $\left(\left[ \begin{array}{ccc}
 A  & B    \\
 C  & D  \\
 \end{array}
 \right](\gamma)\right)$ is at most $1$. Thus, $\Rank(\mathbf{D}(\gamma))\leq 3$, so we are done. 

$\mathbf{Case (3)(ii)}$ Consider the case $g_1(\gamma)\neq 0$ or $g_2(\gamma)\neq 0$. $\WLOG$ suppose that $g_1(\gamma)=0$ and $g_2(\gamma)\neq 0$. If we apply elementary row and column operations to the matrix $\mathbf{D}(\gamma)$, we get
 \begin{align*}
\mathbf{D} (\gamma)\,
& \xrightarrow[C_5 \to dC_5-cC_6]{R_6\to {d}R_6+{c}R_5, \, \, R_5\to \frac{R_5}{d}}
\mathbf{\dot{D}}(\gamma) =\left[
\begin{array}{cccccccc}
0 & 0 & \mathbf{0}     &ag_2   & dA-cB         & B          & p_{17}            & p_{18} \\
0 & 0 & \mathbf{0}     &bg_2   & dC-cD         & D          & p_{27}            & p_{28} \\
0 & 0 & 0              & 0     & \mathbf{0}    & -d g_2     & E                 & F\\
0 & 0 & 0              & 0     & \mathbf{0}    & \mathbf{0} & G                 & H\\
0 & 0 & 0              & 0     & 0             & 0          & -e                & -f\\
0 & 0 & 0              & 0     & 0             & 0          & \mathbf{0}        & \mathbf{0}  \\
0 & 0 & 0              & 0     & 0             & 0          & 0                 & 0  \\
0 & 0 & 0              & 0     & 0             & 0          & 0                 & 0  \\
\end{array}
\right](\gamma).
\end{align*}

Since $\mathbf{\dot{D}}^2=0$, we get
\begin{equation}\label{EQ1}
\left(\,{a}g_2G-e({d} A-cB  ) \right)(\gamma)=0,
\end{equation}
\begin{equation}\label{EQ2}
\left(\,{a} g_2H-f({d} A-cB)   \right)(\gamma)=0,
\end{equation}
\begin{equation}\label{EQ3}
\left(\,{b} g_2G-e({d} C-cD)   \right)(\gamma)=0,
\end{equation}
\begin{equation}\label{EQ4}
\left(\, {b} g_2H-f( {d} C-cD   )\right)(\gamma)=0.
\end{equation}

If $a(\gamma)=b(\gamma)=0$, then we have $(e(dA-cB))(\gamma)=0$, $(f(dA-cB))(\gamma)=0$, $(e(dC-cD))(\gamma)=0$ and $(f(dC-cD))(\gamma)=0$. Then we get ${e}(\gamma)=0=f(\gamma)$ or $(dA-cB)(\gamma)=0=(dC-cD)(\gamma)$. In each case, $\Rank\left( \mathbf{\dot{D}}(\gamma) \right)\leq 3$, so we are done. Thus, $a(\gamma)\neq 0$ or $b(\gamma)\neq 0$.

If  $e(\gamma)=f(\gamma)=0$, then since $a(\gamma)\neq 0$ or $b(\gamma)\neq 0$, we have $G(\gamma)=0=H(\gamma)$. Clearly, we only have three non-zero rows, that means $\Rank\left( \mathbf{\dot{D}}(\gamma)\right)\leq 3$. Thus, $e(\gamma)\neq 0$ or $f(\gamma)\neq 0$.

If $(dA-cB)(\gamma)=(dC-cD)(\gamma)=0$, then $G(\gamma)=0=H(\gamma)$, and vice versa. Since $\mathrm{det}(\mathbf{M}(\gamma))=0$, the rank of $\left(\left[ \begin{array}{ccc}
 ag_2  & p_{17}   & p_{18} \\
 bg_2  & p_{27}   & p_{28} \\
 0     & -e        & -f     \\
           \end{array}
 \right](\gamma) \right)$ is at most $2$. Thus, $\Rank\left(\mathbf{\dot{D}}(\gamma)\right)\leq 3$ , which is a contradiction.

 Hence we have $a(\gamma)\neq 0$ or $b(\gamma)\neq 0$, $e(\gamma)\neq 0$ or $f(\gamma)\neq 0$, $(dA-cB)(\gamma)\neq 0$ or $(dC-cD)(\gamma)\neq 0$, and $G(\gamma)\neq 0$ or $H(\gamma)\neq 0$. Then Equations (\ref{EQ1} and \ref{EQ2}) or (\ref{EQ3} and \ref{EQ4}) give us
\begin{equation}
\left(fG \right)(\gamma)=\left( eH \right)(\gamma).
\end{equation}
 Thus, the fourth and the the fifth rows of $\mathbf{D}(\gamma)$ are linearly dependent. Equations (\ref{EQ1} and \ref{EQ3}) or (\ref{EQ2} and \ref{EQ4}) give us
\begin{equation}
\left(b(dA-cB)\right)(\gamma)=\left(a(dC-cD)\right)(\gamma).
\end{equation}
  So, the fourth and the fifth columns of $\mathbf{\dot{D}}(\gamma)$ are linearly dependent. By applying elementary row and column operations, the fifth row and the fourth column of $\mathbf{\dot{D}}(\gamma)$ become zero. Since $\mathrm{det}(\mathbf{T}(\gamma))=0$, the rank of the matrix  $\mathbf{\dot{D}}(\gamma)$ is at most $3$, which is a contradiction.

$\mathbf{Case (3)(iii)}$ Suppose that $g_1(\gamma)\neq 0$ and $g_2(\gamma)\neq0$. If we apply elementary row and column operations to the matrix $\mathbf{\tilde{D}}(\gamma)$, we get
 \begin{align*}
\mathbf{\tilde{D}} (\gamma)\,
& \xrightarrow[C_5 \to dC_5-cC_6]{R_6\to {d}R_6+{c}R_5, \, \, R_5\to \frac{R_5}{d}}
\mathbf{\acute{D}}(\gamma) =\left[
\begin{array}{cccccccc}
0 & 0 & \mathbf{0}     &a      & dA-cB         & B          & p_{17}            & p_{18} \\
0 & 0 & \mathbf{0}     &b      & dC-cD         & D          & p_{27}            & p_{28} \\
0 & 0 & 0              & 0     & \mathbf{0}    & -d g_2     & E                 & F\\
0 & 0 & 0              & 0     & \mathbf{0}    & \mathbf{0} & {g_2}G+{g_1}E     & {g_2}H+{g_1}F\\
0 & 0 & 0              & 0     & 0             & 0          & -e             & -f\\
0 & 0 & 0              & 0     & 0             & 0          & \mathbf{0}        & \mathbf{0}  \\
0 & 0 & 0              & 0     & 0             & 0          & 0                 & 0  \\
0 & 0 & 0              & 0     & 0             & 0          & 0                 & 0  \\
\end{array}
\right](\gamma).
\end{align*}
 Clearly $\Rank(\mathbf{D}(\gamma))=\Rank(\mathbf{\acute{D}}(\gamma))$ and $\mathrm{det}(\mathbf{T}(\gamma))=0=m_{5678}^{1234}(\mathbf{\acute{D}}(\gamma))$. Since $\mathbf{D}(\gamma)$ and $\mathbf{\acute{D}}(\gamma)$ are square-zero matrices, we obtain
\begin{equation}\label{eqqq5}
\left(\,{a}( g_2G+g_1 E)-e( {d} A- cB  ) \right)(\gamma)=0,
\end{equation}
\begin{equation}\label{eqqq6}
\left(\,{a}( g_2H+g_1 F)-f( {d} A- cB  ) \right)(\gamma)=0,
\end{equation}
\begin{equation}\label{eqqq7}
\left(\,{b}( g_2G+g_1 E)-e( {d} C - cD ) \right)(\gamma)=0,
\end{equation}
\begin{equation}\label{eqqq8}
\left(\, {b}( g_2H+g_1 F)-f({d} C- cD ) \right)(\gamma)=0.
\end{equation}

If $a(\gamma)=b(\gamma)=0$, then we have $(e(dA-cB))(\gamma)=0$, $(f(dA-cB))(\gamma)=0$, $(e(dC-cD))(\gamma)=0$ and $(f(dC-cD))(\gamma)=0$. Then we get ${e}(\gamma)=0=f(\gamma)$ or $(dA-cB)(\gamma)=0=(dC-cD)(\gamma)$. In each case, $\Rank(\mathbf{\acute{D}}(\gamma))\leq 3$, so we are done. Thus, $a(\gamma)\neq 0$ and $b(\gamma)\neq 0$.

If $e(\gamma)=f(\gamma)=0$, then since $a(\gamma)\neq 0$ and $b(\gamma)\neq 0$, we have $(g_2G+g_1E)(\gamma)=0=(g_2H+g_1F)(\gamma)$. Clearly, we only have three non-zero rows. Then $\Rank(\mathbf{\acute{D}}(\gamma))\leq 3$. Thus, $e(\gamma)\neq 0$ or $f(\gamma)\neq 0$.

If $(dA-cB)(\gamma)=(dC-cD)(\gamma)=0$, then again $(g_2G+g_1E)(\gamma)=0=(g_2H+g_1F)(\gamma)$ and vice versa.
Since $\mathrm{det}(\mathbf{M}(\gamma))=0$, the rank of $\left(\left[ \begin{array}{ccc}
 a  & p_{17}   & p_{18} \\
 b  & p_{27}   & p_{28} \\
 0  & -e       & -f     \\
           \end{array}
 \right](\gamma) \right)$ is at most $2$. Thus, $\Rank(\mathbf{\acute{D}}(\gamma))\leq 3$, which is a contradiction.

 Hence we can assume that $a(\gamma)\neq 0$ or $b(\gamma)\neq 0$, $e(\gamma)\neq 0$ or $f(\gamma)\neq 0$, $(dA-cB)(\gamma)\neq 0$ or $(dC-cD)(\gamma)\neq 0$, and $(g_2G+g_1E)(\gamma)\neq 0$ or $(g_2H+g_1F)(\gamma)\neq 0$. Then Equations (\ref{eqqq5} and \ref{eqqq6}) or (\ref{eqqq7} and \ref{eqqq8}) give us
\begin{equation}\label{eqqq9}
\left({f}(g_2G+g_1E)\right)(\gamma)=\left({e}(g_2H+g_1F)\right)(\gamma),
\end{equation}

 Thus, the fourth and the the fifth rows of $\mathbf{\acute{D}}(\gamma)$ are linearly dependent. Equations (\ref{eqqq5} and \ref{eqqq7}) or (\ref{eqqq6} and \ref{eqqq8}) give us
\begin{equation}\label{eqqq10}
\left(b(dA-cB)\right)(\gamma)=\left(a(dC-cD)\right)(\gamma).
\end{equation}
  So, the fourth and the fifth columns of $\mathbf{\acute{D}}(\gamma)$ are linearly dependent. By applying elementary row and column operations, the fifth row and the fourth column of $\mathbf{\acute{D}}(\gamma)$ become zero. Since $\mathrm{det}(\mathbf{T}(\gamma))=0$, the rank of the matrix  $\mathbf{\acute{D}}(\gamma)$ is at most $3$, which is a contradiction.
\end{proof}\begin{proposition}\label{prop7}
If $\sigma \leq (12)(36)(45)(78)$, then $r<4$.
\end{proposition}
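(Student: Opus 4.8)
The plan is to argue by contradiction following the template of Propositions~\ref{prop5} and \ref{prop6}, the new feature being that the middle block of $D$ is now a $3\times 3$ matrix carrying two of the four pivots. Assume $r\ge 4$; by Remark~\ref{remarkvariable} we may take $r=4$. The matrix $D$ corresponding to $\sigma\le (12)(36)(45)(78)$ is the nilpotent upper triangular block matrix of type $(1,3,3,1)$, namely
\[
D=\left[\begin{array}{cccccccc}
0 & p_{12} & p_{13} & p_{14} & p_{15} & p_{16} & p_{17} & p_{18}\\
0 & 0 & 0 & 0 & p_{25} & p_{26} & p_{27} & p_{28}\\
0 & 0 & 0 & 0 & p_{35} & p_{36} & p_{37} & p_{38}\\
0 & 0 & 0 & 0 & p_{45} & p_{46} & p_{47} & p_{48}\\
0 & 0 & 0 & 0 & 0 & 0 & 0 & p_{58}\\
0 & 0 & 0 & 0 & 0 & 0 & 0 & p_{68}\\
0 & 0 & 0 & 0 & 0 & 0 & 0 & p_{78}\\
0 & 0 & 0 & 0 & 0 & 0 & 0 & 0
\end{array}\right].
\]
Write $u=(p_{12},p_{13},p_{14})$, $z=(p_{15},p_{16},p_{17})$, $M=(p_{ij})_{2\le i\le 4,\,5\le j\le 7}$, $v=(p_{28},p_{38},p_{48})^{\mathsf T}$ and $w=(p_{58},p_{68},p_{78})^{\mathsf T}$; expanding $D^2=0$ yields exactly the relations $uM=0$, $Mw=0$ and $uv+zw=0$.

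The pivotal elementary observation is that $\Rank D(\gamma)\le 2+\Rank M(\gamma)$ at every point $\gamma$: the first column of $D$ is zero, columns $2,3,4$ together span at most $\langle e_1\rangle$, columns $5,6,7$ span the column space of $M$ together with at most $e_1$, and column $8$ contributes at most one further dimension. Therefore it suffices to produce a point $\gamma\in\mathbb{P}^{3}_{k}$ with $\Rank M(\gamma)\le 1$, which contradicts hypothesis $(4)$. The degenerate cases are disposed of first and are interchanged by the anti-diagonal symmetry of $(12)(36)(45)(78)$: if $u\equiv 0$ then the row space of $D$ lies in $\langle e_5,e_6,e_7,e_8\rangle$, so hypothesis $(4)$ would force the $4\times 4$ determinant $m^{1234}_{5678}(D)$ to be a homogeneous polynomial vanishing nowhere on $\mathbb{P}^{3}_{k}$, which is impossible since it has zero constant term by hypothesis $(3)$; the case $w\equiv 0$ is symmetric. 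So we may assume $u\not\equiv 0$ and $w\not\equiv 0$.

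Now I would produce the point $\gamma$. Since $u\not\equiv 0$, the relation $uM=0$ reads $p_{12}R_2+p_{13}R_3+p_{14}R_4=0$, where $R_i$ denotes the $i$-th row of $M$; fix $j_0\in\{2,3,4\}$ with $p_{1j_0}\not\equiv 0$ and let $i_1<i_2$ be the other two indices, so that over $\mathrm{Q}$ one has $\operatorname{rowspace}M=\langle R_{i_1},R_{i_2}\rangle$ and hence $\Rank_{\mathrm{Q}}M\le 2$. The three $2\times 2$ minors $m^{i_1 i_2}_{56}(D)$, $m^{i_1 i_2}_{57}(D)$, $m^{i_1 i_2}_{67}(D)$ then detect the rank of the $2\times 3$ matrix with rows $R_{i_1},R_{i_2}$; they are at most three nonconstant homogeneous polynomials in $k[x_1,\dots,x_4]$ (each has zero constant term, and any that vanishes identically is discarded), so since $r=4>3$ Theorem~\ref{thmPOLY} yields a common zero $\gamma\in\mathbb{P}^{3}_{k}$, at which the rows $R_{i_1}(\gamma)$ and $R_{i_2}(\gamma)$ are proportional. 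Reading $p_{12}R_2+p_{13}R_3+p_{14}R_4=0$ at such a $\gamma$ with $p_{1j_0}(\gamma)\ne 0$ gives $R_{j_0}(\gamma)\in\langle R_{i_1}(\gamma),R_{i_2}(\gamma)\rangle$, hence $\Rank M(\gamma)\le 1$ and $\Rank D(\gamma)\le 3$, contradicting $(4)$ and proving $r<4$. To finish one must also cover the subcase where every common zero of those minors makes $p_{1j_0}$ vanish; there I would introduce the gcd factorizations $p_{1j}=g\,\bar p_{1j}$ and $p_{j8}=h\,\bar p_{j8}$, together with the companion relation $Mw=0$, exactly as in Proposition~\ref{prop6}, to reduce $M$ to a smaller block whose $2\times 2$ minors can simultaneously be made to vanish at a point off the degenerate loci.

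The step I expect to be the main obstacle is this last one. Because the middle block $M$ is $3\times 3$ and carries two pivots, $D^2=0$ produces three-term rather than two-term relations, so the clean gcd trick of Proposition~\ref{prop6} does not by itself collapse $M$ to a $2\times 2$ core; one has to argue — via a bounded case analysis on which of $p_{12},p_{13},p_{14}$ and which of $p_{58},p_{68},p_{78}$ vanish identically, combined with the inequality $r=4>3$ — that the three polynomials governing $\Rank M$ can be made to vanish simultaneously at a point where neither $u$ nor $w$ nor the relevant gcd factor vanishes, so that the bound $\Rank D(\gamma)\le 2+\Rank M(\gamma)$ actually bites.
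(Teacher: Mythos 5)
Your reduction $\Rank D(\gamma)\le 2+\Rank M(\gamma)$ is correct, but the argument has a genuine gap exactly at the step you yourself flag as the main obstacle. In your main case you only get $\Rank M(\gamma)\le 1$ at a common zero $\gamma$ of the three $2\times 2$ minors built from the rows $R_{i_1},R_{i_2}$ \emph{provided} $p_{1j_0}(\gamma)\neq 0$, and Theorem \ref{thmPOLY} gives no way to keep $\gamma$ off the hypersurface $p_{1j_0}=0$: the common zero locus of those three minors may be entirely contained in it, in which case $R_{j_0}(\gamma)$ can stay independent and no contradiction results. The deferred subcase is the whole difficulty, and, as you note, the two-term gcd trick of Proposition \ref{prop6} does not directly collapse the three-term relations; your sketch of how to handle it is not carried out. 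The paper closes this hole with a different mechanism: it first proves that any two of $p_{12},p_{13},p_{14}$ are relatively prime (otherwise the gcd, $p_{14}$ and $m^{1234}_{5678}(D)$ are three forms with a common zero $\gamma$; at such $\gamma$ the relations coming from $D^2=0$ force either the columns $5,6,7$ restricted to rows $1$--$4$ to be dependent, or $p_{58},p_{68},p_{78}$ to vanish so that the rank is controlled by the vanishing minor $m^{1234}_{5678}$ --- either way $\Rank D(\gamma)\le 3$). Then, using $uM=0$ together with pairwise coprimality, each $f_i$ is shown to be a unit multiple of suitable $2\times 2$ minors of the middle block; hence a common zero of three chosen minors kills $f_1,f_2,f_3$ simultaneously, which by the previous argument also forces $p_{58},p_{68},p_{78}$ to vanish there, and then all $2\times2$ minors of the middle block vanish at $\gamma$, giving $\Rank D(\gamma)\le 3$. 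Your proposal lacks this coprimality-and-divisibility step, which is precisely what guarantees that the point produced by Theorem \ref{thmPOLY} is not a degenerate one, so as written it does not prove the proposition.

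Two smaller points. Your dismissal of the case $u\equiv 0$ is incorrect as stated: when $w\not\equiv 0$, rank $4$ at a point does not force $m^{1234}_{5678}(D)$ to be nonzero there, since column $8$ can gain independence from rows $5$--$7$; the case is still easy (from $Mw=0$ and $zw=0$ with $w\neq 0$ the block in rows $1$--$4$, columns $5$--$7$ has rank at most $2$ over the fraction field, so $\Rank D\le 3$ everywhere), but it needs the correct argument. Finally, the paper observes that this proposition admits a one-line proof you did not mention: $D$ has type $(1,3,3,1)$, so by Remark \ref{remark2} it carries a free flag with $l=3$, and Theorem \ref{Avramov} (via Theorem \ref{Theoremrandl}) gives $r\le 3$ directly.
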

\begin{proof}
Suppose to the contrary that $r\geq 4$, and by Remark \ref{remarkvariable}, we may take $r=4$. We have
$$\mathbf{D}=\left[ \begin{array}{cccccccc}
                      0 & p_{12}& p_{13} & p_{14} & p_{15} & p_{16} & p_{17} & p_{18} \\
                      0 & 0     & 0      & 0      & p_{25} & p_{26} & p_{27} & p_{28} \\
                      0 & 0     & 0      & 0      & p_{35} & p_{36} & p_{37} & p_{38} \\
                      0 & 0     & 0      & 0      & p_{45} & p_{46} & p_{47} & p_{48} \\
                      0 & 0     & 0      & 0      & 0      & 0      & 0      & p_{58} \\
                      0 & 0     & 0      & 0      & 0      & 0      & 0      & p_{68} \\
                      0 & 0     & 0      & 0      & 0      & 0      & 0      & p_{78} \\
                      0 & 0     & 0      & 0      & 0      & 0      & 0      & 0      \\
                    \end{array}
 \right].
$$
The matrix $\mathbf{D}$ is of type $(1,3,3,1)$, and by Remark \ref{remark2} there exists a free flag with $l=3$. However, Theorem \ref{Theoremrandl} implies that $l\geq r$, so $l\geq 4$, which is a contradiction.
\end{proof}
\begin{proposition}\label{prop8}
If $\sigma \leq (12)(38)(47)(56)$  or  $\sigma \leq (16)(25)(34)(78)$, then $r<3$.
\end{proposition}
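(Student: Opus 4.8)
By the anti\mbox{-}diagonal symmetry of the corresponding partial permutation matrices, it suffices to treat $\sigma\le (12)(38)(47)(56)$. The plan is to exhibit a free flag of $F:=S^{8}$ with only two nontrivial steps and then apply Theorem~\ref{Theoremrandl}; this makes both Remark~\ref{remarkvariable} and any argument by contradiction unnecessary.

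First I would pin down the shape of $D$. Since $D$ lies in the closure of the Borel orbit of $(12)(38)(47)(56)$, and the nonzero entries of the generic matrix in that orbit occupy only row~$1$ together with the $4\times 3$ block in rows $2$–$5$ and columns $6$–$8$, every coordinate outside that region vanishes identically on the closure, hence on $D$. Therefore column~$1$ of $D$ is zero, rows $6,7,8$ of $D$ are zero, and columns $2,\dots ,5$ of $D$ have nonzero entries only in row~$1$; in other words $D$ is a nilpotent block upper triangular matrix of type $(1,4,3)$. In particular $\ColNumber=1$ and $\RowNumber=3$, so the inequality to be proved, namely $8\ge 2^{r-1}(\RowNumber+\ColNumber)=2^{r+1}$, is precisely $r\le 2$.

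Next, regard $(S^{8},D)$ as a differential graded $S$-module $(F,d)$ and put $F^{0}=Se_{1}$, $F^{1}=Se_{1}\oplus\cdots\oplus Se_{5}$, $F^{2}=F$. The vanishings recorded above give $d(F^{0})=0$, $d(F^{1})\subseteq F^{0}$ and $d(F^{2})\subseteq F^{1}$, so this is a free flag and $\Freeclass_{S}F\le 2$. Condition~$4$ forces $H(F)$ to be supported only at the maximal ideal $(x_{1},\dots ,x_{r})$, hence of finite length; condition~$3$ makes the differential minimal (all entries lie in that ideal), and a nonzero minimal free differential module over a regular local ring has nonzero homology (a short syzygy argument). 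Thus $0<\dim_{k}H(F)<\infty$, and Theorem~\ref{Theoremrandl} applied with $l=2$ gives $r\le\Freeclass_{S}F\le 2$, i.e.\ $r<3$, as claimed. The symmetric involution $(16)(25)(34)(78)$ has type $(3,4,1)$ and is handled by the same flag with $F^{0}=Se_{1}\oplus Se_{2}\oplus Se_{3}$ and $F^{1}=Se_{1}\oplus\cdots\oplus Se_{7}$.

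The delicate point is the structural one in the second paragraph: one must be sure that the named entries of $D$ really vanish. This is where the combinatorics enters — it uses the explicit description of the generic matrix in the Borel orbit of the maximal involution $(12)(38)(47)(56)$, together with the fact recalled in Section~$3$ that moves of type~$\RN{3}$ leave $\ColNumber$ and $\RowNumber$ unchanged, so that passing to the maximal element is harmless. A purely computational alternative is available but less efficient: with $\mathbf r=(p_{12},\dots ,p_{15})$ and $T$ the $4\times 3$ block in rows $2$–$5$ and columns $6$–$8$, one has $\mathbf r T=0$ from $D^{2}=0$ and $\Rank D(a)=1+\Rank T(a)$ whenever $a\ne 0$ and $\mathbf r(a)\ne 0$; condition~$4$ then forces $\mathbf r$ to be a constant multiple of the vector of maximal minors of $T$ and forces $T$ to have rank $3$ at every point, after which a determinant-and-degree count (equivalently, a vanishing of $H^{1}$ of a line bundle on $\mathbb P^{2}$ once one reduces to $r=3$ via Remark~\ref{remarkvariable}) shows that no such $T$ exists with the degrees dictated by condition~$5$. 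The flag argument is shorter, so I would present that.
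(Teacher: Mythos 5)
Your argument is correct and is essentially the paper's own proof: both reduce by anti--diagonal symmetry to $(12)(38)(47)(56)$, observe that $D$ is a block matrix of type $(1,4,3)$, hence admits a free flag with $l=2$ (Remark~\ref{remark2}), and conclude $r\leq 2$ from the class bound of Theorem~\ref{Avramov}/Theorem~\ref{Theoremrandl}. The only difference is cosmetic: you spell out the verification that $H(F)$ is nonzero and of finite length (which the paper leaves implicit in its general framework) and phrase the argument directly rather than by contradiction.
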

\noindent These cases are symmetric, so it is enough to prove the case $\sigma \leq (12)(38)(47)(56)$.
\begin{proof}
Suppose to the contrary that $\sigma \leq (12)(38)(47)(56)$ and $r\geq3$. By Remark \ref{remarkvariable}, it is enough to consider $r=3$.
$$\mathbf{D}=\left[ \begin{array}{cccccccc}
                      0 & p_{12}& p_{13} & p_{14} & p_{15} & p_{16} & p_{17} & p_{18} \\
                      0 & 0     & 0      & 0      & 0      & p_{26} & p_{27} & p_{28} \\
                      0 & 0     & 0      & 0      & 0      & p_{36} & p_{37} & p_{38} \\
                      0 & 0     & 0      & 0      & 0      & p_{46} & p_{47} & p_{48} \\
                      0 & 0     & 0      & 0      & 0      & p_{56} & p_{57} & p_{58} \\
                      0 & 0     & 0      & 0      & 0      & 0      & 0      & 0      \\
                      0 & 0     & 0      & 0      & 0      & 0      & 0      & 0      \\
                      0 & 0     & 0      & 0      & 0      & 0      & 0      & 0      \\
                    \end{array}
 \right].
$$
 The matrix $\mathbf{D}$ is of type $(1,4,3)$, and by Remark \ref{remark2} there exists a free flag with $l=2$. However, Theorem \ref{Theoremrandl} implies that $l\geq r$, so $l\geq 3$, which is a contradiction.
\end{proof}
\begin{proposition}\label{prop9}
If $\sigma \leq (14)(23)(58)(67)$, then $r<3$.
\end{proposition}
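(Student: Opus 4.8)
The plan is to proceed exactly as in Propositions \ref{prop7} and \ref{prop8}: determine the block shape that $\sigma\le(14)(23)(58)(67)$ forces on $D$, observe that this shape has small free class, and then quote Avramov's theorem. The involution $(14)(23)(58)(67)$ carries the arrows $1\mapsto 4$, $2\mapsto 3$, $5\mapsto 8$, $6\mapsto 7$, and a position $(i,j)$ of a $B_8$-conjugate of the corresponding partial permutation matrix can be non-zero only when some arrow $a\mapsto b$ satisfies $i\le a$ and $b\le j$. Running through the four arrows shows that columns $1,2$ and rows $7,8$ vanish and, crucially, that the entire central $4\times4$ block (rows $3$–$6$, columns $3$–$6$) vanishes as well. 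Hence every $D$ whose Borel orbit lies below $(14)(23)(58)(67)$ is a nilpotent upper triangular block matrix of type $t=(2,4,2)$:
\[
D=\left[\begin{array}{cccccccc}
0&0&p_{13}&p_{14}&p_{15}&p_{16}&p_{17}&p_{18}\\
0&0&p_{23}&p_{24}&p_{25}&p_{26}&p_{27}&p_{28}\\
0&0&0&0&0&0&p_{37}&p_{38}\\
0&0&0&0&0&0&p_{47}&p_{48}\\
0&0&0&0&0&0&p_{57}&p_{58}\\
0&0&0&0&0&0&p_{67}&p_{68}\\
0&0&0&0&0&0&0&0\\
0&0&0&0&0&0&0&0
\end{array}\right].
\]

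Next I would invoke Remark \ref{remark2}: the chain $0=F^{-1}\subseteq F^0\subseteq F^1\subseteq F^2=F$ with $F^0=\langle e_1,e_2\rangle$ and $F^1=\langle e_1,\dots,e_6\rangle$ is a free flag with $l=2$, so $\Freeclass_S F\le 2$. As recalled in the Introduction, conditions $(3)$–$(5)$ guarantee that $\Homology(F)$ is non-zero and of finite length: condition $(3)$ gives $D(0,\dots,0)=0$, hence $\Homology(F\otimes_S k)=F\otimes_S k\neq 0$ and so $\Homology(F)\neq 0$, while conditions $(4)$ and $(5)$ pin the support of $\Homology(F)$ to the origin. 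Localizing at $(x_1,\dots,x_r)$ and applying Theorem \ref{Avramov} — equivalently, applying Theorem \ref{Theoremrandl} directly — then yields $r\le\Freeclass_S F\le 2$, i.e. $r<3$, contradicting the standing hypothesis $r\ge 3$.

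I do not expect a genuine obstacle here. Once the type $(2,4,2)$ is in hand the conclusion is immediate, so the only place demanding care is the combinatorial verification that the central $4\times4$ block must vanish; this is exactly what separates the present case (type $(2,4,2)$, so $l=2$) from the superficially similar Proposition \ref{prop6} (type $(2,2,2,2)$, so $l=3$), and it is what makes Avramov's bound strong enough to force $r\le 2$. If one preferred to avoid Theorem \ref{Avramov}, the same conclusion should also be reachable in the style of Propositions \ref{prop1}–\ref{prop6}, using the four relations from $D^2=0$ among the last two columns together with Theorem \ref{thmPOLY}, but the free-flag route is the short one.
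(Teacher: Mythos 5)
Your proposal is correct and follows essentially the same route as the paper: the involution forces $D$ to be of type $(2,4,2)$, Remark \ref{remark2} then gives a free flag with $l=2$, and Theorem \ref{Avramov} (via Theorem \ref{Theoremrandl}) forces $l\geq r$, contradicting $r\geq 3$. The extra details you supply (the combinatorial check of the vanishing central block and the finiteness/nonvanishing of homology) are consistent with, and merely elaborate on, the paper's brief argument.
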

\begin{proof}
Suppose to the contrary that $r\geq 3$. By Remark \ref{remarkvariable}, we may take $r=3$ and consider
$$\mathbf{D}=\left[ \begin{array}{cccccccc}
                      0 & 0 & p_{13} & p_{14} & p_{15} & p_{16} & p_{17} & p_{18} \\
                      0 & 0 & p_{23} & p_{24} & p_{25} & p_{26} & p_{27} & p_{28} \\
                      0 & 0 & 0      & 0      & 0      & 0      & p_{37} & p_{38} \\
                      0 & 0 & 0      & 0      & 0      & 0      & p_{47} & p_{48} \\
                      0 & 0 & 0      & 0      & 0      & 0      & p_{57} & p_{58} \\
                      0 & 0 & 0      & 0      & 0      & 0      & p_{67} & p_{68} \\
                      0 & 0 & 0      & 0      & 0      & 0      & 0      & 0     \\
                      0 & 0 & 0      & 0      & 0      & 0      & 0      & 0      \\
                    \end{array}
 \right].
$$
The matrix $\mathbf{D}$ is of type $(2,4,2)$, and by Remark \ref{remark2} there exists a free flag with $l=2$. However, Theorem \ref{Theoremrandl} implies that $l\geq 3$, which is a contradiction.
\end{proof}\begin{proposition}\label{prop10}
If $\sigma \leq (18)(23)(47)(56)$ or $\sigma \leq (18)(25)(34)(67)$, then $r<2$.
\end{proposition}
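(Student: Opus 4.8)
I will prove the first case, $\sigma\le(18)(23)(47)(56)$; the second, $\sigma\le(18)(25)(34)(67)$, is the anti-diagonal flip and is disposed of by the symmetry already used above (it interchanges block types $(2,3,3)$ and $(3,3,2)$). Assume for contradiction that $r\ge 2$; by Remark \ref{remarkvariable} we may take $r=2$. The matrix $D$ attached to this orbit is the nilpotent block matrix of type $(2,3,3)$: columns $1,2$ and rows $6,7,8$ vanish and so does the middle diagonal block, so
$$D=\left[\begin{array}{cccccccc}
0&0&p_{13}&p_{14}&p_{15}&p_{16}&p_{17}&p_{18}\\
0&0&p_{23}&p_{24}&p_{25}&p_{26}&p_{27}&p_{28}\\
0&0&0&0&0&p_{36}&p_{37}&p_{38}\\
0&0&0&0&0&p_{46}&p_{47}&p_{48}\\
0&0&0&0&0&p_{56}&p_{57}&p_{58}\\
0&0&0&0&0&0&0&0\\
0&0&0&0&0&0&0&0\\
0&0&0&0&0&0&0&0
\end{array}\right].$$
Write $B_1=(p_{ij})_{i\le 2,\ 3\le j\le 5}$, $C=(p_{ij})_{i\le 2,\ 6\le j\le 8}$ and $B_2=(p_{ij})_{3\le i\le 5,\ 6\le j\le 8}$, and let $Q$ be the field of fractions of $S$; then $D^2=0$ is equivalent to $B_1B_2=0$.

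The first step is to observe that condition (4) forces $\Rank_Q B_1=1$: if $\Rank_Q B_1=2$ then the columns of $B_2$ lie in the one-dimensional space $\ker B_1$, so $\Rank_Q B_2\le 1$ and $\Rank_Q D\le 3$; and $B_1=0$ makes columns $3,4,5$ of $D$ vanish, again giving $\Rank_Q D\le 3$. Hence, dividing the two rows of $B_1$ by their gcd and the result by the gcd of its columns (exactly the bookkeeping of Propositions \ref{prop2}, \ref{prop5}, \ref{prop6}), we may write $B_1=\left(\begin{smallmatrix}-\mu_2\\ \mu_1\end{smallmatrix}\right)(s_3,s_4,s_5)$ with $\gcd(\mu_1,\mu_2)=1$ and with $s_3,s_4,s_5$ having no common zero. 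Now $B_1B_2=0$ becomes the single syzygy $s_3R_3+s_4R_4+s_5R_5=0$ among the rows $R_3,R_4,R_5$ of $B_2$; in particular $\det B_2\equiv 0$.

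Next set $\rho:=\mu_1(p_{16},p_{17},p_{18})+\mu_2(p_{26},p_{27},p_{28})$, the image under $C$ of the left null vector $(\mu_1,\mu_2)$ of $B_1$, and $E_{ab}:=\det\left(\begin{smallmatrix}\rho\\ R_a\\ R_b\end{smallmatrix}\right)$ for $\{a,b\}\subset\{3,4,5\}$. A row reduction of $D$ shows that, off the common zero locus of the entries of $B_1$, one has $\Rank D=1+\Rank\left(\begin{smallmatrix}\rho\\ B_2\end{smallmatrix}\right)$, and the four $3\times 3$ minors of $\left(\begin{smallmatrix}\rho\\ B_2\end{smallmatrix}\right)$ are $E_{34},E_{35},E_{45}$ and $\det B_2=0$. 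From the syzygy I obtain $s_5E_{35}=-s_4E_{34}$ and $s_5E_{45}=s_3E_{34}$, so after clearing $\gcd(s_3,s_4,s_5)$ the triple $(E_{34},E_{35},E_{45})$ equals $\tilde h\cdot(\text{a triple with no common zero})$ for a single homogeneous polynomial $\tilde h$. If $\tilde h$ (equivalently $E_{34}$) vanished identically then $\Rank_Q\left(\begin{smallmatrix}\rho\\ B_2\end{smallmatrix}\right)\le 2$ and $\Rank_Q D\le 3$, contradicting (4); and a degree count from condition (5), using $d_3+d_4+d_5\ge d_6+d_7+d_8$ together with $\deg\mu_i\ge 0$, gives $\deg E_{34}-\deg s_5\ge 2$, hence $\deg\tilde h\ge 2$. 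Thus $\tilde h$ is nonconstant.

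Finally, since $r=2$, Theorem \ref{thmPOLY} (with $s=1$) yields $\gamma\in\mathbb{P}^1_k$ with $\tilde h(\gamma)=0$; because the cleared triple has no common zero, $E_{34}(\gamma)=E_{35}(\gamma)=E_{45}(\gamma)=0$, and with $\det B_2(\gamma)=0$ this gives $\Rank\left(\begin{smallmatrix}\rho\\ B_2\end{smallmatrix}\right)(\gamma)\le 2$, hence $\Rank D(\gamma)\le 3$. A short case distinction handles the finitely many $\gamma$ at which $\gamma$ meets the zero locus of $B_1$ or of some $s_i$: there the rank of $D(\gamma)$ drops for the same reasons. This contradicts condition (4), so $r<2$. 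The delicate part is precisely this last rank bound — making the gcd bookkeeping honest so that $\tilde h$ is a genuine polynomial of positive degree, and checking $\Rank D(\gamma)\le 3$ even at the exceptional points — which is routine but the one place a slip would be fatal.
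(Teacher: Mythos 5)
Your overall strategy is sound and, in its second half, genuinely different from the paper's: after the rank-one factorization of the $2\times 3$ block $B_1$ (the paper also establishes $\Rank(L)=1$), the paper proceeds via an auxiliary $4\times 4$ determinant $T$ and a coprimality/Nullstellensatz argument, whereas you express the three relevant $3\times 3$ minors of $\bigl(\begin{smallmatrix}\rho\\ B_2\end{smallmatrix}\bigr)$ as $\tilde h\cdot(s_5,-s_4,s_3)$ and derive the contradiction from a projective zero of $\tilde h$ via Theorem \ref{thmPOLY}. Your generic-case computation checks out: the syzygy, $\det B_2\equiv 0$, the rank identity $\Rank D(\gamma)=1+\Rank\bigl(\begin{smallmatrix}\rho\\ B_2\end{smallmatrix}\bigr)(\gamma)$ wherever $B_1(\gamma)\neq 0$, and the count $\deg\tilde h=2\deg\mu_1+(d_1-d_2)+\bigl(\sum_{3\le i\le 5}d_i-\sum_{6\le j\le 8}d_j\bigr)+2\ge 2$. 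One cosmetic point: you cannot in general normalize so that \emph{both} $\gcd(\mu_1,\mu_2)=1$ and $(s_3,s_4,s_5)$ is primitive (try $B_1=\left[\begin{smallmatrix}x^3&x^2y&0\\ x^2y&xy^2&0\end{smallmatrix}\right]$, which forces $(\mu_2,\mu_1)=(x^2,xy)$ up to units once $(s_3,s_4,s_5)=(x,y,0)$ is primitive); fortunately only primitivity of $(s_3,s_4,s_5)$ is actually used, and that is always achievable over the UFD $k[x_1,x_2]$, so nothing essential is lost.

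The genuine gap is that your degree count tacitly assumes both rows of $B_1$ are nonzero: it uses $\deg\mu_i\ge 0$ for \emph{both} $i$ together with the homogeneity relation $\deg\mu_2-\deg\mu_1=d_1-d_2$, which is unavailable if $\mu_1=0$ or $\mu_2=0$. Matrices with, say, $p_{23}=p_{24}=p_{25}\equiv 0$ lie in the closure for $(18)(23)(47)(56)$ and satisfy all six hypotheses, so the proof must treat them; they are not covered by your ``finitely many exceptional $\gamma$'' remark, which concerns specialization points rather than identically vanishing entries. In that situation $\mu_1=0$, condition (4) forces $\mu_2$ to be a nonzero constant (otherwise columns $3$--$5$ of $D$ vanish at a zero of $\mu_2$), and the same computation yields $\deg\tilde h=\bigl(\sum_{3\le i\le 5}d_i-\sum_{6\le j\le 8}d_j\bigr)+2-(d_1-d_2)$, which your stated inequalities do not bound below by $1$ once $d_1-d_2\ge 2$; so the nonconstancy of $\tilde h$ --- the engine of your contradiction --- is not established there. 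The case does look repairable: when it occurs, $(s_3,s_4,s_5)$ is the primitive left syzygy of the rows of $B_2$ (and $\Rank_Q B_2=2$, else you are done), so its entries divide the corresponding $2\times 2$ minors of $B_2$, giving $d_1-d_j+1\le$ (degree of such a minor); combined with $d_2\ge d_6$ this restores $\deg\tilde h\ge 1$. But some such supplementary argument must be written down; as it stands the proof does not cover this degeneration.
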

\noindent Note that these cases are symmetric, so it is enough to prove the case ${\sigma \leq (18)(23)(47)(56)}$.
\begin{proof}
Suppose not. By Remark \ref{remarkvariable}, we may take $r=2$.
$$\mathbf{D}=\left[ \begin{array}{cccccccc}
                      0 & 0      & p_{13} & p_{14} & p_{15} & p_{16} & p_{17} & p_{18} \\
                      0 & 0      & p_{23} & p_{24} & p_{25} & p_{26} & p_{27} & p_{28} \\
                      0 & 0      & 0      & 0      & 0      & p_{36} & p_{37} & p_{38} \\
                      0 & 0      & 0      & 0      & 0      & p_{46} & p_{47} & p_{48} \\
                      0 & 0      & 0      & 0      & 0      & p_{56} & p_{57} & p_{58} \\
                      0 & 0      & 0      & 0      & 0      & 0      & 0      & 0 \\
                      0 & 0      & 0      & 0      & 0      & 0      & 0      & 0 \\
                      0 & 0      & 0      & 0      & 0      & 0      & 0      & 0      \\
                    \end{array}
 \right].
$$
Define $$ \mathbf{L}:=\left[ \begin{array}{ccc}
p_{13} & p_{14} & p_{15} \\
p_{23} & p_{24} & p_{25}\\
\end{array}
\right] \mbox{ and } \, \mathbf{N}:=\left[ \begin{array}{ccc}
 p_{36} & p_{37} & p_{38} \\
p_{46} & p_{47} & p_{48} \\
p_{56} & p_{57} & p_{58}
\end{array}
\right]=\left[ \begin{array}{ccc}
 A & D & G \\
 B & E & H \\
 C & F & I
\end{array}
\right].
$$
For all $\gamma \in  \mathbb{P}^{1}_k$, if $\Rank (\mathbf{N}(\gamma))\leq 1$, then $\Rank (\mathbf{D}(\gamma))\leq 3$. If $\Rank (\mathbf{N}(\gamma))=3$, then $\mathbf{LN}=0$ implies that $\Rank (\mathbf{L}(\gamma))=0$, which gives us  $\Rank (\mathbf{D}(\gamma))\leq 3$. Thus, for all $\gamma \in  \mathbb{P}^{1}_k$, we have $\Rank (\mathbf{N}(\gamma))=2$. Since $\Rank (\mathbf{N}(\gamma))=2$, $\Rank (\mathbf{L}(\gamma))\leq 1$. Note that $\Rank (\mathbf{L}(\gamma))$ cannot be zero, otherwise $\Rank (\mathbf{D}(\gamma))\leq 3$. Hence for all $\gamma \in  \mathbb{P}^{1}_k$, $\Rank (\mathbf{L}(\gamma))=1$.


Let $\mathbf{U}:=\left[ \begin{array}{ccc}
 1 & 0 & 1 \\
 0 & 1 & -1 \\
 1 & 1 & 0
\end{array}
\right].$ Since $\Rank \mathbf{N}\left([1:0]\right)=2= \Rank \mathbf{U}$, there exist nonsingular matrices $P$ and $Q$ in  $\Mat_{3\times 3}(k)$ such that $P \mathbf{N}\left( [1:0] \right)Q = \mathbf{U}$. Hence, $\WLOG$ we can assume that $\mathbf{N}\left([1:0]\right)=\mathbf{U}$ by replacing $ \mathbf{D}$ with the following matrix
$$ \left(\left[ \begin{array}{ccc}
  I_2 & 0    & 0\\
  0   & P    & 0 \\
  0   & 0    & Q^{-1}
\end{array}
\right]\, \mathbf{D} \, \left[ \begin{array}{ccc}
  I_2 & 0      & 0\\
  0   & P^{-1} & 0 \\
  0   & 0      & Q
\end{array}
\right]\right)$$  if necessary.

 By \cite[Lemma~$3.4$]{Refai1},
we can write
$$\mathbf{L}=\left[ \begin{array}{ccc}
\alpha h_1 & \alpha h_2 & \alpha h_3  \\
\beta h_1 & \beta h_2 & \beta h_3 \\
\end{array}
\right],
$$
where the entries are homogeneous polynomials, $\alpha$ and $\beta$  are relatively prime in $k[x_1,x_2]$. Then
\begin{equation*}
   \mathbf{D} = \begin{tikzpicture}[baseline={(m.center)}]
           \matrix [matrix of math nodes,left delimiter={[},right delimiter={]}] (m)
           {
                      0 & 0      & \alpha h_1 & \alpha h_2 & \alpha h_3 & p_{16} & p_{17} & p_{18} \\
                      0 & 0      & \beta h_1 & \beta h_2 & \beta h_3 & p_{26} & p_{27} & p_{28} \\
                      0 & 0      & 0      & 0      & 0      & A & D & G \\
                      0 & 0      & 0      & 0      & 0      & B & E & H \\
                      0 & 0      & 0      & 0      & 0      & C & F & I \\
                      0 & 0      & 0      & 0      & 0      & 0 & 0      & 0 \\
                      0 & 0      & 0      & 0      & 0      & 0 & 0      & 0 \\
                      0 & 0      & 0      & 0      & 0      & 0 & 0      & 0      \\
                     };
                     \draw(m-1-3.north west) rectangle (m-2-5.south east-|m-1-5.north east);
                   \draw(m-3-6.north west) rectangle (m-5-8.south east-|m-3-8.north east);
           \end{tikzpicture}, \mbox{ and define } \mathbf{M}:=\left[ \begin{array}{cccc}
                      \alpha & p_{16} & p_{17} & p_{18} \\
                      \beta  & p_{26} & p_{27} & p_{28} \\
                       0 & A & D & G \\
                       0 & B & E & H \\
                       0 & C & F & I
                    \end{array}
                    \right].
\end{equation*}
Furthermore, we define the homogeneous polynomials
\begin{align*}
f_1 & :=\alpha p_{26}-\beta p_{16},   & g_1 &  :=DH-EG,   &  g_2  & :=AH-BG,  & g_3   & :=AE-BD,\\
f_2 & := -\alpha p_{27}+\beta p_{17}, & g'_1&  :=DI-FG,   &  g'_2 & :=AI-CG,  & g'_3  & :=AF-CD,\\
f_3 & :=\alpha p_{28}-\beta p_{18},   & g''_1& :=EI-FH,  &  g''_2 & :=BI-CH,  & g''_3 & :=BF-CE.
\end{align*}
If $f_1$, $f_2$, $f_3$ are all zero at $[1:0]$, then the rank of the first two rows of $\mathbf{D}([1:0])$ will be at most $1$. Hence, $\Rank(\mathbf{D}([1:0]))\leq 3$ which is a contradiction. $\WLOG$ we can suppose that $f_1([1:0])$ is non-zero because if necessary, $ \mathbf{D}$ can be replaced by a matrix in the following form
$$ \left(\left[ \begin{array}{ccc}
  I_2 & 0    & 0\\
  0   & I_3    & 0 \\
  0   & 0    & W^{-1}
\end{array}
\right]\, \mathbf{D}([1:0]) \, \left[ \begin{array}{ccc}
  I_2 & 0      & 0\\
  0   & I_3 & 0 \\
  0   & 0      & W
\end{array}
\right]\right),$$ where $W$ is a permutation matrix in $\Mat_{3\times 3}(k)$. Now, notice that $f_1$ is a non-zero polynomial and moreover $g_i$, $g'_i$, and $g''_i$ are all non-zero polynomials for $i\in\{1,2,3\}$ because all $2\times 2$ minors of $\mathbf{U}W$ are non-zero. Then we have
\begin{equation}\label{eqqq20}
m_{1234}^{1234}(\mathbf{M})=f_1g_1+f_2g_2+f_3g_3, \, m_{1234}^{1235}(\mathbf{M})=f_1g'_1+f_2g'_2+f_3g'_3 \mbox{  and } m_{1234}^{1245}(\mathbf{M})=f_1g''_1+f_2g''_2+f_3g''_3.
\end{equation}

 Suppose that there exists $\gamma \in  \mathbb{P}^{1}_k$ such that $(m_{1234}^{1234}(\mathbf{M}))(\gamma)=0$.
 If $g_1(\gamma)\neq 0$ or $g_2(\gamma)\neq 0$ or $g_3(\gamma)\neq 0$, then  $\Rank \left[ \begin{array}{ccc}
 A & D & G  \\
 B & E & H \\
\end{array}
\right](\gamma)=2$. This implies that $\left[ \begin{array}{ccc}
 C & F & I  \\
\end{array}
\right](\gamma)$ is a linear combination of $\left[ \begin{array}{ccc}
 A & D & G  \\
\end{array}
\right](\gamma)$ and $\left[ \begin{array}{ccc}
 B & E & H  \\
\end{array}
\right](\gamma)$. Then the rank of the matrix $\mathbf{D}(\gamma)$ is at most $3$, which is a contradiction. Thus, for all $\gamma$

\begin{equation}\label{eq:idea}
\begin{alignedat}{2}
(m_{1234}^{1234}(\mathbf{M}))(\gamma)& =0 \Rightarrow  g_1(\gamma)=g_2(\gamma)=g_3(\gamma)=0, \\
(m_{1234}^{1235}(\mathbf{M}))(\gamma)& =0 \Rightarrow g'_1(\gamma)=g'_2(\gamma)=g'_3(\gamma)=0, \mbox{ and } \\
(m_{1234}^{1245}(\mathbf{M}))(\gamma)& =0 \Rightarrow  g''_1(\gamma)=g''_2(\gamma)=g''_3(\gamma)=0.
\end{alignedat}
\end{equation}

For $s\neq 0$, $t$ and $u$ in $k$, we can apply the following row and column operations to the matrix $\mathbf{D} \xrightarrow[c_4 \to ({c_4-uc_3})/s]{r_3\to r_3+ u r_4} \; \xrightarrow[c_5 \to c_5-tc_4]{r_4 \to s r_4+tr_5}  \mathbf{\tilde{D}}$, where

\begin{align*}
 \mathbf{\tilde{D}}=\left[ \begin{array}{cccccccc}
    0 & 0      & \alpha h_1 & \alpha \left( \frac{h_2-uh_1}{s}\right)  & \alpha \left(h_3-\frac{t}{s}(h_2-uh_1)\right) & p_{16} & p_{17} & p_{18} \\
    0 & 0      & \beta h_1  & \beta  \left( \frac{h_2-uh_1}{s}\right)  & \beta  \left(h_3-\frac{t}{s}(h_2-uh_1)\right) & p_{26} & p_{27} & p_{28} \\
    0 & 0      & 0          & 0                           & 0                                             & A+uB   & D+uE   & G+uH \\
    0 & 0      & 0          & 0                           & 0                                             & sB+tC  & sE+tF  & sH+tI \\
    0 & 0      & 0          & 0                           & 0                                             & C      & F      & I \\
    0 & 0      & 0          & 0                           & 0                                             & 0      & 0      & 0 \\
    0 & 0      & 0          & 0                           & 0                                             & 0      & 0      & 0 \\
    0 & 0      & 0          & 0                           & 0                                             & 0      & 0      & 0  \\
           \end{array}
 \right].
\end{align*}
 Clearly for all $\gamma$, $\Rank(\mathbf{D}(\gamma))=\Rank(\mathbf{\tilde{D}}(\gamma))$. Define the matrix
$$\mathbf{T}:=\left[ \begin{array}{cccc}
                      \alpha & p_{16} & p_{17} & p_{18} \\
                      \beta  & p_{26} & p_{27} & p_{28} \\
                       0 & A+uB   & D+uE   & G+uH \\
                       0 & sB+tC  & sE+tF  & sH+tI
                    \end{array}
                    \right].
$$
Note that
\begin{align*}
\mathrm{det}(\mathbf{T})& =f_1(sg_1+tg'_1+tug''_1)+f_2(sg_2+tg'_2+tug''_2)+ f_3(sg_3+tg'_3+tug''_3),\\
& =f_1m_{34}^{34}(\mathbf{T})+f_2m_{24}^{34}(\mathbf{T})+f_3m_{23}^{34}(\mathbf{T}).
\end{align*}

$\mathbf{Case (1)}$ There exist $s\neq 0$, $t$, $u$ in $k$ and there exists $\gamma \in  \mathbb{P}^{1}_k$ such that $\mathrm{det}{\mathbf{T}}(\gamma)=0$ and $(m_{34}^{34}(\mathbf{T}))(\gamma)\neq 0$ or $(m_{24}^{34}(\mathbf{T}))(\gamma)\neq 0$ or $(m_{23}^{34}(\mathbf{T}))(\gamma)\neq 0$. Under this assumption, we have
 $$\Rank \left[ \begin{array}{ccc}
 A+uB  &  D+uE  & G+uH  \\
 sB+tC &  sE+tF &sH+tI\\
\end{array}
\right](\gamma)=2.$$  This implies that $\left[ \begin{array}{ccc}
 C & F & I  \\
\end{array}
\right](\gamma)$ is a linear combination of $\left[ \begin{array}{ccc}
 A+uB & D+uE & G+uH  \\
\end{array}
\right](\gamma)$ and $\left[ \begin{array}{ccc}
 sB+tC & sE+tF & sH+tI \\
\end{array}
\right](\gamma)$. As a result, with the assumption of $\mathrm{det}{\mathbf{T}}(\gamma)=0$, the rank of the matrix  $\mathbf{\tilde{D}}(\gamma)$ is at most $3$, which is a contradiction.


$\mathbf{Case (2)}$ For all $s\neq 0$, $t$, $u$ in $k$ and for all $\gamma$ in  $\mathbb{P}^{1}_k$, $\mathrm{det}{\mathbf{T}}(\gamma)=0$ implies that $(m_{34}^{34}(\mathbf{T}))(\gamma)=(m_{24}^{34}(\mathbf{T}))(\gamma)=(m_{23}^{34}(\mathbf{T}))(\gamma)= 0$.

Remember that $(m_{34}^{34}(\mathbf{T}))(\gamma)=(m_{24}^{34}(\mathbf{T}))(\gamma)=(m_{23}^{34}(\mathbf{T}))(\gamma)= 0$ means that
\begin{align*}
(sg_1+tg'_1+tug''_1)(\gamma)=&0,\\
(sg_2+tg'_2+tug''_2)(\gamma)=&0, \mbox{ and }\\
(sg_3+tg'_3+tug''_3)(\gamma)=&0.
\end{align*}
Given any $\gamma \in \mathbb{P}^{1}_k$, define
$$v_1(\gamma):=( g_1(\gamma),g'_1(\gamma), g''_1(\gamma)), \,
v_2(\gamma):=( g_2(\gamma),g'_2(\gamma), g''_2(\gamma)  ), \,
v_3(\gamma):=( g_3(\gamma),g'_3(\gamma), g''_3(\gamma)  ).$$
Now consider $\mathrm{det}(\mathbf{T})$ in terms of minors of $\mathbf{M}$:
$$\mathrm{det}(\mathbf{T})= s(f_1g_1+f_2g_2+f_3g_3)+t(f_1g'_1+f_2g'_2+f_3g'_3)+tu(f_1g''_1+f_2g''_2+f_3g''_3).$$
Hence, $\forall \gamma$, $\mathrm{det}{\mathbf{T}}(\gamma)=0$ implies that
$s(m_{1234}^{1234}(\mathbf{M}))(\gamma)+t(m_{1234}^{1235}(\mathbf{M}))(\gamma)+tu (m_{1234}^{1245}(\mathbf{M}))(\gamma)=0.$\\

\noindent Let $\mathbf{m}(\gamma):=\left[(m_{1234}^{1234}(\mathbf{M}))(\gamma), (m_{1234}^{1235}(\mathbf{M}))(\gamma), (m_{1234}^{1245}(\mathbf{M}))(\gamma) \right]$. By the Statement (\ref{eq:idea}), for all $i\in \{1,2,3\}$, we get
$$
\mathbf{m}(\gamma)\, {\left[ \begin{array}{ccc}
 s & t & tu\\
\end{array}
\right]}^{\mathrm{T}}=0 \, \Rightarrow \, v_i(\gamma) \, {\left[ \begin{array}{ccc}
 s & t & tu\\
\end{array}
\right]}^{\mathrm{T}}=0.
$$
By permuting the roles of $s$, $t$ and $tu$,
$\mathbf{m}(\gamma)\,v^{\mathrm{T}}=0 \Rightarrow v_i(\gamma)\, v^{\mathrm{T}}=0 \mbox{ for all } v \in k^3.$
Hence, the null space of $\mathbf{m}(\gamma)$ is a subspace of the null space of $v_i(\gamma)$ for all $i \in \{1,2,3\}$, so
\begin{equation}\label{eqqq21}
 \mathrm{Null}(\mathbf{m}(\gamma))\subseteq \mathrm{Null} \left({\left[ \begin{array}{ccc}
 v_1(\gamma) & v_2(\gamma) & v_3(\gamma)\\
\end{array}
\right]}^{\mathrm{T}}\right).
\end{equation}
Since $\Rank(\mathbf{m}(\gamma))\leq 1$, we have $\mathrm{nullity}(\mathbf{m}(\gamma))\geq 2$. By Equation \ref{eqqq21},
$$\mathrm{nullity} \left({\left[ \begin{array}{ccc}
 v_1(\gamma) & v_2(\gamma) & v_3(\gamma)\\
\end{array}
\right]}^{\mathrm{T}}\right) \geq \mathrm{nullity}(\mathbf{m}(\gamma))\geq 2.$$
Then for all $\gamma$, $\Rank \left({\left[ \begin{array}{ccc}
 v_1(\gamma) & v_2(\gamma) & v_3(\gamma)\\
\end{array}
\right]}^{\mathrm{T}}\right) \leq 1$, so that each pair of $\{ v_1(\gamma), v_2(\gamma), v_3(\gamma)\}$ is linearly dependent. Since $\mathbf{N}\left([1:0]\right)=\mathbf{U}$, each $g_i$, $g'_i$ and $g''_i$ is non-zero polynomial for all $i\in \{1,2,3\}$.
Thus, as rational functions
\[
 \dfrac{g_i}{g_j}=\dfrac{g'_i}{g'_j}=\dfrac{g''_i}{g''_j} \mbox{, where  } i \neq j.
\]

Let $\lambda$ and $\kappa$ be rational functions such that
\[ \lambda:=\dfrac{g_1}{g_2}=\dfrac{g'_1}{g'_2}=\dfrac{g''_1}{g''_2}, \mbox{ and } \kappa:=\dfrac{g_1}{g_3}=\dfrac{g'_1}{g'_3}=\dfrac{g''_1}{g''_3}.
\]
Then,
\begin{align*}f_1g_1+f_2g_2+f_3g_3=g_1\left( f_1+f_2\frac{1}{\lambda}+f_3\frac{1}{\kappa}\right)=g_2\lambda \left( f_1+f_2\frac{1}{\lambda}+f_3\frac{1}{\kappa}\right)
=g_3\kappa\left( f_1+f_2\frac{1}{\lambda}+f_3\frac{1}{\kappa}\right),\\
f_1g'_1+f_2g'_2+f_3g'_3=g'_1\left( f_1+f_2\frac{1}{\lambda}+f_3\frac{1}{\kappa}\right)=g'_2\lambda \left( f_1+f_2\frac{1}{\lambda}+f_3\frac{1}{\kappa}\right)
=g'_3\kappa\left( f_1+f_2\frac{1}{\lambda}+f_3\frac{1}{\kappa}\right), \\
f_1g''_1+f_2g''_2+f_3g''_3=g''_1\left( f_1+f_2\frac{1}{\lambda}+f_3\frac{1}{\kappa}\right)=g''_2\lambda \left( f_1+f_2\frac{1}{\lambda}+f_3\frac{1}{\kappa}\right)
=g''_3\kappa\left( f_1+f_2\frac{1}{\lambda}+f_3\frac{1}{\kappa}\right).
\end{align*}

Let $\dfrac{w}{z}:=f_1+f_2\dfrac{1}{\lambda}+f_3\dfrac{1}{\kappa}$ such that $\mathrm{gcd}(w,z)=1$.
Since $\degree (w)-\degree (z)=\degree (f_1)$ and $\degree (f_1) > 0$, $w$ is a nonconstant polynomial. In addition, $\dfrac{g_1}{z} w$ is a polynomial. Since $\mathrm{gcd}(w,z)=1$, the ratio $\dfrac{g_1}{z}$ must be a polynomial. Similarly, $\dfrac{g_2}{z}\lambda$ and $\dfrac{g_3}{z}\kappa$ are polynomials too.

By Theorem \ref{thmPOLY}, there exists ${\gamma} \in  \mathbb{P}^{1}_k$ such that the non-zero polynomial
$w({\gamma})=0$. By Equation \ref{eqqq20}, we obtain
$(m_{1234}^{1234}(\mathbf{M}))({\gamma})=(m_{1234}^{1235}(\mathbf{M}))({\gamma})=(m_{1234}^{1245}(\mathbf{M}))({\gamma})=0$.
Since $(m_{1234}^{1345}(\mathbf{M}))({\gamma})$ and $(m_{1234}^{2345}(\mathbf{M}))({\gamma})$ are already zero, all $4\times 4$ minors of $\mathbf{M}$ are zero at ${\gamma}$. Then we have $\Rank(\mathbf{M})(\gamma)\leq 3$, so $\Rank (\mathbf{D}({\gamma}))\leq 3$, which is a contradiction.

\end{proof}

 Theorem \ref{MainTheorem} proves that Conjecture \ref{Berrinconjecture} holds for $N=8$, and hence this proves Conjecture \ref{ENSONBerrinconjecture} for $N=8$.

\appendix
\section{}
 The aim of this section is to show that every element in $\mathbf{RP}(8)$ can be obtained from a unique maximal element by a sequence of
 moves of type $\RN{3}$. When $N=8$, the total number of elements of $\mathbf{RP}(8)$ is $105$ and $14$ of them are maximal. The Hasse diagram of $\mathbf{RP}(8)$ with representative permutations was computed using $\mathrm{GAP}\, 4.8.3$, and is as follows:

\vfill
\

\begin{figure}[H]
    \centering
    \resizebox{1\textwidth}{!}{%
\begin{tikzpicture}[scale=2.5]
  \node (L1-0) at (-6.8,16) {$L1$};
  \node (L1-1) at (-6,16) {$\tiny{\left(\begin{array}{cccc} 1& 3& 5&7 \\2& 4& 6& 8\end{array}\right)}$};
  \node (L1-2) at (-5,16) {$\tiny{\left(\begin{array}{cccc} 1& 3& 5&6 \\2& 4& 8& 7\end{array}\right)}$};
  \node (L1-3) at (-4,16) {$\tiny{\left(\begin{array}{cccc} 1& 3& 4&7 \\2& 6& 5& 8\end{array}\right)}$};
  \node (L1-4) at (-3,16) {$\tiny{\left(\begin{array}{cccc} 1& 3& 4&6 \\2& 8& 5& 7\end{array}\right)}$};
  \node (L1-5) at (-2,16) {$\tiny{\left(\begin{array}{cccc} 1& 3& 4&5 \\2& 8& 7& 6\end{array}\right)}$};
  \node (L1-6) at (-1,16) {$\tiny{\left(\begin{array}{cccc} 1& 2& 5&7 \\4& 3& 6& 8\end{array}\right)}$};
  \node (L1-7) at (0,16)  {$\tiny{\left(\begin{array}{cccc} 1& 2& 5&6 \\4& 3& 8& 7\end{array}\right)}$};
  \node (L1-8) at (1,16)  {$\tiny{\left(\begin{array}{cccc} 1& 2& 4&7 \\6& 3& 5& 8\end{array}\right)}$};
  \node (L1-9) at (2,16)  {$\tiny{\left(\begin{array}{cccc} 1& 2& 3&7 \\6& 5& 4& 8\end{array}\right)}$};
  \node (L1-10) at (3,16) {$\tiny{\left(\begin{array}{cccc} 1& 2& 4&6 \\8& 3& 5& 7\end{array}\right)}$};
  \node (L1-11) at (4,16) {$\tiny{\left(\begin{array}{cccc} 1& 2& 4&5 \\8& 3& 7& 6\end{array}\right)}$};
  \node (L1-12) at (5,16) {$\tiny{\left(\begin{array}{cccc} 1& 2& 3&6 \\8& 5& 4& 7\end{array}\right)}$};
  \node (L1-13) at (6,16) {$\tiny{\left(\begin{array}{cccc} 1& 2& 3&5 \\8& 7& 4& 6\end{array}\right)}$};
  \node (L1-14) at (7,16) {$\tiny{\left(\begin{array}{cccc} 1& 2& 3&4 \\8& 7& 6& 5\end{array}\right)}$};
  \node (L2-0) at (-6.8,13) {$L2$};
  \node (L2-1) at (-6,13) {$1$};
  \node (L2-2) at (-5.5,13) {$2$};
  \node (L2-3) at (-5,13) {$3$};
  \node (L2-4) at (-4.5,13) {$4$};
  \node (L2-5) at (-4,13) {$5$};
  \node (L2-6) at (-3.5,13) {$6$};
  \node (L2-7) at (-3,13) {$7$};
  \node (L2-8) at (-2.5,13) {$8$};
  \node (L2-9) at (-2,13) {$9$};
  \node (L2-10) at (-1.5,13) {$10$};
  \node (L2-11) at (-1,13) {$11$};
  \node (L2-12) at (-0.5,13) {$12$};
  \node (L2-13) at (0,13) {$13$};
  \node (L2-14) at (0.5,13) {$14$};
  \node (L2-15) at (1,13) {$15$};
  \node (L2-16) at (1.5,13) {$16$};
  \node (L2-17) at (2,13) {$17$};
  \node (L2-18) at (2.5,13) {$18$};
  \node (L2-19) at (3,13) {$19$};
  \node (L2-20) at (3.5,13) {$20$};
  \node (L2-21) at (4,13) {$21$};
  \node (L2-22) at (4.5,13) {$22$};
  \node (L2-23) at (5,13) {$23$};
  \node (L2-24) at (5.5,13) {$24$};
  \node (L2-25) at (6,13) {$25$};
  \node (L2-26) at (6.5,13) {$26$};
  \node (L2-27) at (7,13) {$27$};
  \node (L2-28) at (7.5,13) {$28$};
  \node (L3-0) at (-6.8,10) {$L3$};
  \node (L3-1) at (-6,10) {$1$};
  \node (L3-2) at (-5.5,10) {$2$};
  \node (L3-3) at (-5,10) {$3$};
  \node (L3-4) at (-4.5,10) {$4$};
  \node (L3-5) at (-4,10) {$5$};
  \node (L3-6) at (-3.5,10) {$6$};
  \node (L3-7) at (-3,10) {$7$};
  \node (L3-8) at (-2.5,10) {$8$};
  \node (L3-9) at (-2,10) {$9$};
  \node (L3-10) at (-1.5,10) {$10$};
  \node (L3-11) at (-1,10) {$11$};
  \node (L3-12) at (-0.5,10) {$12$};
  \node (L3-13) at (0,10) {$13$};
  \node (L3-14) at (0.5,10) {$14$};
  \node (L3-15) at (1,10) {$15$};
  \node (L3-16) at (1.5,10) {$16$};
  \node (L3-17) at (2,10) {$17$};
  \node (L3-18) at (2.5,10) {$18$};
  \node (L3-19) at (3,10) {$19$};
  \node (L3-20) at (3.5,10) {$20$};
  \node (L3-21) at (4,10) {$21$};
  \node (L3-22) at (4.5,10) {$22$};
  \node (L3-23) at (5,10) {$23$};
  \node (L3-24) at (5.5,10) {$24$};
  \node (L3-25) at (6,10) {$25$};
  \node (L3-26) at (6.5,10) {$26$};
  \node (L3-27) at (7,10) {$27$};
  \node (L3-28) at (7.5,10) {$28$};
  \node (L4-0) at (-6.8,7) {$L4$};
  \node (L4-1) at (-6,7) {$1$};
  \node (L4-2) at (-5.3,7) {$2$};
  \node (L4-3) at (-4.7,7) {$3$};
  \node (L4-4) at (-4,7) {$4$};
  \node (L4-5) at (-3.3,7) {$5$};
  \node (L4-6) at (-2.7,7) {$6$};
  \node (L4-7) at (-2,7) {$7$};
  \node (L4-8) at (-1.3,7) {$8$};
  \node (L4-9) at (-0.7,7) {$9$};
  \node (L4-10) at (0,7) {$10$};
  \node (L4-11) at (0.7,7) {$11$};
  \node (L4-12) at (1.4,7) {$12$};
  \node (L4-13) at (2.1,7) {$13$};
  \node (L4-14) at (2.8,7) {$14$};
  \node (L4-15) at (3.5,7) {$15$};
  \node (L4-16) at (4.2,7) {$16$};
  \node (L4-17) at (4.9,7) {$17$};
  \node (L4-18) at (5.6,7) {$18$};
  \node (L4-19) at (6.3,7) {$19$};
  \node (L4-20) at (7,7) {$20$};
  \node (L5-0) at (-6.8,4) {$L5$};
  \node (L5-1) at (-6,4)   {$\tiny{1=\left(\begin{array}{cccc} 1 & 2 & 4 &5 \\3 & 6 & 7 & 8\end{array}\right)}$};
  \node (L5-2) at (-4.5,4) {$\tiny{2=\left(\begin{array}{cccc} 1 & 2 & 3 &5 \\6 & 4 & 7 & 8\end{array}\right)}$};
  \node (L5-3) at (-3,4)   {$\tiny{3=\left(\begin{array}{cccc} 1 & 2 & 3 &6 \\4 & 5 & 7 & 8\end{array}\right)}$};
  \node (L5-4) at (-1.5,4) {$\tiny{4=\left(\begin{array}{cccc} 1 & 2 & 3 &5 \\4 & 7 & 6 & 8\end{array}\right)}$};
  \node (L5-5) at (0,4)    {$\tiny{5=\left(\begin{array}{cccc} 1 & 2 & 3 &5 \\4 & 6 & 8 & 7\end{array}\right)}$};
  \node (L5-6) at (1.5,4)  {$\tiny{6=\left(\begin{array}{cccc} 1 & 2 & 3 &4 \\6 & 7 & 5 & 8\end{array}\right)}$};
  \node (L5-7) at (3,4)    {$\tiny{7=\left(\begin{array}{cccc} 1 & 2 & 3 &4 \\7 & 5 & 6 & 8\end{array}\right)}$};
  \node (L5-8) at (4.5,4)  {$\tiny{8=\left(\begin{array}{cccc} 1 & 2 & 3 &4 \\6 & 5 & 8 & 7\end{array}\right)}$};
  \node (L5-9) at (6,4)    {$\tiny{9=\left(\begin{array}{cccc} 1 & 2 & 3 &4 \\5 & 7 & 8 & 6\end{array}\right)}$};
  \node (L5-10) at (7.5,4) {$\tiny{10=\left(\begin{array}{cccc} 1 & 2 & 3 &4 \\5 & 8 & 6 & 7\end{array}\right)}$};
  \node (L6-0) at (-6.8,1.5) {$L6$};
  \node (L6-1) at (-4,1.5)   {$\tiny{1=\left(\begin{array}{cccc} 1 & 2 & 3 &4 \\4 & 6 & 7 & 8\end{array}\right)}$};
  \node (L6-2) at (-1,1.5)   {$\tiny{2=\left(\begin{array}{cccc} 1 & 2 & 3 &4 \\6 & 5 & 7 & 8\end{array}\right)}$};
  \node (L6-3) at (2,1.5)    {$\tiny{3=\left(\begin{array}{cccc} 1 & 2 & 3 &4 \\5 & 7 & 6 & 8\end{array}\right)}$};
  \node (L6-4) at (5,1.5)    {$\tiny{4=\left(\begin{array}{cccc} 1 & 2 & 3 &4 \\5 & 6 & 8 & 7\end{array}\right)}$};
  \node (L7-0) at (-6.8,-0.7) {$L7$};
  \node (L7-one) at (0,-0.7) {$\tiny{1=\left(\begin{array}{cccc} 1 & 2 & 3 &4 \\5 & 6 & 7 &8 \end{array}\right)}$};
  \draw (L7-one) -- (L6-2);
  \draw (L7-one) -- (L6-3);
  \draw (L7-one) -- (L6-4);
  \draw (L6-1) -- (L5-2);\draw (L6-1) -- (L5-4); \draw (L6-1) -- (L5-5);\draw (L6-2) -- (L5-6);\draw (L6-2) -- (L5-8);
  \draw (L6-3) -- (L5-7);\draw (L6-3) -- (L5-9);
  \draw (L6-4) -- (L5-8);\draw (L6-4) -- (L5-10);
  \draw (L5-1) -- (L4-3);\draw (L5-1) -- (L4-5); \draw (L5-1) -- (L4-6);
  \draw (L5-2) -- (L4-7);
  \draw (L5-3) -- (L4-8); \draw (L5-3) -- (L4-11);\draw (L5-3) -- (L4-12);
  \draw (L5-4) -- (L4-7); \draw (L5-4) -- (L4-9);
  \draw (L5-5) -- (L4-10); \draw (L5-5) -- (L4-13);\draw (L5-5) -- (L4-14);
  \draw (L5-6) -- (L4-15); \draw (L5-6) -- (L4-17);
  \draw (L5-7) -- (L4-16);
  \draw (L5-8) -- (L4-15); \draw (L5-8) -- (L4-17);\draw (L5-8) -- (L4-20);
  \draw (L5-9) -- (L4-18); \draw (L5-9) -- (L4-19);
  \draw (L5-10) -- (L4-19); \draw (L5-10) -- (L4-20);
  \draw (L4-1) -- (L3-2); \draw (L4-1) -- (L3-2);
  \draw (L4-2) -- (L3-6); \draw (L4-2) -- (L3-7);
  \draw (L4-3) -- (L3-9); \draw (L4-3) -- (L3-10);
  \draw (L4-4) -- (L3-8); \draw (L4-4) -- (L3-11);\draw (L4-4) -- (L3-12);
  \draw (L4-5) -- (L3-9); \draw (L4-5) -- (L3-13);
  \draw (L4-6) -- (L3-13); \draw (L4-6) -- (L3-14);
  \draw (L4-7) -- (L3-16);
  \draw (L4-8) -- (L3-15);
  \draw (L4-9) -- (L3-16);
  \draw (L4-10) -- (L3-17);
  \draw (L4-11) -- (L3-18);\draw (L4-11) -- (L3-21);
  \draw (L4-12) -- (L3-19); \draw (L4-12) -- (L3-21);
  \draw (L4-13) -- (L3-20);  \draw (L4-13) -- (L3-22);
  \draw (L4-14) -- (L3-22); \draw (L4-14) -- (L3-26);
  \draw (L4-15) -- (L3-23); \draw (L4-15) -- (L3-24);
  \draw (L4-16) -- (L3-23);
  \draw (L4-17) -- (L3-24); \draw (L4-17) -- (L3-25);  \draw (L4-17) -- (L3-27);
  \draw (L4-18) -- (L3-23); \draw (L4-18) -- (L3-25);\draw (L4-18) -- (L3-28);
  \draw (L4-19) -- (L3-25);
  \draw (L4-20) -- (L3-27); \draw (L4-20) -- (L3-28);
  \draw (L3-1) -- (L2-2); \draw (L3-1) -- (L2-4);
  \draw (L3-2) -- (L2-5);
  \draw (L3-3) -- (L2-6);
  \draw (L3-4) -- (L2-7); \draw (L3-4) -- (L2-10);
  \draw (L3-5) -- (L2-8); \draw (L3-5) -- (L2-11);
  \draw (L3-6) -- (L2-12);
  \draw (L3-7) -- (L2-12); \draw (L3-7) -- (L2-13);
  \draw (L3-8) -- (L2-14);
  \draw (L3-9) -- (L2-16);
  \draw (L3-10) -- (L2-16);
  \draw (L3-11) -- (L2-17);
  \draw (L3-12) -- (L2-15); \draw (L3-12) -- (L2-17);
  \draw (L3-13) -- (L2-18);
  \draw (L3-14) -- (L2-23);
  \draw (L3-15) -- (L2-19); \draw (L3-15) -- (L2-20);
  \draw (L3-16) -- (L2-21); \draw (L3-16) -- (L2-24);
  \draw (L3-17) -- (L2-21);
  \draw (L3-18) -- (L2-19); \draw (L3-18) -- (L2-25);
  \draw (L3-19) -- (L2-20);
  \draw (L3-20) -- (L2-21);
  \draw (L3-21) -- (L2-25);
  \draw (L3-22) -- (L2-26);
  \draw (L3-23) -- (L2-27);
  \draw (L3-24) -- (L2-27);
  \draw (L3-25) -- (L2-22);
  \draw (L3-26) -- (L2-26);
  \draw (L3-27) -- (L2-27); \draw (L3-27) -- (L2-28);
  \draw (L3-28) -- (L2-28);
  \draw (L2-1) -- (L1-2); \draw (L2-2) -- (L1-4);
  \draw (L2-3) -- (L1-3); \draw (L2-4) -- (L1-4);
  \draw (L2-5) -- (L1-5); \draw (L2-6) -- (L1-5);
  \draw (L2-7) -- (L1-7); \draw (L2-8) -- (L1-8);
  \draw (L2-9) -- (L1-6);\draw (L2-10) -- (L1-7);
  \draw (L2-11) -- (L1-8);\draw (L2-12) -- (L1-9);
  \draw (L2-13) -- (L1-9);
  \draw (L2-14) -- (L1-10);
  \draw (L2-15) -- (L1-10);
  \draw (L2-16) -- (L1-11);
  \draw (L2-17) -- (L1-10);
  \draw (L2-18) -- (L1-11);
  \draw (L2-19) -- (L1-12);
  \draw (L2-20) -- (L1-12);
  \draw (L2-21) -- (L1-13);
  \draw (L2-22) -- (L1-14);
  \draw (L2-23) -- (L1-11);
  \draw (L2-24) -- (L1-13);
  \draw (L2-25) -- (L1-12);
  \draw (L2-26) -- (L1-13);
  \draw (L2-27) -- (L1-14);
  \draw (L2-28) -- (L1-14);
  \end{tikzpicture}
  }%
    \caption{The Hasse diagram of $\mathbf{RP}(8)$}
    \label{fig:1}
\end{figure}

\begin{center}
{Here we have abbreviated the entries according to the following table:}
\begin{tabular}{ | m{1cm} | m{12cm} | }
\hline
Level& Involutions \\
\hline
$1$ & $1=(12)(34)(56)(78) , 2=(12)(34)(58)(67), 3=(12)(36)(45)(78)$,
$ 4=(12)(38)(45)(67),  5=(12)(38)(47)(56) , 6=(14)(23)(56)(78)$,
$ 7=(14)(23)(58)(67) , 8=(16)(23)(45)(78) , 9=(16)(25)(34)(78)$,
$ {10=(18)(23)(45)(67) ,\;  11=(18)(23)(47)(56) ,\;  12=(18)(25)(34)(67)}$,
$ 13=(18)(27)(34)(56) , \; 14=(18)(27)(36)(45)$\\
\hline
2 & ${1=(12)(34)(57)(68) , 2=(12)(37)(45)(68) , 3=(12)(35)(46)(78)}$,
$ 4=(12)(35)(48)(67), 5=(12)(37)(48)(56), 6=(12)(38)(46)(57)$,
$ 7=(14)(23)(57)(68) , 8=(15)(23)(46)(78), 9=(13)(24)(56)(78)$,
$ {10=(13)(24)(58)(67) , 11=(13)(26)(45)(78), 12=(15)(26)(34)(78)}$,
$ {13=(16)(24)(35)(78) , 14=(17)(23)(45)(68), 15=(15)(23)(48)(67)}$,
$ {16=(17)(23)(48)(56), 17=(13)(28)(45)(67), 18=(13)(28)(47)(56)}$,
$ {19=(17)(25)(34)(68), 20=(15)(28)(34)(67), 21=(17)(28)(34)(56)}$,
$ {22=(17)(28)(36)(45) , 23=(18)(23)(46)(57), 24=(18)(26)(34)(57)}$,
$ {25=(18)(24)(35)(67) , 26=(18)(24)(37)(56), 27=(18)(26)(37)(45)}$,
$ {28=(18)(27)(35)(46)}$ \\
\hline
$3$&${1=(12)(35)(47)(68), 2=(12)(37)(46)(58), 3=(12)(36)(48)(57)}$,
$ 4=(13)(24)(57)(68), 5=(13)(25)(46)(78), 6=(15)(24)(36)(78)$,
$ 7=(14)(26)(35)(78), 8=(15)(23)(47)(68), 9=(17)(23)(46)(58)$,
$ {10=(16)(23)(48)(57), 11=(13)(27)(45)(68), 12=(13)(25)(48)(67)}$,
$ {13=(13)(27)(48)(56), 14=(13)(28)(46)(57), 15=(15)(27)(34)(68)}$,
$ { 16=(17)(26)(34)(58), 17=(16)(28)(34)(57), 18=(17)(24)(35)(68)}$,
$ {19=(15)(24)(38)(67), 20=(17)(24)(38)(56), 21=(14)(28)(35)(67)}$,
$ { 22=(14)(28)(37)(56), 23=(17)(26)(38)(45), 24=(16)(28)(37)(45)}$,
$ { 25=(17)(28)(35)(46), 26=(18)(24)(36)(57), 27=(18)(26)(35)(47)}$,
$ {28=(18)(25)(37)(46)}$\\
\hline
$4$& ${1=(12)(36)(47)(58), 2=(14)(25)(36)(78), 3=(16)(23)(47)(58)}$,
$ { 4=(13)(25)(47)(68), 5=(13)(27)(46)(58), 6=(13)(26)(48)(57)}$,
$ {7=(16)(27)(34)(58), 8=(15)(24)(37)(68), 9=(17)(24)(36)(58)}$,
$ {10=(16)(24)(38)(57), 11=(14)(27)(35)(68),  12=(14)(25)(38)(67)}$,
$ { 13=(14)(27)(38)(56), 14=(14)(28)(36)(57), 15=(16)(27)(38)(45)}$,
$ { 16=(17)(26)(35)(48), 17=(16)(28)(35)(47), 18=(17)(25)(38)(46)}$,
$ { 19=(15)(28)(37)(46), 20=(18)(25)(36)(47)}$\\
\hline
$5$&${1=(13)(26)(47)(58), 2=(16)(24)(37)(58), 3=(14)(25)(37)(68)}$,
$ { 4=(14)(27)(36)(58), 5=(14)(26)(38)(57), 6=(16)(27)(35)(48)}$,
$ { 7=(17)(25)(36)(48), 8=(16)(25)(38)(47), 9=(15)(27)(38)(46)}$,
$ { 10=(15)(28)(36)(47)}$\\
\hline
$6$& ${1=(14)(26)(37)(58), 2=(16)(25)(37)(48), 3=(15)(27)(36)(48)}$,
${ 4=(15)(26)(38)(47)}$\\
\hline
$7$&$1=(15)(26)(37)(48)$\\
\hline
\end{tabular}
\end{center}
\section{Acknowledgments}
The author was supported by the Fellowship Program for Abroad Studies 2219 by
the Scientific and Technological Research Council of Turkey (T\"{U}B\.{I}TAK). The author is grateful to the Institut f\"{u}r Mathematik at the Universit\"{a}t Augsburg and \mbox{Bernhard Hanke} for their hospitality, and would like to thank Daniel Erman for mentioning that there is a counterexample when $N=12$ and \"{O}zg\"{u}n \"{U}nl\"{u} for many helpful discussions on this paper.


\end{document}